\definecolor{marin}{rgb}   {0.,   0.3,   0.7} 
\definecolor{rouge}{rgb}   {0.8,   0.,   0.} 
\definecolor{sepia}{rgb}   {0.8,   0.5,   0.} 
\newtheorem{lemma}{Lemma}[section]
\newtheorem{theorem}[lemma]{Theorem}
\newtheorem{proposition}[lemma]{Proposition}
\newtheorem{corollary}[lemma]{Corollary}
\theoremstyle{definition}
\newtheorem{definition}[lemma]{Definition}
\newtheorem{remark}[lemma]{Remark}
\newcommand*{\transp}[2][-3mu]{\ensuremath{\mskip1mu\prescript{\smash{\mathrm t\mkern#1}}{}{\mathstrut#2}}}
\begin{document}

\title[Exact splitting for linear partial differential equations]{Exact splitting methods for semigroups generated by inhomogeneous quadratic differential operators}

\begin{abstract}
We introduce some general tools to design exact splitting methods to compute numerically semigroups generated by inhomogeneous quadratic differential operators. More precisely, we factorize these semigroups as products of semigroups that can be approximated efficiently, using, for example, pseudo-spectral methods. We highlight the efficiency of these new methods on the examples of the magnetic linear Schr\"odinger equations with quadratic potentials, some transport equations and some Fokker-Planck equations.
\end{abstract}

\author{Joackim Bernier}
\address{Institut  de  Math\'ematiques  de  Toulouse  ;  UMR5219,  Universit\'e  de  Toulouse  ;  CNRS,  Universit\'e  Paul Sabatier, F-31062 Toulouse Cedex 9, France} 
\email{joackim.bernier@univ-nantes.fr}

\maketitle

\setcounter{tocdepth}{1} 
\tableofcontents

\section{Introduction}

\subsection{Motivation}
Before presenting the general setting of this paper, let us motivate our splitting methods on a quick example. Assume that we aim at solving numerically the following evolution equation on $\mathbb{R}^{n}$
\begin{equation*}
\left\{ \begin{array}{cccl} \partial_t u(t,x) & =& - (|x|^2 - \Delta)\, u(t,x), & t\geq 0,\ x\in \mathbb{R}^n \\ 
				u(0,x) &=& u_0(x), & x\in \mathbb{R}^n,
\end{array} \right.
\end{equation*}
where $n\geq 1$ and $u_0$ is a smooth and well localized function on $\mathbb{R}^n$. Since the Hermite functions diagonalize the harmonic oscillator, a first natural approach, to reach spectral accuracy, consists in using a Fast Hermite Transform. However, if the solution has to be known in the space variables (for example on a grid) at each time step, such a method is quite costly. 

A cheaper classical method consists in splitting the harmonic oscillator. For example, here, a natural splitting would be the following Strang splitting
\begin{equation}
\label{strang_intro}
e^{-\delta_t (|x|^2 - \Delta)} \simeq e^{-\delta_t |x|^2/2}e^{\delta_t \Delta}e^{-\delta_t |x|^2/2}.
\end{equation}
At each time step, this method requires only a Fast Fourier Transform to solve the heat equation. Unfortunately, even if it reaches spectral accuracy in space, it is only second order in time. Furthermore, note that, here,  the only way to get classical splitting methods of higher order is to use complex time steps (see e.g. \cite{BCCM,CCDV}) and that the higher the order of the method is, the larger the number of FFTs per time-step is (and so the more costly the method is).

However, allowing the sub-time-steps of the splitting method to be nonlinear functions of $\delta_t$, the following splitting formula can be established (see subsection 7.4 of \cite{AB} for a proof)
\begin{equation}
\label{split_exact_harm_osci}
 e^{-\delta_t (|x|^2-\Delta)}= e^{-\frac12 \tanh(\delta_t)  |x|^2}e^{\frac12 \sinh(2\delta_t)\Delta} e^{-\frac12 \tanh(\delta_t) |x|^2}.
\end{equation}
Note that, contrary to the Strang splitting \eqref{strang_intro}, this factorization is \emph{exact}: there is no remainder term. Consequently, it is much more accurate than \eqref{strang_intro} and the time step can be taken quite large (the only possible restrictions coming from the spatial discretization). Furthermore, it is as cheap as \eqref{strang_intro} to compute.

Roughly speaking, in this paper, we explain how and why the evolution equations associated with a large class of operators called \emph{inhomogeneous quadratic differential operators}, can be solved by some exact splitting methods. These exact splittings are similar to \eqref{split_exact_harm_osci}: they provide numerical methods of infinite order (spectral in space and exact in time) using only a small number of FFTs. Sometimes the coefficients of these splittings are not given by an explicit formula depending on $\delta_t$ (like in \eqref{split_exact_harm_osci}), nevertheless, in any case, we provide an iterative method to compute them efficiently. 

We warn the reader that the example of the exact splitting \eqref{split_exact_harm_osci} may be somehow misleading about our methods. They are not just a generalization of the classical splitting where the sub-time-steps would become nonlinear functions of $\delta_t$ (as in \eqref{split_exact_harm_osci}). Most of the time, the exact splitting requires the computation of solutions of evolution equations we would not compute in a classical splitting method. For example, to solve the Fokker--Planck equation

\begin{equation*}
\left\{ \begin{array}{cccl} \partial_t u(t,x,v) + v\partial_x u(t,x,v) &=& \partial_v (v+ \partial_v) u(t,x,v), &  x,v\in \mathbb{R}, \ t\geq 0,  \\ 
				u(0,x,v) &=& u_0(x,v), & x,v\in \mathbb{R},
\end{array} \right.
\end{equation*}
with an exact splitting, we have to solve some Schr\"odinger equations (see Proposition \ref{prop_NFP} for details).

\subsection{General context and result}
We consider the problem of the numerical resolution by splitting methods of linear partial differential equations of the form
\begin{equation}
\label{linear_pde}
\left\{ \begin{array}{cccl} \partial_t u(t,x) & =& - p^w u(t,x), & t\geq 0,\ x\in \mathbb{R}^n \\ 
				u(0,x) &=& u_0(x), & x\in \mathbb{R}^n
\end{array} \right.
\end{equation}
where $n\geq 1$, $u_0\in L^2(\mathbb{R}^n)$ and $p^w$ is an inhomogeneous quadratic differential operator acting on $L^2(\mathbb{R}^n)$. 

First, let us precise what we hear by \emph{inhomogeneous quadratic differential operator}. This terminology is used (for example by H\"ormander in \cite{Hor95}) to denote the Weyl quantization of a polynomial function of degree $2$ or less. More precisely, if $p$ is a polynomial function on $\mathbb{C}^{2n}$ of degree $2$ or less (called \emph{symbol}) whose decomposition in coordinates is given by
$$
p(X) = \transp{X} Q X + \transp{Y} X + c
$$
where $X=\transp{(x_1,\dots,x_n,\xi_1,\dots,\xi_n)}$, $Q$ is a symmetric matrix of size $2n$ with complex coefficients, $Y\in \mathbb{C}^{2n}$ is a vector and $c\in \mathbb{C}$ is a constant, the Weyl quantization of $p$ is the operator acting on $L^2(\mathbb{R}^n)$ defined by
$$
p^w = \transp{\begin{pmatrix} x \\ -i\nabla \end{pmatrix}}Q\begin{pmatrix} x \\ -i\nabla \end{pmatrix} + \transp{Y} \begin{pmatrix} x \\ -i\nabla \end{pmatrix} + c.
$$
Note that, for such polynomials, this formula coincides with the usual (and much more general) formula defining the Weyl quantization through oscillatory integrals (see e.g., Section 18.5 in \cite{Lars_book} or Chapter 1 in \cite{Rodino_book}).

The Weyl quantization allows in many situations the deduction of some properties of the operator from properties of its symbol. For example, as stated in the following proposition, if the real part of $p$ is bounded below on $\mathbb{R}^{2n}$ then \eqref{linear_pde} is globally well posed. 
\begin{proposition}
\label{prop_semigroup_exists} If $p$ is a polynomial of degree $2$ or less on $\mathbb{C}^{2n}$ whose real part is bounded below on $\mathbb{R}^{2n}$ then $-p^w$ generates a strongly continuous semigroup on $L^2(\mathbb{R}^n)$ denoted by $(e^{-t p^w})_{t\in \mathbb{R}_+}.$
\end{proposition}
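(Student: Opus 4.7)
The plan is to invoke the Lumer--Phillips theorem, which requires quasi-dissipativity of $-p^w$ together with a range condition. Write $p = a + ib$ with $a,b$ real polynomials of degree $\leq 2$; the hypothesis says $a \geq -C$ on $\mathbb{R}^{2n}$, and since a lower-bounded polynomial of degree $\leq 2$ automatically has positive-semidefinite quadratic part, the translation $p \mapsto p + C$ (which simply multiplies the candidate semigroup by $e^{-Ct}$) reduces the problem to the case $a \geq 0$. As domain for $p^w$ I would fix the Shubin--Sobolev space $B^2(\mathbb{R}^n) = \{u \in L^2 : x^\alpha \partial^\beta u \in L^2 \text{ for } |\alpha|+|\beta|\leq 2\}$, on which $p^w$ is continuous into $L^2$ and which admits $\mathcal{S}(\mathbb{R}^n)$ as a core.

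On $\mathcal{S}(\mathbb{R}^n)$, a direct integration-by-parts in the Weyl formula yields $(p^w)^* = \bar{p}^w$, whence the identity
\begin{equation*}
\mathrm{Re}\,\langle p^w u, u\rangle_{L^2} = \langle a^w u, u\rangle_{L^2}.
\end{equation*}
The crux is then to establish a lower bound $a^w \geq -\omega_0 I$ on $\mathcal{S}$ for some $\omega_0 \in \mathbb{R}$. Let $V \subset \mathbb{R}^{2n}$ denote the kernel of the semidefinite quadratic part of $a$; since $a \geq 0$ globally, the linear part of $a$ must vanish on $V$, so $a$ is constant along $V$. On a linear complement, a real symplectic change of coordinates brings the (now positive-definite) quadratic part of $a$ into a direct sum of harmonic-oscillator blocks $\lambda_j(x_j^2+\xi_j^2)$, and intertwining $a^w$ by the associated metaplectic unitary shows that $a^w$ is unitarily equivalent to a finite sum of shifted harmonic oscillators plus a constant, hence bounded below. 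This gives the quasi-dissipativity of $-p^w - \omega_0 I$.

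For the range condition I need $\lambda I + p^w : B^2 \to L^2$ to be surjective for some $\lambda > \omega_0$. Quasi-accretivity gives $\|(\lambda I + p^w)u\| \geq (\lambda-\omega_0)\|u\|$, so the range is closed. Running the same lower-bound argument for $\bar p$---which satisfies the same hypothesis since $\mathrm{Re}\,\bar p = a$---shows that the adjoint $\bar\lambda I + \bar p^w$ is injective for $\lambda > \omega_0$, whence the range of $\lambda I + p^w$ is also dense, and therefore equal to all of $L^2$. Lumer--Phillips then delivers the strongly continuous quasi-contractive semigroup, and undoing the initial translation yields the $C_0$-semigroup $(e^{-t p^w})_{t\geq 0}$. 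The principal obstacle is the lower bound on $a^w$: this is the one step that requires genuine features of Weyl calculus (the metaplectic intertwining together with the symplectic normal form of a non-negative quadratic form), whereas everything else is routine semigroup theory.
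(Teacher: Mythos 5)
Your strategy -- Lumer--Phillips after reducing to $\Re p \geq 0$, with the crux being a lower bound on $(\Re p)^w$ -- is the same Hille--Yosida style argument the paper defers to in H\"ormander \cite{Hor95} (pp.\ 425--426), and most of it is sound: the shift $p\mapsto p+C$, the identity $(p^w)^*=\bar p^w$ on $\mathcal S$, the observation that $\Re\langle p^w u,u\rangle=\langle a^w u,u\rangle$, and the closed-plus-dense range argument via injectivity of the adjoint all hold up.

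The one step I would flag is the normal form for the non-negative polynomial $a$. You argue that $a$ is constant on the kernel $V$ of its quadratic part and that, on a linear complement of $V$, a real symplectic change of variables reduces the quadratic part to a direct sum of harmonic-oscillator blocks $\lambda_j(x_j^2+\xi_j^2)$. This is not quite right: a complement of $V$ has no reason to be a \emph{symplectic} subspace (it may even be odd-dimensional), so there is no symplectic map of that complement into itself diagonalising the restricted form. The correct statement is Williamson's theorem in the semidefinite case (Theorem~21.5.3 of \cite{Lars_book}, which the paper itself invokes in Lemma~\ref{lem_pos_quad_comp}): a non-negative quadratic form on $\mathbb R^{2n}$ can be brought by some $T\in\mathrm{Sp}_{2n}(\mathbb R)$ to $\sum_{j\leq m}\lambda_j(x_j^2+\xi_j^2)+\sum_{m<j\leq m+k} x_j^2$, and the extra \emph{singular} blocks $x_j^2$ cannot be avoided in general. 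The conclusion you want still follows immediately (each of $x_j^2-\partial_{x_j}^2$ and $x_j^2$ is non-negative, so the transformed operator, hence $a^w$, is bounded below after conjugation by the metaplectic unitary and the Heisenberg translation eliminating the linear part), but you should route the argument through the genuine normal form rather than the positive-definite-on-a-complement shortcut.
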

This proposition is very classical and relies on the Hille-Yosida Theorem. For example, a proof is given by H\"ormander in \cite{Hor95} (pages 425-426) when $p$ is quadratic, but its proof can clearly be extended to the case where $p$ is inhomogeneous (see e.g. Theorem 4.7 in \cite{Hor95}).

We recognize that the class of equations given by \eqref{linear_pde}, where $p$ is a polynomial of degree $2$ or less, may seem too elementary to require the use of specific methods to solve them. However, it is usual to have to solve them as sub-steps of splitting methods for more sophisticated equations. Thus it is crucial to have robust methods to compute them very efficiently. Furthermore, for many of these models, in some relevant regimes, their dynamics are the leading part of the dynamics. So, it is crucial to be able to compute them with as much accuracy as possible. 

For example, the linear part of some nonlinear Schr\"odinger equations describing some rotating Bose--Einstein condensates (see e.g. \cite{Bader,Bao}) is of the form \eqref{linear_pde} where
$$
p^w = ( -i |\xi|^2 - i V(x) - i Bx \cdot \xi    )^w = i \Delta  -  B x \cdot \nabla  - i V(x).
$$
where $V:\mathbb{R}^n\to \mathbb{R}$ is a quadratic external potential and $B$ is a real skew symmetric matrix of size $n$ associated with a constant external magnetic field. The formalism of \eqref{linear_pde} allows also to consider transport equations associated with affine vectors fields. Indeed, if $B$ is a square real matrix of size $n$ and $y\in \mathbb{R}^n$ then
$$
(iB x\cdot \xi +i y\cdot \xi + \mathrm{Tr}B)^w=(B x + y)\cdot \nabla.
$$
Even if such transport equations are essentially trivial, their resolution is required to compute, using splitting methods, the solutions of many kinetic equations (e.g. the Vlasov-Maxwell equations with a constant external magnetic field, see \cite{JCC,cef}).
Finally, equations like \eqref{linear_pde} can also describe some phenomena of diffusions. For example, the generalized Ornstein--Uhlenbeck operators are of the form
$$( \xi \cdot A \xi + iBx\cdot \xi +   x\cdot Rx +  \mathrm{Tr}B )^w = -\nabla\cdot A\nabla + Bx \cdot \nabla + x\cdot Rx  $$
where $A,R$ are some real nonnegative symmetric matrices of size $n$ and $B$ is a real matrix $n$. Note that these operators include Fokker--Planck and Kramers--Fokker--Planck operators (see e.g. \cite{DHL,HSH}).

Some of the equations of the form \eqref{linear_pde} are easy to solve numerically using pseudo-spectral methods. For example, to solve the heat equation or to compute a shear, it is enough to do some Fast Fourier Transforms. Similarly, in the spirit of the splitting methods, it is not very costly to solve successively some of these equations. More precisely, let us define, in this context, the operators we consider as easily computable using standard pseudo-spectral methods.
\begin{definition}
\label{def_comput} An operator acting on $L^2(\mathbb{R}^n)$ can be \emph{computed by an exact splitting} if it can be factorized as a product of operators of the form 
$$e^{ \alpha \partial_{x_j}},\ e^{ i \alpha x_j },\ e^{ i a(\nabla)},\ e^{ i a(x) },\ e^{ \alpha x_k  \partial_{x_j} },\ e^{ -b(x)},\ e^{ b(\nabla)},\ e^{\gamma}$$ where $\alpha\in \mathbb{R},\gamma\in \mathbb{C},a,b : \mathbb{R}^n \to \mathbb{R}$ are some real quadratic forms, $b$ is nonnegative, $j,k\in \llbracket 1,n \rrbracket$ and $k\neq j$. As usual, $a(\nabla) $ (resp. $b(\nabla)$) denotes the Fourier multiplier associated with $-a(\xi)$ (resp. $-b(\xi)$), i.e. $a(\nabla) = (-a(\xi))^w$.
\end{definition}
Note that if an operator can be computed by an exact splitting then it is bounded. The following theorem justifies why we focus on splitting methods for semigroups generated by inhomogeneous quadratic differential operators.

\begin{theorem}
\label{thm_universal}
 If $p$ is a polynomial of degree $2$ or less on $\mathbb{C}^{2n}$ whose real part is bounded below on $\mathbb{R}^{2n}$ then $e^{-p^w}$ can be computed by an exact splitting.
\end{theorem}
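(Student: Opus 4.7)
The strategy is to reduce $e^{-p^w}$, in two stages, to the elementary factors of Definition \ref{def_comput}.

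\emph{Stage 1 (inhomogeneous part).} Write $p = p_2 + p_1 + c$ with $p_2$ homogeneous quadratic, $p_1$ linear and $c\in\mathbb{C}$. Since the commutator of $p_2^w$ with any first-order operator is again first-order, the adjoint action of $p_2^w$ preserves the finite-dimensional Heisenberg algebra spanned by $1, x_1,\dots,x_n, -i\partial_{x_1},\dots,-i\partial_{x_n}$. Solving a linear ODE in this algebra yields an explicit identity
\[
e^{-p^w} \;=\; e^{-p_2^w}\cdot R,
\]
where $R$ belongs to the complexified Heisenberg group. A generic element of this group factors as a product of real translations $e^{\alpha\partial_{x_j}}$, real modulations $e^{i\alpha x_j}$, a scalar factor $e^\gamma$, and -- to absorb the imaginary parts of complex coefficients -- a Gaussian multiplier $e^{-b(x)}$ together with a Gaussian Fourier multiplier $e^{b(\nabla)}$ (the required nonnegativity of $b$ being consistent with $\operatorname{Re} p$ bounded below). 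This reduces matters to the purely quadratic case $p=p_2$ with $\operatorname{Re} p_2\geq 0$ on $\mathbb{R}^{2n}$.

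\emph{Stage 2 (quadratic part).} I would exploit the correspondence between quadratic Weyl symbols and the complex symplectic Lie algebra: to $p_2(X)=\transp{X}QX$ associate the Hamilton matrix $M=-2JQ$ with $J$ the standard symplectic form, so that $e^{-p_2^w}$ is, up to a metaplectic factor, a quantization of $e^{M}$. The goal is then to factor $e^{M}$ via a Bruhat/Iwasawa-type decomposition in which each factor has a direct quantum counterpart among the building blocks: real phase-space rotations quantize to $e^{ia(x)}$ or $e^{ia(\nabla)}$ with $a$ real quadratic; squeezings along Lagrangian directions, i.e.\ the dissipative components coming from $\operatorname{Re} Q\geq 0$, quantize to $e^{-b(x)}$ or $e^{b(\nabla)}$ with $b$ real nonnegative; off-diagonal shears quantize to $e^{\alpha x_k\partial_{x_j}}$ for $k\neq j$; and the scalar Jacobians appearing in unitary normalizations give a factor $e^{\gamma}$. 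The unknown parameters are then determined by matching the Weyl symbols on both sides of the proposed identity, which yields a solvable (rational) algebraic system.

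\emph{Main obstacle.} The decisive step is to produce such a factorization for \emph{every} complex symmetric $Q$ with $\operatorname{Re} Q\geq 0$, and to verify that the propagation of $\operatorname{Re} Q\geq 0$ through the factorization preserves the nonnegativity of the exponents $b$ in each Gaussian block, so that every factor is genuinely bounded and admissible in the sense of Definition \ref{def_comput}. Degenerate matrices $Q$ for which the natural Bruhat decomposition fails will be handled either by a density argument, perturbing $Q\leadsto Q+\varepsilon\,\mathrm{Id}$, factoring, and passing to the strong-operator-topology limit via the continuity furnished by Proposition \ref{prop_semigroup_exists}, or by pre-composing with an auxiliary Fourier-type transform (itself computable by an exact splitting in the spirit of \eqref{split_exact_harm_osci}) to swap certain pairs $(x_j,\xi_j)$ and move the problem into the generic stratum; in the latter approach an induction on the dimension $n$ seems the natural way to organise the case analysis.
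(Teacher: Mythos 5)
Your proposal takes a genuinely different route from the paper's, but there are gaps in both stages. For orientation: the paper starts from a \emph{polar decomposition}. Combining Theorem 2.1 and Lemma 3.10 of \cite{AB} with Proposition \ref{prop_key}, it writes
$e^{-tp^w}=e^{-tc}\,e^{-t(p_t^{(r)})^w}\,e^{-it(p_t^{(i)})^w}$
with $p_t^{(r)}$ real, bounded below and $p_t^{(i)}$ real. The unitary part reduces, via Lemmas \ref{lem_split_hom} and \ref{split_aff_im}, to showing that a Fourier Integral Operator $\mathscr{K}(T)$ with $T\in\mathrm{Sp}_{2n}(\mathbb{R})$ can be computed by an exact splitting; this is Lemma \ref{calc_meta}, proven not by a Bruhat decomposition but by an inverse-function-plus-connectedness argument on the group. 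The nonnegative part is stripped of its affine piece by a \emph{unitary} conjugation (Lemma \ref{lem_translat}, powered by Lemma \ref{lem_struc_pos}), and the remaining real nonnegative quadratic form is reduced by H\"ormander's symplectic classification (Theorem 21.5.3 of \cite{Lars_book}) to a direct sum of harmonic oscillators and degenerate $x_j^2$ blocks (Lemma \ref{lem_pos_quad_comp}), handled by \eqref{split_exact_harm_osci}. You instead peel off the Heisenberg factor first and then try to factor $\mathrm{Sp}_{2n}^+(\mathbb{C})$ directly.

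In Stage 1 the gap is that the Heisenberg factor $R$ you split off is of the form $e^{\gamma}\,e^{Z_x\cdot x+Z_\xi\cdot\nabla}$ with $(Z_x,Z_\xi)\in\mathbb{C}^{2n}$, and it splits into the admissible blocks $e^{i\alpha x_j}$, $e^{\alpha\partial_{x_j}}$, $e^\gamma$ of Definition \ref{def_comput} only when $Z_x$ is purely imaginary and $Z_\xi$ is purely real --- otherwise $R$ is not even bounded on $L^2$. That the constraint in fact holds is not automatic; it is exactly where the lower bound on $\Re p$ must be exploited. The paper isolates this in Lemma \ref{lem_struc_pos} (the linear part of a bounded-below real polynomial lies in the range of its Hessian), which is what makes the conjugation in Lemma \ref{lem_translat} \emph{unitary} and therefore admissible. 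Your remark about absorbing stray real exponents into Gaussian factors $e^{-b(x)}$ does not by itself keep every factor in the list of Definition \ref{def_comput}: a shifted Gaussian $e^{-b(x-a)}$ must be rewritten as $e^{a\cdot\nabla}e^{-b(x)}e^{-a\cdot\nabla}$, and the existence of such an $a$ again requires the linear term to be compatible with the Hessian --- the very fact you have not established.

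In Stage 2 the density argument $Q\leadsto Q+\varepsilon I_{2n}$ is not obviously conclusive. The parameters of a Bruhat/Gauss-type factorization are rational functions of the matrix entries and characteristically blow up at cell boundaries, so an exact splitting of $e^{-tq_\varepsilon^w}$ need not converge factor-by-factor as $\varepsilon\to 0$; what you would get is a strong limit of products whose individual factors diverge, which is not an exact splitting of the limit. The alternative of pre-composing with Fourier-type swaps and inducting on $n$ is only gestured at. The paper sidesteps all of this by never attempting to factor a general element of the \emph{monoid} $\mathrm{Sp}_{2n}^+(\mathbb{C})$ directly: the real symplectic group is handled globally by the topological argument of Lemma \ref{calc_meta}, and the degenerate nonnegative real quadratic forms are handled in one stroke by a closed classification theorem rather than case-by-case perturbation.
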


\begin{remark} Actually, we prove a slightly stronger result : the quadratic forms associated with $a$ and $b$ in the definition of Definition \ref{def_comput} can be chosen diagonal. Nevertheless, it is not necessarily relevant from a numerical point of view because the sub-steps required to diagonalize the quadratic forms can be costly.
\end{remark}

This theorem is proved in the subsection \ref{sub_proof_thm_universal} of the Appendix. Unfortunately, its proof does not provide an efficient way to split semigroups (minimizing, for example, the number of sub-steps or the number Fast Fourier Transforms required to approximate the exponentials). Nevertheless, as illustrated in Section \ref{sec_app}, on many examples, paying attention to the particular structure of each semigroup, we are able to design optimized exact splittings.

This work can be considered as the theoretical part of a more general study. Indeed, a second work \cite{JCL}, written in collaboration with Nicolas Crouseilles and Yingzhe Li, deals with the implementation of these methods and compares numerically their efficiency, their accuracy and their qualitative properties with respect to the existing methods. We also couple these exact splitting methods with classical methods in order to solve some nonlinear equations and some non quadratic linear equations.

\medskip

\paragraph{\emph{Outline of the work}} In Section \ref{sec_eslode}, we develop the notion of exact splitting in a more general framework and specify its links with the classical splittings. It will naturally lead to a general theorem to design efficient exact splittings for many linear ordinary differential equations. In Section \ref{sec_FIO}, we explain how the theory of the Fourier Integral Operators developed by H\"ormander in \cite{Hor95} can be used to transform exact splittings of linear ordinary differential equations into exact splittings of semigroups. And, finally, in Section \ref{sec_app}, we apply the results of the previous sections to obtain some efficient exact splittings for the magnetic linear Schr\"odinger equations with quadratic potentials, some transport equations and some Fokker--Planck equations. 

\medskip

\paragraph{\emph{Acknowledgment}}
The author thanks P. Alphonse, N. Crouseilles and Y. Li for many enthusiastic discussions about this work. Research of the author was supported by ANR project NABUCO, ANR-17-CE40-0025.

\medskip

\paragraph{\emph{Notations and conventions}} Let us define some classical notations used in this paper.
\begin{itemize}
\item  To get convenient notations, most of the time, we denote by $t$ (instead of $\delta_t$) the time-step of our methods.  
\item $I_n$ denotes the identity matrix on $\mathbb{R}^n$ and $J_{2n}$ denotes the matrix of the canonical symplectic form of $\mathbb{R}^{2n}$, i.e.
$$
J_{2n} := \begin{pmatrix} & I_n \\
 -I_n & 
\end{pmatrix}.
$$
\item By convention, the empty spaces in the matrix notations refer to coefficients equal to zero (see for example the definition of $J_{2n}$ above).
\item If $A$ is a matrix $\transp{A}$ denotes the transpose of $A$.
\item If $\mathbb{K}\in \{\mathbb{R},\mathbb{C}\}$, $M_{n}(\mathbb{K})$ denotes the algebra of the square matrices of size $n$ and $\mathrm{S}_n(\mathbb{K})=\{ M \in M_{n}(\mathbb{K}), \ \transp M =M \}$ denotes the vector space of the symmetric matrices.
\item For $\mathbb{K}\in \{\mathbb{R},\mathbb{C}\}$, we will use the following classical groups of matrices
$$
\begin{array}{lll} \mathrm{GL}_{n}(\mathbb{K}) = \{ M \in M_{n}(\mathbb{K}), \ M \ \mathrm{is} \ \mathrm{invertible} \} & \mathrm{SL}_{n}(\mathbb{K}) = \{ M \in M_{n}(\mathbb{K}), \ \det M =1 \} \\
\mathrm{Sp}_{2n}(\mathbb{K}) = \{ M \in M_{2n}(\mathbb{K}), \ \transp M J_{2n} M =J_{2n} \} & \mathrm{O}_{n}(\mathbb{R}) = \{ M \in M_{n}(\mathbb{R}), \ \transp{M}M=I_n \}.
\end{array}
$$
and their associated Lie algebras
$$
\begin{array}{lll} \mathfrak{gl}_{n}(\mathbb{K}) = M_n(\mathbb{K}) & \mathfrak{sl}_{n}(\mathbb{K}) = \{ M \in M_{n}(\mathbb{K}), \ \mathrm{Tr}M=0 \} \\
\mathfrak{sp}_{2n}(\mathbb{K}) = \{ J_{2n}Q, \ Q\in \mathrm{S}_{2n}(\mathbb{K})  \} & \mathfrak{so}_{n}(\mathbb{R}) = \{ M \in M_{n}(\mathbb{R}), \ \transp{M} = - M  \}.
\end{array}
$$
where $\mathrm{Tr}$ denotes the trace and the Lie bracket is formally defined through the relation $[A,B]:=AB-BA$.
\item A real Lie algebra of matrices is a sub-Lie-algebra of $\mathfrak{gl}_n(\mathbb{R})$ for some $n\geq 0$.
\item We equip the space of the polynomials of degree $2$ or less on $\mathbb{C}^{2n}$ of its structure of Lie algebra induced by the canonical Poisson bracket. This one being defined for two polynomials $p_1,p_2$ on $\mathbb{C}^{2n}\equiv \mathbb{C}^{n}_x\times \mathbb{C}^{n}_{\xi}$ by
$$
\{p_1,p_2\} := \sum_{j=1}^n \partial_{\xi_j} p_1 \partial_{x_j} p_2 - \partial_{x_j} p_1 \partial_{\xi_j} p_2.
$$
Note that if $\mathbb{K}\in \{\mathbb{R},\mathbb{C}\}$, the space of the quadratic forms on $\mathbb{K}^{2n}$ is a Lie algebra naturally isomorphic to $\mathfrak{sp}_{2n}(\mathbb{K})$.
\item If $\mathfrak{g}$ is a Lie algebra, $\mathrm{ad}:\mathfrak{g}\to \mathfrak{g}$ denotes its adjoint representation, i.e. 
$$\forall x,y\in \mathfrak{g}, \ \mathrm{ad}_{x}y := [x,y].$$
\item We consider the natural action of the analytic functions on $M_{n}(\mathbb{C})$ defined by the holomorphic functional calculus (see VII-3 of \cite{Dunford_book} for details). By abuses of notations, if $f$ is analytic on a domain $\Omega\subset \mathbb{C}$ and $M\in M_n(\mathbb{C})$ has its spectrum included in $\Omega$ then $(f(z))(M)$ is just an other way to denote $f(M)$.  Note that, if $f:\mathbb{D}(z_c,\rho) \to \mathbb{C}$ is an analytic function on the complex disk of center $z_c$ and radius $\rho>0$ and $M\in M_{n}(\mathbb{C})$ has its spectrum included in $\mathbb{D}(z_c,\rho)$ then $f(M)$ can be defined by the convergent series
$$
f(M) = \sum_{k\in \mathbb{N}} \frac{f^{(k)}(z_c)}{k!} (M-z_c I_n)^k.
$$
\item $\mathcal{S}(\mathbb{R}^n)$ denotes the Schwartz space on $\mathbb{R}^n$.
\item In a non-commutative setting, the notation $\prod$ to denote a product can be quite ambiguous. In this paper, we adopt the following natural convention. If $I$ is a totally ordered finite set and $(g_j)_{j\in I} \in M^I$, where $M$ is a monoid\footnote{ i.e. a set equipped with an associative binary operation and an identity element.}, then
$$
\prod_{j\in I} g_j := g_{\iota_1} \dots g_{\iota_{\sharp I}}
$$
where $\iota$ is the increasing bijection from $\llbracket 1, \sharp I \rrbracket$ to $I$.
\end{itemize}

\section{Exact splitting methods}
\label{sec_eslode}

The original setting of the splitting methods (see e.g \cite{HLW}) consists in considering  linear equations\footnote{Note that as usual this formalism include nonlinear equations, since it is enough to consider the transport equations they generate.} of the form
\begin{equation}
\label{equ_flou}
\left\{ \begin{array}{lll} \partial_t u = Lu = L^{(1)} u + \dots + L^{(k)} u \\
				u(t=0) = u_0.
\end{array}\right.
\end{equation}
whose solution is denoted $u(t)=\exp(tL)u_0$ and such that 
\begin{center}
$\exp(tL^{(1)}),\dots,\exp(tL^{(k)})$ are \emph{nicer} than $\exp(tL)$.
\end{center}
 In this context, \emph{nicer} usually means \emph{cheaper} to compute. Then the approximations of $u$ at times $t_n=n\delta_t$ are got compositing $n$ times an approximation $\Psi_{\delta_t}$ of $\exp(\delta_tL)$ where $\Psi_{\delta_t}$ is a composition of operators of the form $\exp(\alpha_{\sigma_j} \delta_t L^{(\sigma_j)})$ with $1\leq \sigma_j\leq k$ and $\alpha_j\in \mathbb{R}$. The most classical methods are the Lie splitting where
$$
\Psi_{\delta_t}^{(Lie)} = \exp(\delta_t L^{(1)} )\dots \exp(\delta_t L^{(k)})
$$
and the Strang splitting
$$
\Psi_{\delta_t}^{(Strang)} = \exp(\frac{\delta_t}2 L^{(1)} )\dots  \exp(\frac{\delta_t}2 L^{(k-1)}) \exp(\delta_t L^{(k)})  \exp(\frac{\delta_t}2 L^{(k-1)}) \dots \exp(\frac{\delta_t}2 L^{(1)}).
$$
These methods are respectively of order $1$ and $2$. It means that they provide respectively approximations of order $\delta_t$ and $\delta_t^{2}$ of $u(t_n)$. Similar methods can be derived to obtain splitting methods of arbitrarily high order. However, note that, the higher the order is, the higher the number of step is and so the higher the cost of the method is. Furthermore, the only way to get methods of order higher than $2$ where the $\alpha_{\sigma_j}$ are nonnegative is to allow the $\alpha_{\sigma_j}$ to be complex. In this case it is possible to design schemes of high order where the real parts of the $\alpha_{\sigma_j}$ are nonnegative (see \cite{BCCM,CCDV}). Note that this is crucial when irreversible equations are considered.

The setting of the exact splitting is the same as the setting of the classical ones. Nevertheless, we have to assume that  
\begin{center}
$\exp(tL^{(1)}),\dots,\exp(tL^{(k)})$ are nicer than $\exp(tL)$ \emph{because} $L^{(1)},\dots,L^{(k)}$ belong to some vector spaces $E_1,\dots,E_k$.
\end{center}
 This assumption is natural because usually the $\exp(tL^{(j)})$ are nice due to a particular structure of  each $L^{(j)}$ (e.g. nilpotent, diagonal...). Consequently, the idea of the exact splitting consists in looking for $\Psi_{\delta_t}$ as a product of operators of the form $\exp( \delta_t L^{(\sigma_j)}_{j,\delta_t})$ with $1\leq \sigma_j\leq k$, $L^{(\sigma_j)}_{j,\delta_t}\in E_{\sigma_j}$ and to ask that $\Psi_{\delta_t}$ is exactly equal to $\exp(\delta_t L)$. In other words, we are looking for a factorization of $\exp(\delta_t L)$ as a product of operators of the form $\exp( \delta_t L^{(\sigma_j)}_{j,\delta_t})$ with $L^{(\sigma_j)}_{j,\delta_t}\in E_{\sigma_j}$.

\begin{remark}
 To avoid any possible confusion note that $\exp( \delta_t L^{(\sigma_j)}_{j,\delta_t})$ does not denote the solution of a non-autonomous equation but the solution of $\partial_t u = L^{(\sigma_j)}_{j,\delta_t} u$ at time $t=\delta_t$. 
\end{remark}

Actually, the existence of an exact splitting can be seen as an inverse problem with respect to the classical \emph{backward error analysis} of splitting methods. In the context of the splitting method, this analysis is realized through  the \emph{Baker-Campbell-Hausdorff formula} (see \cite{HLW}). It states that
\begin{equation}
\label{BEA_BCH}
\exp(\alpha_{\sigma_1} \delta_t L^{(\sigma_1)})\dots \exp(\alpha_{\sigma_\ell} \delta_t L^{(\sigma_\ell)}) = \exp(\delta_t \, \Omega_{\delta_t,L,\alpha,\sigma})
\end{equation}
where $\Omega_{\delta_t,L,\alpha,\sigma}	$ can be expanded in powers of $\delta_t$ and the operator associated with the power $\delta_t^{n}$ is obtained as a linear combination of $n$ Lie brackets of the $L^{(j)}$. For example, we have
$$
\Omega_{\delta_t,L,\alpha,\sigma} = \sum_{j=1}^\ell \alpha_{\sigma_j}  L^{(\sigma_j)} + \frac{\delta_t}2 \sum_{i<j}  \alpha_{\sigma_j}\alpha_{\sigma_i} [L^{(\sigma_i)} , L^{(\sigma_j)} ] + \mathcal{O}(\delta_t^2)
$$
where $[L^{(\sigma_i)} , L^{(\sigma_j)} ] = L^{(\sigma_i)} L^{(\sigma_j)}  -L^{(\sigma_j)} L^{(\sigma_i)}$. Note that, in general, the formula \eqref{BEA_BCH} and the expansion of $\Omega_{\delta_t,L,\alpha,\sigma}$ have to be understood in the sense of the formal series in $\delta_t$. Nevertheless, if $L^{(1)},\dots L^{(k)}$ belong to a real Lie algebra of matrices then these expansions converge when $\delta_t$ is small enough (see e.g. \cite{HLW}). 

Now if we consider a product of operators of the form $\exp( \delta_t L^{(\sigma_j)}_{j,\delta_t})$ with $1\leq \sigma_j\leq k$, $L^{(\sigma_j)}_{j,\delta_t}\in E_{\sigma_j}$, where the $E_j$ are some vector subspaces of a same real Lie algebra of matrices then the Baker-Campbell-Hausdorff formula states that it is of the form 
\begin{equation}
\label{omega_tilde}
\exp(\alpha_{\sigma_1} \delta_t L^{(\sigma_1)}_{1,\delta_t})\dots \exp(\alpha_{\sigma_\ell} \delta_t L^{(\sigma_\ell)}_{\ell,\delta_t}) = \exp(\delta_t \widetilde{\Omega}_{\delta_t,L,\sigma}),
\end{equation}
where $\widetilde{\Omega}_{\delta_t,L,\sigma}$ admits an expansion similar to the expansion of $\Omega_{\delta_t,L,\alpha,\sigma}$.
Consequently, to get an exact splitting method we just have to design $L^{(\sigma_\ell)}_{j,\delta_t}$ in order to cancel all the Poisson brackets in the expansion of $ \widetilde{\Omega}_{\delta_t,L,\sigma}$, i.e. we have to solve
\begin{equation}
\label{eq_to_solve}
\widetilde{\Omega}_{\delta_t,L,\sigma} = \sum_{j=1}^k L^{(j)}.
\end{equation}
Since it can be shown that $\widetilde{\Omega}_{\delta_t,L,\sigma}$ is smooth with respect to $\delta_t$ and $(L^{(\sigma_j)}_{j,\delta_t})_{1\leq j\leq \ell}$, it is natural to try to solve \eqref{eq_to_solve} with the Implicit Function Theorem. 

In this way, we prove the following theorem for which we have many applications. From now, to get convenient notations, we denote $t$ instead of $\delta_t$.  
\begin{theorem} \label{thm_split_Lie} Let $m$ be a positive integer and $\mathfrak{b}_1,\dots, \mathfrak{b}_m,\mathfrak{s}$ be some subspaces of a real Lie algebra of matrices such that $\mathfrak{b}_1,\dots, \mathfrak{b}_m$ are complementary. If $b_{\star}=b_{\star,1}+\dots+b_{\star,m}\in \mathfrak{b}_1\oplus \dots \oplus \mathfrak{b}_m= \mathfrak{b}$  is such that 
\begin{equation}
\label{the_assumption}
( \mathfrak{b}_1\oplus \dots \oplus \mathfrak{b}_m)+ \mathrm{ad}_{b_{\star}}(\mathfrak{s}) \ \ \mathrm{ is \ a\ real\ Lie \ algebra}
\end{equation}
then there exist $t_0>0$ and an analytic function $t\in (-t_0,t_0) \mapsto (b_t,s_t)\in \mathfrak{b}\times \mathfrak{s}$ such that $b_0=b_{\star}$ and
\begin{equation}
\label{split_ex_Lie}
\forall t\in (-t_0,t_0), \ e^{t b_{\star}} = e^{-t  s_t} e^{tb_{t,1}} \dots e^{tb_{t,m}}e^{t  s_t}.
\end{equation}
\end{theorem}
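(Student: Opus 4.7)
The plan is to reformulate the desired identity as the zero-locus of an analytic map and invoke the analytic Implicit Function Theorem, following the strategy sketched in the discussion immediately preceding the statement. Because $-ts$, $tb_1,\dots,tb_m,ts$ all lie in a real matrix Lie algebra, the Baker–Campbell–Hausdorff series converges for $t$ small, and furnishes a real-analytic function $\widetilde\Omega(t,b,s)$ on a neighborhood of $(0,b_\star,s_0)$ with values in the ambient Lie algebra $\mathfrak g$ satisfying
\[
e^{-ts}e^{tb_1}\cdots e^{tb_m}e^{ts} = \exp\bigl(t\,\widetilde\Omega(t,b,s)\bigr),\qquad \widetilde\Omega(0,b,s)=b_1+\cdots+b_m=b,
\]
so \eqref{split_ex_Lie} is equivalent to $\widetilde\Omega(t,b_t,s_t)=b_\star$. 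A short BCH computation yields
\[
\widetilde\Omega(t,b,s)=b+t\Bigl(\mathrm{ad}_{b}(s)+\tfrac12\sum_{i<j}[b_i,b_j]\Bigr)+O(t^2),
\]
the outer $\pm ts$ cancelling at order $t^{0}$.

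Since this equation is degenerate at $t=0$ (every $s$ is admissible once $b=b_\star$), I rescale by writing $b=b_\star+t\beta$ with $\beta\in\mathfrak b$ and define
\[
F(t,\beta,s):=\tfrac1t\bigl(\widetilde\Omega(t,b_\star+t\beta,s)-b_\star\bigr),
\]
extended analytically to $t=0$ by $F(0,\beta,s)=\beta+\mathrm{ad}_{b_\star}(s)+\tfrac12\sum_{i<j}[b_{\star,i},b_{\star,j}]$. Writing $\mathfrak L:=\mathfrak b+\mathrm{ad}_{b_\star}(\mathfrak s)$, the hypothesis that $\mathfrak L$ is a Lie subalgebra ensures $\tfrac12\sum_{i<j}[b_{\star,i},b_{\star,j}]\in\mathfrak L$ (it is a sum of brackets of elements of $\mathfrak b\subset\mathfrak L$), so there exist $(\beta_0,s_0)\in\mathfrak b\times\mathfrak s$ with $F(0,\beta_0,s_0)=0$; moreover the linearization $D_{(\beta,s)}F(0,\beta_0,s_0)\colon(\beta',s')\mapsto \beta'+\mathrm{ad}_{b_\star}(s')$ is by definition surjective onto $\mathfrak L$. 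I then pick a subspace $\mathfrak s_0\subset\mathfrak s$ complementary to the kernel of $s\mapsto\mathrm{ad}_{b_\star}(s)\bmod\mathfrak b$, so that the restricted map $\mathfrak b\times\mathfrak s_0\to\mathfrak L$ becomes a linear isomorphism, and apply the analytic IFT to $F$ restricted to $\mathbb R\times\mathfrak b\times(s_0+\mathfrak s_0)$ viewed as valued in $\mathfrak L$. This produces an analytic curve $t\mapsto(\beta(t),s(t))$ with $F(t,\beta(t),s(t))=0$ and $(\beta(0),s(0))=(\beta_0,s_0)$, whence $b_t:=b_\star+t\beta(t)$ and $s_t:=s(t)$ deliver the splitting with $b_0=b_\star$.

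The main obstacle I expect is the verification that $F$ can legitimately be regarded as valued in the subspace $\mathfrak L$ (and not merely in the ambient $\mathfrak g$) on a full neighborhood of $(0,\beta_0,s_0)$; this is what allows an IFT argument with a surjective-onto-$\mathfrak L$ linearization to produce a genuine solution of $F=0$ rather than only of its $\mathfrak L$-component. This is where the \emph{full} closure $[\mathfrak L,\mathfrak L]\subset\mathfrak L$ must be used, beyond the single-bracket consequence invoked to produce $(\beta_0,s_0)$: the higher-order BCH terms of $F$ are iterated Lie brackets built from $b_\star$, the $b_{\star,i}$, the perturbation $\beta$, and $s$, and one must check, using the Lie-algebra structure of $\mathfrak L$, that these iterated brackets remain in $\mathfrak L$ once the first-order constraint is imposed. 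Once that technical bookkeeping is in place, the IFT step is standard and supplies the analyticity and the bound $t_0>0$.
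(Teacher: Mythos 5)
Your strategy---Baker--Campbell--Hausdorff followed by the Implicit Function Theorem near $t=0$---is the same as the paper's. The only difference is cosmetic and concerns where the degenerate scaling is placed: you absorb it into the unknown via $b=b_\star+t\beta$ and divide $\widetilde\Omega-b_\star$ by $t$, while the paper keeps $b$ itself as the variable, splits the codomain as $\mathfrak g=\mathfrak b\oplus\mathfrak r$, and puts the $t^{-1}$ only into the $\mathfrak r$-component (encoded by $\Psi=\Pi_{\mathfrak r}\circ\mathrm{ad}_{b_\star}$ and the pair $(G_{\mathfrak b},G_{\mathfrak{s}'})$). Both choices give the same unit-triangular linearization at $t=0$, so up to book-keeping the two arguments coincide.

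The issue you flag at the end is indeed the crux, and you are right to worry about it, but the resolution you sketch---``use $[\mathfrak L,\mathfrak L]\subset\mathfrak L$ on the higher-order BCH terms''---does not actually close the gap under the stated hypothesis. Writing $\widetilde\Omega(t,b,s)=e^{-t\,\mathrm{ad}_s}c(t,b)$, where $c(t,b)$ is the BCH logarithm of $e^{tb_1}\cdots e^{tb_m}$ divided by $t$, one has $c\in\langle\mathfrak b\rangle_{\mathrm{Lie}}\subset\mathfrak L$, but the iterated operators $\mathrm{ad}_s^k$ need not preserve $\mathfrak L$: nothing in \eqref{the_assumption} forces $\mathfrak s\subset\mathfrak L$, and $[\mathfrak L,\mathfrak L]\subset\mathfrak L$ only governs brackets whose \emph{both} factors already lie in $\mathfrak L$. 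The dangerous terms are of the form $[s,x]$ with $s\in\mathfrak s\setminus\mathfrak L$ and $x\in\mathfrak L$; for instance $[s,\,b-b_\star]$, which enters at order $t^2$ as soon as $b$ departs from $b_\star$, has no reason to remain in $\mathfrak L$, and your first-order normalization $F(0,\beta_0,s_0)=0$ only pins down $\mathrm{ad}_{b_\star}s_0$, not $\mathrm{ad}_b s$ for nearby $b$. What the IFT set-up really requires is that $\widetilde\Omega$ itself be $\mathfrak L$-valued on the relevant neighbourhood---precisely the ``optimal assumption'' alluded to in the remark after the statement. That property \emph{is} automatic when $\mathfrak s\subset\mathfrak b$ (for instance $\mathfrak s=\mathfrak b_1$, the only case used in the applications), because then $\mathfrak s\subset\mathfrak L$ and bracket-closure of $\mathfrak L$ applies term by term in the BCH series. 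Be aware that the paper's own proof asserts without verification that the BCH remainder $F$ maps into $\mathfrak g=\mathfrak L$, so you have correctly located a soft spot shared with the published argument; but under the hypothesis as stated your ``technical bookkeeping'' would not go through without an additional assumption of the type $\mathfrak s\subset\mathfrak b$.
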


\begin{remark}
Before proving this theorem, let us do some remarks. 
\begin{itemize}[leftmargin=*]
\item In most of the applications, $\mathfrak{s}$ can be chosen as a one of the $\mathfrak{b}_j$. Consequently, assuming that $\mathfrak{s} = \mathfrak{b}_1$, the notations of this theorem are consistent with the notations previously introduced in this section through the identifications $\delta_t=t$, $k=m$, $E_j = \mathfrak{b}_j$, $b_\star = L^{(1)}+\dots+L^{(m)}$, $\ell=k+2$, $L_{1,\delta_t}^{(1)}=-s_t, L_{j,\delta_t}^{(j-1)}=b_{t,j-1}$, $L_{\ell,\delta_t}^{(1)}= s_t$ where $j\in \llbracket 2,m+1\rrbracket$.
\item Since the proof of this theorem relies on the Implicit Function Theorem, the coefficients of the exact splitting (i.e. $s_t$ and $b_t$) can be efficiently computed by an iterative method.
Unfortunately, this method require some notations introduced in the proof of Theorem \ref{thm_split_Lie} to be presented. Consequently, it is introduced just after the proof.
\item In practice, as we will see in Section \ref{sec_app}, it may be useful to use Theorem \ref{thm_split_Lie} just to determine a priori the form of an exact splitting and then to get analytically the associated coefficients (using for example a formal computation software).
\item The assumption \eqref{the_assumption} of Theorem \ref{thm_split_Lie} is a bit too strong. The optimal assumption seems that $\widetilde{\Omega}_{\delta_t,L,\sigma}$ (defined implicitly by \eqref{omega_tilde}) belongs to the vector space defined in \eqref{the_assumption} for all $L$ and $\delta_t$. Paying attention to the expansion of $\widetilde{\Omega}_{\delta_t,L,\sigma}$ given by the Baker-Campbell-Hausdorff formula, it essentially means that we do not need to ask this space to contain Lie brackets of elements belonging to a same space $\mathfrak{b_j}$. Note that, such a generalization would be necessary to justify \emph{a priori} the form of the exact splitting of the Example 2.3 of \cite{AB} for the Kramer--Fokker--Planck operator.
\end{itemize}
\end{remark}

\begin{proof}[Proof of Theorem \ref{thm_split_Lie}] Naturally, following the assumption of the theorem, we introduce the real Lie algebra of matrices, denoted $\mathfrak{g}$ and defined by
\begin{equation}
\label{dec_of_g}
\mathfrak{g} = ( \mathfrak{b}_1\oplus \dots \oplus \mathfrak{b}_m)+ \mathrm{ad}_{b_{\star}}(\mathfrak{s}).
\end{equation}

Applying the Baker Campbell Hausdorff formula, we get a neighborhood of the origin in $ \mathfrak{b}\times  \mathfrak{s} $, denoted $V$, and an analytic function $F:V\to  \mathfrak{g}$ such that for all real $t$ and all $ (b,s)\in  \mathfrak{b}\times \mathfrak{s}$ such that $  t(b,s)\in V$, we have
$$
e^{-t s}e^{t b_1} \dots e^{t b_m}e^{t s}=\\ \exp \left( t b+ t^2 [b,s] + \frac{t^2}2 \sum_{1\leq i<j\leq m} [b_i,b_j] +
 t^3F(tb,ts) \right).
$$
Consequently, we aim at solving the equation
\begin{equation}
\label{the eq to solve}
 b_{\star} = \\ b + t  \mathrm{ad}_b(s) + \frac{t}2 \sum_{1\leq i<j\leq m} [b_i,b_j] +
 t^2 F(tb,ts).
\end{equation}

To solve this equation we have to introduce some notations. First, from the decomposition \eqref{dec_of_g} of $\mathfrak{g}$, we deduce naturally that there exists a complementary space to $ \mathfrak{b}$ in $\mathfrak{g}$ denoted $\mathfrak{r}$, a subspace of $\mathfrak{s}$ denoted $\mathfrak{s}'$ and a vector space isomorphism $\Psi : \mathfrak{s}' \to \mathfrak{r}$ such that
\begin{equation}
\label{def_Psi}
\Psi = \Pi_{\mathfrak{r}} \circ \mathrm{ad}_{b_{\star}}
\end{equation}
where $\Pi_{\mathfrak{b}},\Pi_{\mathfrak{r}}$ are the canonical projections associated with the decomposition 
\begin{equation}
\label{the_dec}
\mathfrak{g}=\mathfrak{b}\oplus \mathfrak{r}.
\end{equation}
Then, let $s_{\star} \in \mathfrak{s}'$ be defined by
\begin{equation}
\label{def_srstar}
s_{\star}=  -\frac12 (\Psi^{-1}\circ \Pi_{\mathfrak{r}}) \left( \sum_{1\leq i<j\leq m} [b_{\star,i},b_{\star,j}] \right),
\end{equation}
let $V_{\star}$ be a neighborhood of $(b_{\star},s_{\star})$ in $\mathfrak{b}\times \mathfrak{s}'$ and let $t_{\star}>0$ be such that 
$$(-t_{\star},t_{\star}) V_{\star}\subset V.$$
So, to solve \eqref{the eq to solve}, we are going to apply the Implicit Function Theorem in $(0,b_{\star},s_{\star})$ to the function
$$
G:\left\{ \begin{array}{ccc}(-t_{\star},t_{\star}) \times V_{\star} &\to& \mathfrak{b}\times \mathfrak{s}' \\
(t,b,s) &\mapsto& (G_{\mathfrak{b}},G_{\mathfrak{s}'})(t,b,s)  \end{array}   \right.
$$  
where 
$$
 G_{\mathfrak{b}}(t,b,s)= b-b_{\star}+  t \ \Pi_{\mathfrak{b}} R(t,b,s) .
$$
with
$$
R(t,b,s) = [b,s]  + \frac12  \sum_{1\leq i<j\leq m} [b_i,b_j] + t F(tb,ts),
$$
and
$$
\Psi G_{\mathfrak{s}'}(t,s^{(m)},b,s^{(r)}) = \Pi_{\mathfrak{r}} \circ \mathrm{ad}_{b} s +  \frac12 \Pi_{\mathfrak{r}}   \sum_{1\leq i<j\leq m} [b_i,b_j]  \\+ t\ \Pi_{\mathfrak{r}}  F(tb,ts).
$$
 First, observe that, by construction of $G$, if $G(t,b,s)=0$ then $b,s$ is a solution of \eqref{the eq to solve}. Indeed, by construction, the equation \eqref{the eq to solve} can be rewritten as 
 $$( G_{\mathfrak{b}} + t \Psi G_{\mathfrak{s}'})(t,b,s) = 0.$$

  Then observe that, by construction of $s_{\star}$ we have 
 $$G(0,b_{\star},s_{\star})=0.$$
 Consequently, since $G$ is clearly an analytic function, to conclude the proof applying the Implicit Function Theorem, we just have to prove that the partial differential of $G$ with respect to $(b,s)$ in $(0,b_{\star},s_{\star})$ is invertible. Indeed, using a natural matrix representation, this differential is
 $$
 \begin{pmatrix}	 \mathrm{I}_{\mathfrak{b} }& \\
			 L_{\mathfrak{b},\star} & \mathrm{I}_{\mathfrak{s}' }
 \end{pmatrix}
 $$
 where  $L_{\mathfrak{b},\star}: \mathfrak{b} \to \mathfrak{s}'$ is an explicit linear map depending on $b_{\star}$ and $\mathfrak{s}'$. This matrix being triangular, it is clearly invertible and its invert is
  $$
 \begin{pmatrix}	 \mathrm{I}_{\mathfrak{b} }& \\
			-L_{\mathfrak{b},\star} & \mathrm{I}_{\mathfrak{s}' }
 \end{pmatrix}
 $$
\end{proof}

Now, let us present an iterative method to determine the coefficients $b_t$ and $s_t$ given by the Theorem \ref{thm_split_Lie}.
\begin{proposition} \label{prop_iter_gen} Assume that the assumption of Theorem \ref{thm_split_Lie} is satisfied, let $\Psi$ be defined by \eqref{def_Psi}, $s_{\star}$ be defined by \eqref{def_srstar} and $\Pi_{\mathfrak{b}},\Pi_{\mathfrak{r}}$ be the projections canonically associated with the decomposition \eqref{the_dec}.  \\
There exists $t_1\in (0,t_0)$, such that for all $t\in (-t_1,t_1)$ the sequence $(b_{t}^{(k)},s_t^{(k)})_{k\in \mathbb{N}}$  if well defined by induction as follow :

Initially, we have  $b_{t}^{(0)} = b_{\star}$ and $s^{(0)}_{t}=s_{\star}$ and for $k\geq 0$

\begin{equation}
\label{naive_induction}
\left\{ \begin{array}{llllllr}
b_{t}^{(k+1)} &=& b_{t}^{(k)} &+& b_{\star} &-& \Pi_{\mathfrak{b}} g^{(k)}\\
s^{(k+1)}_{t} &=& s^{(k)}_{t} && &-&t^{-1}\Psi^{-1} \Pi_{\mathfrak{r}} g^{(k)}\\
\end{array}
\right.
\end{equation}
where
$$
g^{(k)}=t^{-1}\log( e^{-t  s_t^{(k)}} e^{tb_{t,1}^{(k)}} \dots e^{tb_{t,m}^{(k)}}e^{t  s_t^{(k)}}).
$$
 Furthermore, there exist $C>0$ independent of $t$ and $k$ such that for all $k\geq 0$ and all $t\in (-t_1,t_1)$, we have
$$
  | b_{t}^{(k)} - b_t | + | s^{(k)}_t - s_t |  \leq C \ 2^{-k}.
$$
\end{proposition}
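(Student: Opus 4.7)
The plan is to recognize the recurrence \eqref{naive_induction} as a fixed-point iteration and to treat it by a Banach contraction argument. From the proof of Theorem \ref{thm_split_Lie}, we have $e^{-ts}e^{tb_1}\cdots e^{tb_m}e^{ts}=\exp(tb+t^2R(t,b,s))$, so $g^{(k)}=b_t^{(k)}+t\,R(t,b_t^{(k)},s_t^{(k)})$. Using $\Pi_{\mathfrak{b}}b=b$ for $b\in\mathfrak{b}$ and $\Pi_{\mathfrak{r}}b=0$, the induction \eqref{naive_induction} then reads $(b_t^{(k+1)},s_t^{(k+1)})=\Phi_t(b_t^{(k)},s_t^{(k)})$ with
$$
\Phi_t(b,s):=\bigl(b_{\star}-t\,\Pi_{\mathfrak{b}}R(t,b,s),\ s-\Psi^{-1}\Pi_{\mathfrak{r}}R(t,b,s)\bigr).
$$
By construction, a pair $(b,s)\in\mathfrak{b}\times\mathfrak{s}'$ is a fixed point of $\Phi_t$ if and only if it solves \eqref{the eq to solve}, so Theorem \ref{thm_split_Lie} tells us that $(b_t,s_t)$ is the unique such fixed point in a neighborhood of $(b_\star,s_\star)$, depending analytically on $t$.

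The heart of the proof is to show that $D\Phi_t$, evaluated at $(b_t,s_t)$, is a strict contraction in a well-chosen norm. At $t=0$ and $(b,s)=(b_\star,s_\star)$, the formula $D_sR(0,b_\star,s_\star)\delta s=\mathrm{ad}_{b_\star}\delta s$ combined with \eqref{def_Psi} gives $\Psi^{-1}\Pi_{\mathfrak{r}}\circ D_sR(0,b_\star,s_\star)=\mathrm{Id}_{\mathfrak{s}'}$, and hence
$$
D\Phi_0(b_\star,s_\star)=\begin{pmatrix}0&0\\-\Psi^{-1}\Pi_{\mathfrak{r}}D_bR(0,b_\star,s_\star)&0\end{pmatrix},
$$
which is nilpotent of square zero. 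For $|t|$ small, $D\Phi_t(b_t,s_t)$ is an $O(t)$-perturbation of this matrix and therefore has the block shape $\bigl(\begin{smallmatrix}O(t)&O(t)\\O(1)&O(t)\end{smallmatrix}\bigr)$.

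The main obstacle is that $D\Phi_0(b_\star,s_\star)$ is not small in the standard operator norm, only in spectral radius. To nevertheless extract the rate $2^{-k}$ (rather than the $2^{-k/2}$ one would obtain by applying Banach to $\Phi_t^{2}$), I would introduce the weighted norm $\|(\delta b,\delta s)\|_w:=|\delta b|+w|\delta s|$. The block estimate above yields constants $c_1,\dots,c_4>0$, independent of $t$, such that
$$
\|D\Phi_t(b_t,s_t)(\delta b,\delta s)\|_w\leq (c_1 t+c_3 w)\,|\delta b|+t(c_2+c_4 w)\,|\delta s|.
$$
Fixing $w=1/(4c_3)$ and then taking $|t|\leq t_1$ small enough so that $c_1 t\leq 1/4$ and $t(c_2+c_4 w)\leq w/2$ gives $\|D\Phi_t(b_t,s_t)\|_w\leq 1/2$; by continuity this bound persists on a $\|\cdot\|_w$-ball of fixed radius around $(b_t,s_t)$.

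To conclude, I would apply Banach's fixed point theorem. By analyticity of $t\mapsto(b_t,s_t)$, the initial iterate $(b_\star,s_\star)=(b_0,s_0)$ lies, for $|t|\leq t_1$ small, inside the contraction ball; the iterates then remain there (which in particular ensures that the logarithm defining $g^{(k)}$ is convergent and that $(b_t^{(k)},s_t^{(k)})\in V_\star$), and
$$
\|(b_t^{(k)}-b_t,\ s_t^{(k)}-s_t)\|_w\leq 2^{-k}\,\|(b_\star-b_t,\ s_\star-s_t)\|_w.
$$
Converting back via $|x|+|y|\leq\max(1,w^{-1})\,\|(x,y)\|_w$ finally gives $|b_t^{(k)}-b_t|+|s_t^{(k)}-s_t|\leq C\cdot 2^{-k}$ with $C$ independent of $t\in(-t_1,t_1)$ and $k\in\mathbb{N}$.
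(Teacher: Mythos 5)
Your proposal is correct and is essentially the paper's own argument: identify the recurrence as a fixed-point iteration, compute the differential of the iteration map at $t=0$, $(b_\star,s_\star)$, observe it is nilpotent (strictly block-lower-triangular), pass to an adapted norm in which it has operator norm $\le 1/2$, and conclude by the contraction mapping principle. The only cosmetic difference is that you build the weighted norm $|\delta b|+w|\delta s|$ explicitly, whereas the paper invokes Lemma 5.6.10 of Horn--Johnson to produce such a norm abstractly.
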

Note that, here, as usual, the $\log$ function denotes the principal determination of the matrix logarithm. We have chosen to present an iterative method as elementary as possible. However, computing the Lie brackets associated with the operators $L_{\mathfrak{s},\star}$ and $L_{\mathfrak{b},\star}$ it would be possible to get a natural iterative method whose convergence rate would be $\tau^k$ instead of $2^{-k}$ with $\tau$ a linear function of $|t|$.

\begin{proof}[Proof of Proposition \ref{prop_iter_gen}] It follows from the proof of the theorem \ref{thm_split_Lie} that the sequence is defined by a relation of the kind
$$
(b_{t}^{(k+1)} ,s^{(k+1)}_{t})= W_t(b_{t}^{(k)} ,s^{(k)}_{t}) 
$$
where for all $t\in (-t_0,t_0)$, $W_t$ is a smooth function on a same ball $\mathcal{B}$ of center $(b_{\star},s_{\star})$ and $t\mapsto W_t$ is smooth. Consequently, to prove that the sequence is well defined if $|t|$ is small enough  and that $W_t$ has a fix point we just have to prove that $W_t$ is a contraction mapping for a well chosen norm. Indeed, it follows of the proof of the theorem \ref{thm_split_Lie} that the differential of $W_0$ in $(b_{\star},s_{\star})$, denoted $\mathrm{d} W_0(b_{\star},s_{\star})$,  is 
  $$
 \begin{pmatrix}  \mathrm{0}_{\mathfrak{b} }& \\
			 -L_{\mathfrak{b},\star} & \empty \mathrm{0}_{\mathfrak{s}' }
 \end{pmatrix}.
 $$
Thus, since it is nilpotent, applying for example the Lemma 5.6.10 of \cite{Horn_book}, we get a matrix norm $\| \cdot \|_{\star}$ such that 
$$\|\mathrm{d} W_0(b_{\star},s_{\star})\|_{\star}\leq 1/4.$$
Now, since $W$ is smooth on $(-t_0,t_0)\times \mathcal{B}$ and $W_0(b_{\star},s_{\star})=(b_{\star},s_{\star}) $, we deduce of the mean value inequality that there exist $t_2\in (0,t_0)$ and a ball $\mathcal{B}_{\star}$, for the norm $\| \cdot \|_{\star}$, centered in $(b_{\star},s_{\star})$ such that for all $t\in (-t_2,t_2)$, $W_t$ is $1/2$ Lipschitzian on $\mathcal{B}_{\star}$, and $W_t \mathcal{B}_{\star} \subset \mathcal{B}_{\star}$. Consequently, $W_t$ has an unique fix point in $\mathcal{B}_{\star}$ and the sequence $(b_{t}^{(k)} ,s^{(k)}_{t})$ converges to this fix point with the rate $2^{-k}$.

Finally, we conclude this proof observing that, by construction and continuity, there exists $t_1\in(0,t_2)$ such that for all $t\in (-t_1,t_1)$, $(b_t,s_t)$ belongs to $\mathcal{B}_{\star}$ and is a fix point of $W_t$.
\end{proof}

\section{Exact classical-quantum correspondance}
\label{sec_FIO}

As we have seen in Section \ref{sec_eslode}, exact splittings can be designed (using for example Theorem \ref{thm_split_Lie}) to solve linear ordinary differential equations. We aim at designing exact splittings to solve some partial differential equations. So, first, it is natural to focus on linear transport equations. Indeed, the formula of the characteristics 
\begin{equation}
\label{charac_formula}
e^{t \ Bx\cdot \nabla} u = u\circ e^{t B}
\end{equation}
where $u\in L^2(\mathbb{R}^n)$ and $B$ is square matrix of size $n$, provides a natural way to transform an exact splitting at the level of the linear ordinary differential equations into an exact splitting  at the level of the linear transport equations.

For example, we have the following exact splitting for the two-dimensional rotations
\begin{equation}
\label{magic_formula}
\exp \left(
\begin{array}{cc}
0 & \tan (\theta/2) \\
0 & 0
\end{array}
\right)
\exp \left(
\begin{array}{cc}
0 & 0 \\
-\sin \theta & 0
\end{array}
\right)
\exp \left(
\begin{array}{cc}
0 & \tan (\theta/2) \\
0 & 0
\end{array}
\right) 
 = \exp( \theta J_2 ) ,  
\end{equation}
where $\theta \in (-\pi,\pi)$. At the level of the associated transport equation, this formula can be written
$$
e^{t (x_2\partial_{x_1} -  x_1\partial_{x_2}) } = e^ {\tan(t/2) x_2 \partial_{x_1}  } e^ {- \sin(t) x_1 \partial_{x_2}  } e^ {\tan(t/2) x_2 \partial_{x_1}  }.
$$
This factorization is very useful to compute rotations since, using semi-Lagrangian methods, it only requires one dimensional interpolations instead of a two dimensional interpolations as we could expect. The formula is well known in image processing and has been used for decades to rotate images (see e.g. \cite{paeth}). Two recent papers deal with the applications of this decomposition for the numerical resolution of kinetic equations (see \cite{Ameres,JCC}). This factorization has also been extended to compute $3$ dimensional rotations with only one dimensional interpolations (see e.g. \cite{3drotation,3drotation2}). Note that, in Subsection \ref{sub_transp}, we extend this kind of decomposition in any dimension and for more general transforms (including rotations and dilatations).

The inhomogeneous quadratic differential operators and the semigroups they generate enjoy some strong and specific properties providing a more general way to transform an exact splitting at the level of the linear ordinary differential equations into an exact splitting at the semigroup level.

Indeed, usually, the Lie bracket of two pseudo-differential operators is also a pseudo-differential operator. Its symbol is given by the \emph{Moyal bracket} of the symbols. In many applications, the Moyal bracket admits a natural expansion whose leading part\footnote{in some specific sense depending on the problem.} is given by the \emph{Poisson bracket} of the symbols (see e.g. \cite{Lars_book}). Furthermore, in the particular case of the inhomogeneous quadratic operators all the higher order terms vanish and we have
\begin{equation}
\label{eq_bracket}
[q_1^w,q_2^w]=-i\{q_1,q_2\}^w
\end{equation}
where $q_1,q_2$ are some polynomials of degree $2$ or less on $\mathbb{C}^{2n}$. 

The formula \eqref{eq_bracket} is especially useful to realize some changes of unknowns in order to put some operators in normal form (see e.g. \cite{BGMR} for an application to the reducibility that can also be seen as a factorization of exponentials). Note that, applying formally the Baker--Campbell--Hausdorff formula at the level of the operator, \eqref{eq_bracket} suggests a way to transform an exact splitting at the level of the linear ordinary differential equations into an exact splitting at the level of the semigroups generated by inhomogeneous quadratic forms. We refer the reader to the beginning of the Section $3$ of \cite{AB} for details about this heuristic. It is used in \cite{Chin,Blanes,Bader} to design some exact splitting methods to solve linear Schr\"odinger equations with harmonic potentials and rotating terms. The result suggested by these formal computations is made rigorous in Proposition \ref{prop_key} below, and relies on the representation of the semigroups as \emph{Fourier Integral Operators} (detailed in Theorem \ref{merci_lars} below).

To present this notion, we need to introduce some basic notations and associated properties.
\begin{definition} $T$ is a \emph{non-negative complex symplectic linear bijection} on $\mathbb{C}^{2n}$, and we denote $T \in \mathrm{Sp}_{2n}^+(\mathbb{C})$, if $T\in \mathrm{Sp}_{2n}(\mathbb{C})$ and 
$$
\forall X\in \mathbb{C}^{2n}, \ \transp{\overline{X}}\transp{\overline{T}} (-iJ_{2n}) T X- \transp{\overline{X}}(-iJ_{2n})X \in \mathbb{R}_+.
$$
\end{definition}
Note that this set is naturally equipped with a structure of monoid (i.e. it is stable by multiplication and the identity belongs to $\mathrm{Sp}_{2n}^+(\mathbb{C})$).

\begin{definition} If $q$ is a quadratic form on $\mathbb{C}^{2n}$ then its \emph{Hamiltonian flow} at time $t\in \mathbb{R}$, denoted by $\Phi_t^q$, is the flow at time $t$ of the linear ordinary differential equation
\begin{equation}
\label{hamilt_eq}
 \dfrac{\mathrm{d}}{\mathrm{d} t}z = - i J_{2n} \nabla q(z)
\end{equation}
where $\nabla q = \transp(\partial_{x_1}q,\dots,\partial_{x_{n}}q,\partial_{\xi_1}q,\dots,\partial_{\xi_{n}}q)$.
\end{definition}
Note that, in particular, the linear ordinary differential equation \eqref{hamilt_eq} can be solved using an exponential and thus we have 
\begin{equation}
\label{faudrait_surtout_pas_loublier}
\Phi_t^q = e^{-2itJ_{2n}Q}
\end{equation}
where $Q$ is the matrix of $q$. The following proposition summarizes some elementary properties of these Hamiltonian flows that will be used all along this paper.
\begin{proposition}
\label{prop_elementary_ham} Let $q_1,q_2,q$ be some quadratic forms on $\mathbb{C}^{2n}$ and $T\in \mathrm{Sp}_{2n}(\mathbb{C})$ then the following properties holds
\begin{enumerate}[label=\roman*)]
\item $\forall t\in \mathbb{R}$, $\Phi^q_t\in \mathrm{Sp}_{2n}(\mathbb{C})$,
\item $\{q_1,q_2\}=0$ $\iff$ $\forall t\in \mathbb{R},$  $\Phi_t^{q_1}\Phi_t^{q_2} = \Phi_t^{q_1+q_2}$,
\item $\forall t\in \mathbb{R}$, $T^{-1} \Phi_t^q T = \Phi_t^{q\circ T} $.
\end{enumerate}
\end{proposition}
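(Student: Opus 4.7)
The plan is to exploit the explicit representation $\Phi_t^q = e^{-2itJ_{2n}Q}$ given by \eqref{faudrait_surtout_pas_loublier}, which reduces each of the three claims to an elementary identity on matrices in $\mathfrak{sp}_{2n}(\mathbb{C})$.

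For (i), I would write the generator as $tJ_{2n}(-2iQ)$ and note that $-2iQ$ remains symmetric because $Q$ is. Hence the generator lies in $\mathfrak{sp}_{2n}(\mathbb{C}) = \{J_{2n}S : S \in \mathrm{S}_{2n}(\mathbb{C})\}$ (as recalled in the Notations), and its exponential belongs to $\mathrm{Sp}_{2n}(\mathbb{C})$ by the standard one-line verification: differentiating $t \mapsto \transp{e^{tM}}J_{2n}e^{tM}$ and using the defining relation $\transp{M}J_{2n} + J_{2n}M = 0$ of $\mathfrak{sp}_{2n}(\mathbb{C})$ shows the derivative vanishes identically.

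For (ii), I would invoke the Lie algebra isomorphism between quadratic forms on $\mathbb{C}^{2n}$ with the Poisson bracket and $\mathfrak{sp}_{2n}(\mathbb{C})$ with the matrix commutator (recalled in the Notations); under the identification $q_j \mapsto M_j := -2iJ_{2n}Q_j$, this sends $\{q_1,q_2\}$ to $-[M_1, M_2]$ (up to a harmless constant), and is linear so $M_{q_1+q_2} = M_1 + M_2$. The direction $\{q_1,q_2\}=0 \Rightarrow \Phi_t^{q_1}\Phi_t^{q_2}=\Phi_t^{q_1+q_2}$ is then the classical identity $e^{tA}e^{tB} = e^{t(A+B)}$ for commuting $A,B$. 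For the converse, I would Taylor-expand both sides at $t=0$; the first-order terms agree tautologically, and equating the $t^2$ coefficients gives $2M_1M_2 = M_1M_2 + M_2M_1$, i.e.\ $[M_1,M_2] = 0$, which translates back to $\{q_1,q_2\}=0$.

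For (iii), I would compute both sides directly. Conjugation of the exponential gives $T^{-1}\Phi_t^q T = e^{-2it\, T^{-1}J_{2n}QT}$, while the matrix of $q\circ T$ being $\transp{T}QT$ yields $\Phi_t^{q\circ T} = e^{-2it\, J_{2n}\transp{T}QT}$. Equality of the exponents therefore reduces to the purely algebraic identity $T^{-1}J_{2n} = J_{2n}\transp{T}$, equivalently $T J_{2n}\transp{T} = J_{2n}$, which is immediate from the defining symplectic relation $\transp{T}J_{2n}T = J_{2n}$ by multiplying by $T^{-1}$ and $(\transp{T})^{-1}$, inverting, and using $J_{2n}^{-1} = -J_{2n}$. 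The only step requiring more than mechanical manipulation is the converse in (ii), where one has to remember that the identity $e^{tA}e^{tB}=e^{t(A+B)}$ holding for all $t$ forces $[A,B]=0$; every other step is a direct application of the explicit formula \eqref{faudrait_surtout_pas_loublier} together with the symplectic relation on $T$.
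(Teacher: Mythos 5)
Your proof is correct in all three parts. You should be aware, though, that the paper does not actually give its own proof of Proposition \ref{prop_elementary_ham}; it explicitly delegates to \cite{HLW}, where these facts are established in the general (nonlinear) Hamiltonian setting: symplecticity of a Hamiltonian flow, Noether's theorem characterizing commuting flows in terms of vanishing Poisson brackets, and the change-of-variables formula for Hamiltonian flows under symplectomorphisms. Your route is the purely linear-algebraic specialization: everything is reduced, via the explicit formula $\Phi_t^q=e^{-2itJ_{2n}Q}$, to identities for $M\in\mathfrak{sp}_{2n}(\mathbb{C})$ — closure of $\exp$ from $\mathfrak{sp}$ to $\mathrm{Sp}$, the correspondence (up to a nonzero scalar) between $\{q_1,q_2\}$ and $[M_1,M_2]$, the elementary characterization $e^{tA}e^{tB}=e^{t(A+B)}\ \forall t\iff[A,B]=0$ via the $t^2$ coefficient, and the identity $TJ_{2n}\transp{T}=J_{2n}$ for $T\in\mathrm{Sp}_{2n}(\mathbb{C})$. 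This is more elementary and self-contained than invoking HLW, at the cost of not illuminating the connection to the general nonlinear theory (e.g.\ the remark in the paper that (ii) is Noether's theorem, being also equivalent to $q_1\circ\Phi_t^{q_2}=q_1$). One minor hygiene point: when you invoke "the Lie algebra isomorphism" between quadratic forms and $\mathfrak{sp}_{2n}(\mathbb{C})$, your specific map $q\mapsto-2iJ_{2n}Q$ intertwines the brackets only up to a multiplicative constant, as you correctly flag; since you use it only to deduce $\{q_1,q_2\}=0\iff[M_1,M_2]=0$, this is harmless, but it is worth not calling it an \emph{isomorphism} of Lie algebras without the scalar correction.
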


Usually, in this context, the second property is called \emph{Noether's theorem} because it is also equivalent to have $q_1 \circ \Phi_t^{q_2} = q_1$ for all $t\in \mathbb{R}$. Proofs of these properties can be found, for example, in a nonlinear context, in \cite{HLW}.

 The following classical lemma (whose proof is recalled in the subsection \ref{sub_proof_lem_pos} of the Appendix) links naturally the two previous definitions.
\begin{lemma} \label{lem_pos} If $q$ is a quadratic form on $\mathbb{C}^{2n}$ such that $\Re q$ is nonnegative on $\mathbb{R}^{2n}$ then 
$$\forall t\geq 0, \ \Phi_t^q \in \mathrm{Sp}_{2n}^+(\mathbb{C}).$$
\end{lemma}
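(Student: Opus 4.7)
The starting point is that Proposition \ref{prop_elementary_ham} $i)$ already gives $\Phi_t^q \in \mathrm{Sp}_{2n}(\mathbb{C})$ for every $t \in \mathbb{R}$, so only the non-negativity condition in the definition of $\mathrm{Sp}_{2n}^+(\mathbb{C})$ remains to be checked. The plan is therefore to fix $X \in \mathbb{C}^{2n}$ and differentiate
$$
M(t) := \transp{\overline{\Phi_t^q}}(-iJ_{2n})\Phi_t^q - (-iJ_{2n})
$$
with respect to $t$, showing that $\transp{\overline{X}} M'(t) X \geq 0$ on all of $\mathbb{R}$; since $M(0)=0$, integrating from $0$ to $t\ge 0$ will give exactly the required inequality.

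The computation uses the identity \eqref{faudrait_surtout_pas_loublier}, namely $\Phi_t^q = e^{-2itJ_{2n}Q}$ where $Q$ is the (complex) symmetric matrix of $q$. Writing $J=J_{2n}$ for brevity, and using $\transp{J}=-J$, $J^2=-I_{2n}$, together with $\transp{Q}=Q$ (so $\transp{\overline{Q}}=\overline{Q}$), a direct differentiation gives
\begin{align*}
\frac{d}{dt}\bigl[\transp{\overline{\Phi_t^q}}(-iJ)\Phi_t^q\bigr]
&= \transp{\overline{\Phi_t^q}}\bigl[(-2i\overline{Q}J)(-iJ) + (-iJ)(-2iJQ)\bigr]\Phi_t^q \\
&= \transp{\overline{\Phi_t^q}}\bigl[-2\overline{Q}J^2 - 2J^2 Q\bigr]\Phi_t^q
= 4\,\transp{\overline{\Phi_t^q}}\,\Re(Q)\,\Phi_t^q.
\end{align*}

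To conclude, observe that by hypothesis $\Re q \geq 0$ on $\mathbb{R}^{2n}$, which means that the real symmetric matrix $\Re(Q)$ is positive semi-definite on $\mathbb{R}^{2n}$; since a positive semi-definite real symmetric matrix is also positive semi-definite as a Hermitian form on $\mathbb{C}^{2n}$, we get $\transp{\overline{Y}}\Re(Q)Y \geq 0$ for all $Y \in \mathbb{C}^{2n}$. Applying this to $Y=\Phi_s^q X$ yields $\transp{\overline{X}} M'(s) X \geq 0$ for every $s\in\mathbb{R}$, and integrating from $0$ to $t\ge 0$ produces
$$
\transp{\overline{X}} M(t) X = \int_0^t 4\,\transp{\overline{\Phi_s^q X}}\,\Re(Q)\,\Phi_s^q X \,ds \;\in\; \mathbb{R}_+,
$$
which is exactly the non-negativity condition defining $\mathrm{Sp}_{2n}^+(\mathbb{C})$.

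There is no real obstacle here: the proof is a one-line ODE computation plus the elementary passage from positivity on $\mathbb{R}^{2n}$ to Hermitian positivity on $\mathbb{C}^{2n}$ for a real symmetric matrix. The only point requiring a moment of care is keeping track of the signs produced by $J_{2n}^2=-I$ and by transposition of $J_{2n}$, which conspire to make the cross-term a genuine non-negative Hermitian form $4\,\transp{\overline{\Phi_t^q}}\Re(Q)\Phi_t^q$ rather than something with an indefinite sign.
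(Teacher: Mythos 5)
Your proof is correct and follows essentially the same route as the paper: reduce to the nonnegativity condition, differentiate $\transp{\overline{\Phi_t^q}}(-iJ_{2n})\Phi_t^q$ using $\Phi_t^q = e^{-2itJ_{2n}Q}$, obtain the integrand $4\,\transp{\overline{\Phi_s^q}}\,\Re(Q)\,\Phi_s^q$, and conclude from Hermitian positivity of $\Re(Q)$. The only difference is cosmetic: you phrase it through a scalar quantity $\transp{\overline{X}}M(t)X$ rather than directly at the matrix level, but the computation and the key facts are identical.
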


The following theorem, proved by H\"ormander in \cite{Hor95} (Thm 5.12 and Prop 5.9), is the main tool we use to realize exact splittings.
\begin{theorem}[H\"ormander \cite{Hor95}] \label{merci_lars} There exists a map
$$
\mathscr{K}:\mathrm{Sp}_{2n}^+(\mathbb{C}) \to \overline{ \mathcal{B} }/\mathbb{U}_2
$$
where $\mathbb{U}_2 = \{ +1,-1 \}$ and $\mathcal{B}$ is the unit ball of $\mathscr{L}(L^2(\mathbb{R}^n))$, the algebra of the bounded operators on $L^2(\mathbb{R}^n)$, 
which is monoid morphism, i.e.
 $$
 \forall S,T\in \mathrm{Sp}_{2n}^+(\mathbb{C}), \ \mathscr{K}(S T)  = \mathscr{K}(S) \mathscr{K}(T) \quad and \quad \mathscr{K}(I_{2n}) = \pm \mathrm{id}_{L_2}.
$$
 Furthermore, if $q$ is a quadratic form on $\mathbb{C}^{2n}$ such that $\Re q$ is nonnegative on $\mathbb{R}^{2n}$ then
 $$
 \mathscr{K}(\Phi_t^q) = \pm e^{-t q^w}.
 $$
\end{theorem}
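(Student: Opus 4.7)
The plan is to construct $\mathscr{K}$ explicitly via Gaussian Fourier Integral Operators and then verify the semigroup identification. The guiding principle is that to every non-negative complex symplectic $T$ one associates the Lagrangian graph $\Lambda_T = \{(X,TX) : X\in\mathbb{C}^{2n}\} \subset \mathbb{C}^{4n}$, twisted by the symplectic structure $\sigma \oplus (-\sigma)$. The non-negativity assumption on $T$ amounts to positivity of the imaginary part of the generating quadratic form of $\Lambda_T$ (after a suitable splitting into position/momentum variables), and this is exactly the condition under which the FIO with complex quadratic phase function attached to $\Lambda_T$ defines a bounded operator on $L^2(\mathbb{R}^n)$.

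The first step is to write down, locally on a dense open subset of $\mathrm{Sp}_{2n}^+(\mathbb{C})$ where the projection of $\Lambda_T$ on the position variables is non-degenerate, the kernel
\[
K_T(x,y) = c(T)\, \exp\bigl( i \, \Phi_T(x,y) \bigr),
\]
where $\Phi_T$ is a complex quadratic form with $\mathrm{Im}\,\Phi_T \geq 0$ determined by $\Lambda_T$, and $c(T)$ is a prefactor defined up to sign by a square root of a determinant built from the components of $T$. The $\pm$ sign ambiguity in $c(T)$ is unavoidable and accounts for the quotient $/\mathbb{U}_2$: it is the source of the metaplectic double cover. To extend to the whole monoid, one uses that every $T \in \mathrm{Sp}_{2n}^+(\mathbb{C})$ can be reached by composing elements in such generic charts and invokes analytic continuation from the real symplectic group, where the classical metaplectic representation already provides $\mathscr{K}$ uniquely up to sign.

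Next, I would verify the morphism property $\mathscr{K}(ST) = \mathscr{K}(S)\mathscr{K}(T)$ by a stationary/non-stationary phase computation for the composition of two Gaussian FIOs: the resulting kernel is again Gaussian, the phase is read off as the generating function of the composed Lagrangian $\Lambda_S \circ \Lambda_T = \Lambda_{ST}$, and the prefactor is shown by a determinant identity to agree with $c(ST)$ up to sign. The boundedness $\|\mathscr{K}(T)\|_{L^2 \to L^2} \leq 1$ for $T\in\mathrm{Sp}_{2n}^+(\mathbb{C})$ follows from a Schur-type estimate combined with $\mathrm{Im}\,\Phi_T \geq 0$; on the boundary $T \in \mathrm{Sp}_{2n}(\mathbb{R})$ one recovers the usual unitarity.

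Finally, for the identification $\mathscr{K}(\Phi_t^q) = \pm e^{-tq^w}$ when $\mathrm{Re}\,q \geq 0$ on $\mathbb{R}^{2n}$, the strategy is to fix the sign by continuity at $t=0$ (where both sides equal $\mathrm{id}_{L^2}$) and then show that $t \mapsto \mathscr{K}(\Phi_t^q)$ solves the Cauchy problem $\partial_t U = -q^w U$. The derivative at $t=0$ can be computed directly from the Gaussian kernel: differentiating $\Phi_{\Phi_t^q}$ at $t=0$ produces precisely the Weyl symbol of $-q^w$ acting on the delta kernel, via the relation between the generating function of $\Phi_t^q$ and the quadratic symbol $q$. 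Combined with the morphism property $\mathscr{K}(\Phi_{t+s}^q) = \mathscr{K}(\Phi_t^q)\mathscr{K}(\Phi_s^q)$, this yields the semigroup equation and hence the claim. The main obstacle, and the heart of Hörmander's argument, is the careful bookkeeping of the sign/square-root in $c(T)$ to ensure that $\mathscr{K}$ is well-defined on all of $\mathrm{Sp}_{2n}^+(\mathbb{C})$, including on its boundary where $\Lambda_T$ may degenerate and an alternative generating function (using different Lagrangian splittings, i.e. partial Fourier transforms) must be used to cover the singular strata.
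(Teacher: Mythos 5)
The paper does not prove this theorem; it is quoted verbatim as Hörmander's result (\cite{Hor95}, Thm 5.12 and Prop 5.9) and used as a black box, so there is no internal proof to compare against. Evaluated against Hörmander's actual construction, your outline captures the essential shape of the argument: Gaussian FIO kernels attached to the positive twisted-graph Lagrangian of $T$, a sign-ambiguous determinant prefactor forcing the $\mathbb{U}_2$ quotient, composition of Gaussian kernels to get the morphism property, and a Cauchy-problem argument with sign-fixing at $t=0$ for the identification with $e^{-tq^w}$.

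One substantive inaccuracy is the phrase ``invokes analytic continuation from the real symplectic group.'' This has the geometry backwards: $\mathrm{Sp}_{2n}(\mathbb{R})$ sits on the \emph{boundary} of the positive semigroup $\mathrm{Sp}_{2n}^+(\mathbb{C})$, and the Gaussian kernels are constructed directly in the interior (strictly positive $T$, where $\Im\Phi_T>0$ makes the kernel honestly $L^2$); the boundary, including the real group and the degenerate strata, is then reached by limits and by switching to alternative partial Fourier transforms, not by continuing the real metaplectic representation outward. You identify the degenerate-projection issue correctly at the end of your sketch, but the extension mechanism is a limit/chart-change within the complex positive monoid rather than analytic continuation from $\mathrm{Sp}_{2n}(\mathbb{R})$. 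A second, more minor point: the contraction bound $\|\mathscr{K}(T)\|\le 1$ is not a routine Schur estimate; in Hörmander's argument it comes from the positivity structure of the Hermitian form $-i\sigma(\bar{\cdot},\cdot)$ on $\Lambda_T$, which controls the Gaussian integral and degenerates exactly to unitarity on the boundary. With these two clarifications the sketch would be a faithful (if necessarily terse) account of Hörmander's proof.
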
 
In \cite{Hor95}, $\mathscr{K}$ is defined through an explicit but heavy formula that is not relevant for us here. An operator of the form $\mathscr{K}(T)$ with $T\in \mathrm{Sp}_{2n}^+(\mathbb{C})$ is called a \emph{Fourier Integral Operators}. It provides a natural way to transform an exact splitting at the level of the linear ordinary differential equations into an exact splitting at the level of the semigroups generated by quadratic differential operators (up to an argument of continuity to remove the uncertainty of sign).

To the best of our knowledge, the idea to use the Fourier integral operators to get an exact splitting has been introduced by Paul Alphonse and the author in \cite{AB}. We aimed at characterizing the regularizing effects of the semigroups generated by non-selfadjoint quadratic differential operator. Our splitting provides a quite explicit representation of the polar decomposition of this semigroup. Note that it is the decomposition we would get applying Theorem \ref{thm_split_Lie} and Theorem \ref{merci_lars} with $\mathfrak{s}=\{0\}$, $\mathfrak{b}_1 = i\, \mathfrak{sp}_{2n}(\mathbb{R})$ and $\mathfrak{b}_2 = \mathfrak{sp}_{2n}(\mathbb{R})$.

In order to give a corollary of Theorem \ref{merci_lars} well suited to get exact splittings for semigroups generated by inhomogeneous quadratic differential operators, we have to introduce a last elementary notation. If $p: \mathbb{C}^{2n} = \mathbb{C}_{x}^{n}\times \mathbb{C}_{\xi}^{n} \to \mathbb{C}$ is a polynomial of degree $2$ or less that we write in coordinates as
$$
p = \transp{X} Q X + \transp{Y} X + c
$$
where $X=\transp{(x_1,\dots,x_n,\xi_1,\dots,\xi_n)}$, $Q\in \mathrm{S}_{2n}(\mathbb{C})$, $Y\in \mathbb{C}^{2n}$ and $c\in \mathbb{C}$, then $\mathbb{P} p :\mathbb{C}^{2n+2}=\mathbb{C}^{n+1}_{x}\times \mathbb{C}^{n+1}_{\xi} \to \mathbb{C} $ is the complex quadratic form defined by
$$
\mathbb{P} p = \transp{X} Q X +\transp{Y} X  x_{n+1}+  c x_{n+1}^2.
$$
\begin{proposition}
\label{prop_key} Let $p_{1,t},\dots,p_{m+1,t}:\mathbb{C}^{2n} \to \mathbb{C}$ be some polynomials  of degree $2$ or less depending continuously on $t\in [0,t_0]$ for some $t_0>0$, whose real part is uniformly bounded below on $\mathbb{R}^{2n}$ and satisfying
\begin{equation}
\label{the only thing we really have to prove}
\forall t\in [0,t_0],\  \Phi_t^{\mathbb{P} p_{1,t}} \dots   \Phi_t^{\mathbb{P} p_{m,t}} = \Phi_t^{\mathbb{P} p_{m+1,t}}
\end{equation}
then we have
\begin{equation}
\label{prop_split_ex}
\forall t\in [0,t_0],\ e^{-t p_{1,t}^w} \dots e^{-t p_{m,t}^w} = e^{-t p_{m+1,t}^w}.
\end{equation}
\end{proposition}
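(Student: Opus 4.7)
The plan is to lift everything to $\mathbb{C}^{2n+2}$ via the homogenization $\mathbb{P}$, so that the symbols become honest quadratic forms to which Theorem \ref{merci_lars} applies, and then to recover the identity on $L^2(\mathbb{R}^n)$ by recognizing $(\mathbb{P} p)^w$ as a direct integral in $x_{n+1}$ whose fiber at $x_{n+1}=1$ is precisely $p^w$. A preliminary reduction is needed since Theorem \ref{merci_lars} requires $\Re q\geq 0$ on the whole real phase space, while we only have $\Re p_{j,t}$ uniformly bounded below. Choosing $M>0$ with $\Re(p_{j,t}+M)\geq 0$ on $\mathbb{R}^{2n}$ one checks that $\Re\,\mathbb{P}(p_{j,t}+M)\geq 0$ on $\mathbb{R}^{2n+2}$ (for $x_{n+1}\neq 0$ the homogenization factors as $x_{n+1}^2\Re(p_{j,t}+M)(X/x_{n+1})$, and at $x_{n+1}=0$ only the $X$-quadratic part of $\Re p_{j,t}$ survives, which is necessarily nonnegative). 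Since $Mx_{n+1}^2=\mathbb{P}(M)$ depends on no $\xi$-variable, it Poisson-commutes with every $\mathbb{P} p_{j,t}$, so by Proposition~\ref{prop_elementary_ham}(ii), $\Phi_t^{\mathbb{P}(p_{j,t}+M)}=\Phi_t^{Mx_{n+1}^2}\Phi_t^{\mathbb{P} p_{j,t}}$; replacing $p_{j,t}$ by $p_{j,t}+M$ for $j\leq m$ and $p_{m+1,t}$ by $p_{m+1,t}+mM$ preserves \eqref{the only thing we really have to prove} and only introduces a common scalar factor $e^{-tmM}$ on both sides of \eqref{prop_split_ex}. I may thus assume $\Re p_{j,t}\geq 0$.

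Under this normalization Lemma \ref{lem_pos} gives $\Phi_t^{\mathbb{P} p_{j,t}}\in\mathrm{Sp}_{2n+2}^+(\mathbb{C})$, hence Theorem \ref{merci_lars} gives $\mathscr{K}(\Phi_t^{\mathbb{P} p_{j,t}})=\pm e^{-t(\mathbb{P} p_{j,t})^w}$. Applying the monoid morphism $\mathscr{K}$ to \eqref{the only thing we really have to prove} and lifting from $\overline{\mathcal{B}}/\mathbb{U}_2$ to $\overline{\mathcal{B}}$ yields some $\eta_t\in\{+1,-1\}$ with
\begin{equation*}
e^{-t(\mathbb{P} p_{1,t})^w}\cdots e^{-t(\mathbb{P} p_{m,t})^w}=\eta_t\,e^{-t(\mathbb{P} p_{m+1,t})^w}.
\end{equation*}
At $t=0$ both sides equal $\mathrm{id}_{L^2(\mathbb{R}^{n+1})}$, so $\eta_0=+1$; continuous dependence of the $p_{j,t}$ on $t$ makes each semigroup strongly continuous in $t$, so $t\mapsto\eta_t$ is continuous and therefore identically $+1$ on $[0,t_0]$.

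It remains to descend from $L^2(\mathbb{R}^{n+1})$ to $L^2(\mathbb{R}^n)$. Because $\mathbb{P} p$ is independent of $\xi_{n+1}$, the operator $(\mathbb{P} p)^w$ carries no $\partial_{x_{n+1}}$ and commutes with every multiplication by a bounded function of $x_{n+1}$; under the identification $L^2(\mathbb{R}^{n+1})\simeq\int_{\mathbb{R}}^\oplus L^2(\mathbb{R}^n)\,dx_{n+1}$ it decomposes as $\int^\oplus q_\lambda^w\,d\lambda$, with $q_\lambda(x,\xi)=\transp{X}QX+\lambda\transp{Y}X+\lambda^2 c$ (where $Q,Y,c$ are the coefficients of $p$), whose fiber at $\lambda=1$ is $p^w$. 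The decomposition passes to compositions and semigroups, so the identity on $L^2(\mathbb{R}^{n+1})$ obtained in the previous paragraph fiberwise specializes at $\lambda=1$ to the claim, and undoing the initial shift by $M$ yields \eqref{prop_split_ex}. The principal subtlety is the sign fixing in the previous paragraph: Theorem \ref{merci_lars} only determines $e^{-tq^w}$ from $\Phi_t^q$ up to a sign, and it is precisely the continuity hypothesis in $t$ that rules out a global sign flip $\eta_t=-1$.
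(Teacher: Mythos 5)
Your proof follows the paper's skeleton closely---shift by a constant to make $\Re\,\mathbb{P}p_{j,t}\geq 0$, apply Lemma~\ref{lem_pos} and Theorem~\ref{merci_lars} to get the identity on $L^2(\mathbb{R}^{n+1})$ up to sign, fix the sign by a connectedness/continuity argument, and then descend to $L^2(\mathbb{R}^n)$. You differ genuinely in the last step: the paper proves Lemma~\ref{lemma_restri}, showing via H\"ormander's Theorem~4.2 (\cite{Hor95}) that $e^{-t(\mathbb{P}p)^w}$ preserves $\mathcal{S}(\mathbb{R}^{n+1})$, so that the equation $\partial_t\phi=-(\mathbb{P}p)^w\phi$ can be \emph{pointwise} evaluated on $\{x_{n+1}=1\}$; you instead invoke a direct integral decomposition $L^2(\mathbb{R}^{n+1})\simeq\int^\oplus L^2(\mathbb{R}^n)\,d\lambda$ and claim the fiber operators $e^{-tq_\lambda^w}$ agree at $\lambda=1$. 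The idea is sound, but as written there is a small gap: a direct integral decomposition of bounded operators only determines the fibers for \emph{almost every} $\lambda$, so passing to the specific slice $\lambda=1$ requires an explicit continuity statement for $\lambda\mapsto e^{-tq_\lambda^w}$ (for instance via Trotter--Kato or via continuity of H\"ormander's map $\mathscr{K}$). The paper's Schwartz-class route sidesteps this. A second, minor gloss: your sign argument says $t\mapsto\eta_t$ is continuous because the semigroups are; this is true, but uses injectivity of $e^{-t(\mathbb{P}p_{m+1,t})^w}$ (as the paper cites from Theorem~2.1 and Corollary~7.9 of \cite{AB}) to express $\eta_t$ as a continuous ratio---worth stating, since without nonvanishing of the denominator the continuity of a $\pm1$-valued quotient does not follow.
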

The proof is given at the end of this section. Let us mention that this proposition is also a  corollary of the Theorem 2.3 of \cite{Joe}. Note that if the polynomials are homogeneous (i.e. if they are some quadratic forms) then the $\mathbb{P}$ can be removed. Furthermore, as a corollary of the proof, if we do not assume that the polynomials are uniformly bounded below and that the polynomials depend continuously on $t$, then \eqref{prop_split_ex} holds up to an uncertainty of sign.

It may also be interesting to have exact splittings to compute evolution operators generated by \emph{non-autonomous} inhomogeneous quadratic differential operators (like it is done for the non-autonomous linear magnetic Schr\"odinger equations in \cite{Bader}). Here, we chose for conciseness to do not consider the non-autonomous case. Nevertheless, let us mention that it would be possible to generalize Proposition \ref{prop_key} to deal with non-autonomous equations using the generalization of the Theorem \ref{merci_lars} proven in \cite{Karel}. With such a generalization, the exponential of matrices become naturally the solutions of non-autonomous linear ordinary differential equations (which can be studied similarly using Magnus expansions, see \cite{HLW}).

In order to illustrate Proposition \ref{prop_key}, let us give an elementary application (Section \ref{sec_app} being devoted to more sophisticated applications). Indeed, the formula \eqref{magic_formula} used to compute rotations can clearly be extended analytically for $\theta \in -2it$ where $t\in \mathbb{R}$. Since, up to a transposition, this formula can be written as
$$
\Phi_t^{ (\tanh t)/(2t) \  x^2} \Phi_t^{ (\sinh 2t)/(2t) \  \xi^2} \Phi_t^{ (\tanh t)/(2t) \  x^2} = \Phi_t^{|x|^2+|\xi|^2}
$$
we deduce of the Proposition \ref{prop_key}  that the exact splitting \eqref{split_exact_harm_osci} for the harmonic oscillator (presented in the introduction) holds.

Before focusing on the proof of Proposition \ref{prop_key}, let us present the following Lemma that is useful in practice to establish the factorization \eqref{the only thing we really have to prove}. It expresses the \emph{triangular nature} of the equation \eqref{the only thing we really have to prove}. Its proof is postponed in subsection \ref{pippo} of the appendix.
\begin{lemma}
\label{lem_in_fact_its_triangular}
Let $p_{1},\dots,p_{m}$ be some polynomials  of degree $2$ or less on $\mathbb{C}^{2n}$ whose decompositions are 
$$
p_j= q_j +\ell_j+ c_j
$$
where $q_j$ quadratic forms, $\ell_j$ are linear forms and $c_j$ are complex numbers. Then we have
\begin{equation}
\label{the_aff_fact}
\Phi_1^{\mathbb{P} p_{1}} \dots   \Phi_1^{\mathbb{P} p_{m}} = \Phi_1^{\mathbb{P} p_{m+1}}
\end{equation}
if and only if
\begin{equation}
\label{terrible_system}
\left\{ \begin{array}{lll} \Phi_1^{q_{1}} \dots   \Phi_1^{ q_{m}} = \Phi_1^{q_{m+1}} \\
\displaystyle \sum_{j=1}^m \ell_j \circ \big( \Upsilon_{q_j} \prod_{k>j} \Phi_1^{q_{k}} \big) = \ell_{m+1} \circ \Upsilon_{q_{m+1}} \\
\displaystyle \sum_{j=1}^m c_j + \kappa_j + \sigma_j = c_{m+1} + \kappa_{m+1}
\end{array}
\right.
\end{equation}
where, denoting $Q^{(j)}$ the matrix of $q_j$ and $L^{(j)}\in \mathbb{C}^{2n}$ the matrix of $\ell_j$
$$
\Upsilon_{q_j} = \left( \frac{e^z-1}z \right)(-2iJ_{2n} Q^{(j)}),  \ \ \ \kappa_j = \sum_{k\in \mathbb{N}} \frac{4^k}{(2k+3)!} \ q_j( (J_{2n} Q^{(j)})^k J_{2n} \transp L^{(j)}),
$$
$$
 \sigma_j = -\frac{i}2 \sum_{p=1}^{j-1} L^{(p)}\Upsilon_{q_p} \left( \prod_{k=p+1}^{j-1}  \Phi_1^{q_{k}} \right) \Upsilon_{q_j} J_{2n}  \transp L^{(j)}.
$$
\end{lemma}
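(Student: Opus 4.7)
The plan is to verify the equivalence by writing the Hamiltonian flow $\Phi_t^{\mathbb{P} p}$ explicitly on $\mathbb{C}^{2(n+1)}$ and matching both sides of \eqref{the_aff_fact} coordinate by coordinate. The structural observation is that $\mathbb{P} p$ depends on $X = (x_1,\ldots,x_n,\xi_1,\ldots,\xi_n)$ and $x_{n+1}$ but not on $\xi_{n+1}$, so $\dot{x}_{n+1}=0$ and $x_{n+1}\equiv a$ is conserved. Fixing $a$, the $X$-coordinates satisfy the affine equation $\dot X = -2iJ_{2n} Q X - i a J_{2n}\transp L$ (where $L$ is the row vector of $\ell$), whose solution at $t=1$ is
\begin{equation*}
X(1) = \Phi_1^q X(0) - i a\, \Upsilon_q J_{2n} \transp L ,
\end{equation*}
and simultaneously $\dot\xi_{n+1} = i(LX+2ca)$ integrates to
\begin{equation*}
\xi_{n+1}(1)-\xi_{n+1}(0) = i L\,\Upsilon_q X(0) + a\bigl(L\,\phi_2(-2iJ_{2n}Q)J_{2n}\transp L + 2ic\bigr),
\end{equation*}
where $\phi_2(z)=(e^z-1-z)/z^2$.

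The first key computation is to identify $L\,\phi_2(-2iJ_{2n}Q)J_{2n}\transp L$ with $2i\kappa$, where $\kappa$ is as in the statement for a single $p=q+\ell+c$. Expanding $\phi_2$ as a power series in $A = -2iJ_{2n}Q$, the scalar $L(J_{2n}Q)^{2m}J_{2n}\transp L$ equals its own negative under transposition (using $\transp{J_{2n}}=-J_{2n}$), hence vanishes for every $m$; only the odd powers survive. The identities $\transp{[(J_{2n}Q)^k]}=(-1)^k (QJ_{2n})^k$ and $(QJ_{2n})^kQ=Q(J_{2n}Q)^k$ let one rewrite $q((J_{2n}Q)^m J_{2n}\transp L) = (-1)^{m+1} L(J_{2n}Q)^{2m+1} J_{2n} \transp L$, and collecting the remaining powers of $-2i$ reassembles the series into $2i\sum_{m\geq 0}\tfrac{4^m}{(2m+3)!}q((J_{2n}Q)^m J_{2n}\transp L) = 2i\kappa$. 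Thus a single flow acts on the slice $\{x_{n+1}=1\}$ as $(X_0,\xi_{n+1,0})\mapsto(\Phi_1^q X_0 - i\Upsilon_q J_{2n}\transp L,\; \xi_{n+1,0}+iL\Upsilon_q X_0 + 2i(c+\kappa))$.

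The factorization \eqref{the_aff_fact} is then equivalent to the two composite maps agreeing for every $(X_0,a,\xi_{n+1,0})$. The $a=0$ slice reduces the $X$-map to $\prod_j \Phi_1^{q_j}=\Phi_1^{q_{m+1}}$, which is the first line of \eqref{terrible_system}. Setting $a=1$ and iterating the single-flow formula (with composition convention that $\Phi_1^{\mathbb{P}p_1}\cdots\Phi_1^{\mathbb{P}p_m}$ acts right-to-left), induction gives the composite $\xi_{n+1}$-shift
\begin{equation*}
i\sum_{j=1}^{m} L^{(j)}\Upsilon_{q_j}\Bigl(\prod_{k>j}\Phi_1^{q_k}\Bigr)X_0 \;+\; \sum_{p<j}L^{(j)}\Upsilon_{q_j}\Bigl(\prod_{p<k<j}\Phi_1^{q_k}\Bigr)\Upsilon_{q_p}J_{2n}\transp{L^{(p)}} \;+\; 2i\sum_{j=1}^m(c_j+\kappa_j).
\end{equation*}
Matching the $X_0$-linear part against $iL^{(m+1)}\Upsilon_{q_{m+1}}X_0$ produces the second line of \eqref{terrible_system}; matching the constant part against $2i(c_{m+1}+\kappa_{m+1})$, dividing by $2i$, and renaming the double-sum indices $(j,p)\leftrightarrow(p,j)$ produces the third line with exactly the factor $-i/2$ hidden in $\sigma_j$. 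The analogous condition from the constant-in-$X_0$ part of the $X$-map is forced by the symplecticity of $\Phi_1^{\mathbb{P}p}$ (which links the $a$-shift of $X$ to the $X_0$-coefficient of $\xi_{n+1}$ via $\transp{\Upsilon_q}=-J_{2n}\Upsilon_q J_{2n}$), so no independent equation is needed. The main obstacle is the bookkeeping in this last step: the antisymmetry trick that kills the even-power terms in $\kappa$ is clean, but the reindexing of the cross-terms and the verification that the symplectic transpose identities reduce the four naive conditions to the three stated in \eqref{terrible_system} require care.
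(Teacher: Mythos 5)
Your approach is the same as the paper's: both exploit the fact that $x_{n+1}$ is conserved by $\Phi_t^{\mathbb{P}p}$, which makes the flow affine in $(X,\xi_{n+1})$ — you write this directly as a solved ODE, while the paper packages the identical computation as a block-triangular matrix in a permuted basis of $\mathbb{C}^{2n+2}$. The plan is sound and the $\kappa$-identification is correct, but two of your displayed formulas are wrong and the second would derail the redundancy step you flag as delicate. In the composite $\xi_{n+1}$-shift your cross-term reads $\sum_{p<j}L^{(j)}\Upsilon_{q_j}\bigl(\prod_{p<k<j}\Phi_1^{q_k}\bigr)\Upsilon_{q_p}J_{2n}\transp{L^{(p)}}$, but tracking the right-to-left composition carefully yields the reversed pairing $\sum_{p<j}L^{(p)}\Upsilon_{q_p}\bigl(\prod_{p<k<j}\Phi_1^{q_k}\bigr)\Upsilon_{q_j}J_{2n}\transp{L^{(j)}}$: the smaller index must sit on the left, because it arises from $iL^{(p)}\Upsilon_{q_p}$ acting on the accumulated translation, which carries $J_{2n}\transp{L^{(j)}}$ from the earlier (larger-index) factor. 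The ``renaming $(j,p)\leftrightarrow(p,j)$'' you invoke is a change of bound variable, it leaves the sum unchanged, and it cannot turn one expression into the other.

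More substantively, the identity $\transp{\Upsilon_q}=-J_{2n}\Upsilon_qJ_{2n}$ you cite for the redundancy of the fourth condition is false. Writing $\phi_1(z)=(e^z-1)/z$ so that $\Upsilon_q=\phi_1(A)$ with $A=-2iJ_{2n}Q$, one has $J_{2n}AJ_{2n}^{-1}=-\transp{A}$, hence $-J_{2n}\Upsilon_qJ_{2n}=J_{2n}\phi_1(A)J_{2n}^{-1}=\phi_1(-\transp{A})$, whereas $\transp{\Upsilon_q}=\phi_1(\transp{A})$; since $\phi_1$ is not even these do not agree. The identity that does hold is $\transp{\Upsilon_q}=-\transp{(\Phi_1^q)}\,J_{2n}\Upsilon_qJ_{2n}$, which encodes the scalar relation $\phi_1(z)=e^z\phi_1(-z)$ and is exactly the single-factor instance of the symplectic block constraint $B=-J_{2n}\transp{A}^{-1}\transp{C}$ used in the paper. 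If you substitute your version into the check, it will not balance. The efficient repair is the paper's: observe that $B=-J_{2n}\transp{A}^{-1}\transp{C}$ holds for any symplectic matrix with this block structure, apply it once to the full composite, and conclude that matching the $A$ and $C$ blocks (your first and second equations) forces the $B$ blocks to match — no factor-by-factor manipulation of $\transp{\Upsilon_q}$ is required.
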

In this lemma, it is relevant to note that if $q_1,\dots,q_{m+1}$ satisfy the first equation of \eqref{terrible_system} then the second equation is just a linear system with respect to $\ell_1,\dots,\ell_{m+1}$ and that if the two first equations are satisfied then the last equation is linear with respect to $c_1,\dots,c_{m+1}$.

In order to prove Proposition \ref{prop_key}, we introduce some technical lemmas that will be crucial.

\begin{lemma}
\label{lem_struc_pos} If $p:\mathbb{R}^{2n} \to \mathbb{R}$ is a real polynomial of degree two or less being bounded below then there exists a nonnegative real quadratic form $q:\mathbb{R}^{2n}\to \mathbb{R}_+$, $Y\in \mathbb{R}^{2n}$ and $c\in \mathbb{R}$ such that $p$ can be written as
\begin{equation}
\label{the good form}
p(X) = q(X-Y) + c
\end{equation}
where $X=(x_1,\dots,x_n,\xi_1,\dots,\xi_n)$.
\end{lemma}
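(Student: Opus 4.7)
The plan is a completion-of-the-square argument, where the content lies in extracting the right algebraic consequences of the lower-boundedness hypothesis.

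First I would write $p$ in coordinates as
\begin{equation*}
p(X) = \transp{X} Q X + \transp{L} X + c_0,
\end{equation*}
where $Q\in \mathrm{S}_{2n}(\mathbb{R})$, $L\in \mathbb{R}^{2n}$, $c_0\in \mathbb{R}$. The target form $p(X)=q(X-Y)+c$ with $q(X)=\transp{X}QX$ expands to $\transp{X}QX - 2\transp{(QY)}X + \transp{Y}QY+c$, so the existence of $Y$ and $c$ is equivalent to the solvability of the linear system $QY = -L/2$ (together with the definition $c:=c_0-\transp{Y}QY$). The whole statement therefore reduces to two facts about $(Q,L)$ forced by the lower bound on $p$: (i) $Q$ is nonnegative, and (ii) $L\in \mathrm{Im}\,Q$.

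For (i), if $v\in \mathbb{R}^{2n}$ satisfied $\transp{v}Qv<0$, then the polynomial $t\mapsto p(tv) = t^2\transp{v}Qv + t\transp{L}v + c_0$ would be a degree-two polynomial with negative leading coefficient, hence unbounded below, contradicting the hypothesis. So $\transp{v}Qv\geq 0$ for every $v\in \mathbb{R}^{2n}$, i.e.\ $q(X):=\transp{X}QX$ is a nonnegative quadratic form on $\mathbb{R}^{2n}$. For (ii), use that $Q$ is symmetric so $\mathbb{R}^{2n} = \ker Q \oplus \mathrm{Im}\,Q$ (an orthogonal decomposition). Write $L=L_1+L_2$ accordingly with $L_1\in \mathrm{Im}\,Q$ and $L_2\in \ker Q$. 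For any $t\in \mathbb{R}$, testing on the line $tL_2$ gives
\begin{equation*}
p(tL_2) \;=\; t^2\,\transp{L_2}QL_2 + t\,\transp{L}L_2 + c_0 \;=\; t\,|L_2|^2 + c_0,
\end{equation*}
since $QL_2=0$ and $L_1\perp L_2$. If $L_2\neq 0$ this is a non-constant affine function of $t$, hence unbounded below, again a contradiction. Therefore $L_2=0$ and $L\in \mathrm{Im}\,Q$.

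The rest is routine: pick any $Y\in \mathbb{R}^{2n}$ with $QY=-L/2$ (which exists by (ii)), set $c:=c_0-\transp{Y}QY\in \mathbb{R}$, and define $q(X)=\transp{X}QX$; by (i) $q$ is nonnegative on $\mathbb{R}^{2n}$, and by construction $p(X)=q(X-Y)+c$. I do not expect any genuine obstacle — the only subtle point is step (ii), the argument that $L$ has no component in $\ker Q$, which relies on the fact that an affine function on a one-dimensional line is bounded below only when it is constant.
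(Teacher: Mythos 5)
Your proof is correct and takes essentially the same approach as the paper: expand $p$ in coordinates, observe that completing the square requires $L\in\mathrm{Im}\,Q$, and derive this by testing $p$ on the line through a vector in $\ker Q = (\mathrm{Im}\,Q)^\perp$, where lower-boundedness forces the affine term to vanish. The only difference is that you spell out step (i), the nonnegativity of $Q$, which the paper leaves implicit.
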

\begin{proof}[Proof of Lemma \ref{lem_struc_pos}]
Naturally $p$ can be written in coordinates as
$$
p(X) = \transp{X}QX+\transp{Z}X + p(0)
$$
where $Q\in S_{2n}(\mathbb{R})$ is the matrix of $q$ and $Z \in \mathbb{R}^{2n}$. Now realizing a canonical factorization, we observe that it is equivalent to prove that $p$ can be written as \eqref{the good form} and that $Z\in \mathrm{Im}\ Q$. 
In other words, we just have to prove that
$$\forall X_0 \in \mathrm{Im} \ Q^{\perp}, \ \transp{X_0} Z =0.$$
But since if $X_0\in \mathrm{Im}\ Q^{\perp}$ we have
$$
p(\lambda X_0) = \lambda \transp{X_0} Z + p(0)
$$
and we know, by assumption, that $\lambda \mapsto p(\lambda X_0)$ is bounded below then we deduce that $\transp{ X_0} Z=0$, which conclude this proof.
\end{proof}
\begin{corollary} 
\label{cor_proof_proj_pos}
If $p:\mathbb{R}^{2n} \to \mathbb{R}$ is a real polynomial of degree two or less being bounded below then we have
$$
\mathbb{P}(p-\inf_{\mathbb{R}^{2n}} p) \geq 0
$$
\end{corollary}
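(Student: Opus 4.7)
\emph{Proof plan.} The plan is to reduce everything to Lemma \ref{lem_struc_pos} by a short direct calculation. First I would apply that lemma to write
$$p(X) = q(X - Y) + c$$
with $q : \mathbb{R}^{2n}\to \mathbb{R}_+$ a nonnegative real quadratic form, $Y \in \mathbb{R}^{2n}$ and $c \in \mathbb{R}$. Because $q \geq 0$ and $q(0) = 0$, the infimum of $p$ is attained at $X = Y$ and equals $c$, so $p - \inf_{\mathbb{R}^{2n}} p = q(\,\cdot\, - Y)$.

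Next, writing $Q$ for the (real symmetric nonnegative) matrix of $q$, I would expand $q(X-Y) = \transp{X}QX - 2\transp{Y}QX + \transp{Y}QY$. In the template $\transp{X}QX + \transp{\widetilde Y}X + \widetilde c$ used to define $\mathbb{P}$, this identifies the linear coefficient as $\widetilde Y = -2QY$ and the constant as $\widetilde c = \transp{Y}QY$. Substituting into the definition of $\mathbb{P}$ should then give
$$\mathbb{P}\bigl(p - \textstyle\inf_{\mathbb{R}^{2n}} p\bigr)(X, x_{n+1}, \xi_{n+1}) = \transp{X}QX - 2x_{n+1}\,\transp{Y}QX + x_{n+1}^2\,\transp{Y}QY = q\bigl(X - x_{n+1}Y\bigr),$$
which is manifestly nonnegative on $\mathbb{R}^{2n+2}$. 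The fact that $\mathbb{P}p$ does not depend on the auxiliary variable $\xi_{n+1}$ is harmless: nonnegativity on the slice $\{\xi_{n+1}=0\}$ is equivalent to nonnegativity on the whole space.

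I do not expect any real obstacle here, since Lemma \ref{lem_struc_pos} does all the structural work and the rest is bookkeeping in coordinates. The only conceptual point worth highlighting is that the homogenization $\mathbb{P}$ is designed precisely so that the affine translation $X\mapsto X - Y$ used to exhibit the minimum of $p$ lifts to the linear translation $X \mapsto X - x_{n+1}Y$ inside $\mathbb{P} p$, and this linear translation automatically preserves nonnegativity of $q$.
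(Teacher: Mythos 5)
Your proposal is correct and coincides with the paper's own argument: both invoke Lemma~\ref{lem_struc_pos} to write $p(X)=q(X-Y)+c$ with $q\ge 0$ and $c=\inf p$, and then observe that $\mathbb{P}(p-c)=q(X-x_{n+1}Y)\ge 0$. The only difference is that you carry out the coefficient bookkeeping explicitly, which the paper leaves implicit.
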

\begin{proof}[Proof of Corollary \ref{cor_proof_proj_pos}]
Applying Lemma \ref{lem_struc_pos}, $p$ can be written as
$$
p(X) = q(X-Y) + c.
$$
So, by definition, we have 
$$
\mathbb{P}(p-c)= q(X-x_{n+1}Y). 
$$
Thus, since $q$ is nonnegative, $\mathbb{P}(p-c)$ is also nonnegative and $c$ is the infimum of $p$.
\end{proof}

\begin{lemma} 
\label{lemma_restri}
If $\psi \in \mathcal{S}(\mathbb{R}^{n+1})$ and $p$ is a polynomial of degree $2$ or less on $\mathbb{C}^{2n}$ whose real part is nonnegative on $\mathbb{R}^{2n}$ then $\Re \mathbb{P}p$ is nonnegative on $\mathbb{R}^{2n+2}$,  $e^{-(\mathbb{P}p)^w} \psi \in  \mathcal{S}(\mathbb{R}^{n+1})$ and we have
$$
(e^{-(\mathbb{P}p)^w} \psi)_{| \mathbb{R}^n \times \{1\}} = e^{- p^w} \psi_{| \mathbb{R}^n \times \{1\}}
$$
\end{lemma}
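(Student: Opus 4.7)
The proof naturally decomposes into three assertions to be handled in turn: nonnegativity of $\Re\mathbb{P}p$, Schwartz preservation of $e^{-(\mathbb{P}p)^w}$, and the restriction formula. For the first, I use the $\mathbb{R}$-linearity of the map $p \mapsto \mathbb{P}p$ on polynomials of degree at most $2$, which gives $\Re\mathbb{P}p = \mathbb{P}(\Re p)$. Since $\Re p \geq 0$, in particular $\inf_{\mathbb{R}^{2n}}\Re p \geq 0$, so Corollary \ref{cor_proof_proj_pos} yields $\mathbb{P}(\Re p - \inf\Re p) \geq 0$, and adding the nonnegative term $\mathbb{P}(\inf\Re p) = (\inf\Re p)\,x_{n+1}^2$ produces $\Re\mathbb{P}p \geq 0$ on $\mathbb{R}^{2n+2}$. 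By Proposition \ref{prop_semigroup_exists}, this makes $-(\mathbb{P}p)^w$ the generator of a $C_0$-semigroup on $L^2(\mathbb{R}^{n+1})$; since $\mathbb{P}p$ is a homogeneous quadratic form, Theorem \ref{merci_lars} furnishes $e^{-(\mathbb{P}p)^w} = \pm\mathscr{K}(\Phi_1^{\mathbb{P}p})$ as a Fourier integral operator with Gaussian kernel, which maps $\mathcal{S}(\mathbb{R}^{n+1})$ into itself.

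The restriction identity is the substantive step. Writing $p = q + \ell + c$ with $q$ a quadratic form, $\ell$ a linear form, and $c$ a constant, the symbol $\mathbb{P}p(X, x_{n+1}, \xi_{n+1}) = q(X) + \ell(X)\,x_{n+1} + c\,x_{n+1}^2$ is independent of $\xi_{n+1}$. A direct Weyl-calculus computation (the $\xi_{n+1}$-integration producing a Dirac mass in $x_{n+1}$) then shows, for every $\varphi \in \mathcal{S}(\mathbb{R}^{n+1})$,
$$
[(\mathbb{P}p)^w \varphi](x, y) = [(p_y)^w \varphi(\cdot, y)](x), \qquad y := x_{n+1},\ p_y := q + y\ell + y^2 c.
$$
Each $p_y$ has nonnegative real part on $\mathbb{R}^{2n}$ (since $\Re p_y(X) = y^2 \Re p(X/y)$ for $y \neq 0$, while $\Re p_0 = \Re q \geq 0$ because the quadratic part of a bounded-below quadratic polynomial must be positive semidefinite), so $-(p_y)^w$ generates a $C_0$-semigroup on $L^2(\mathbb{R}^n)$. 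The plan is then to define
$$
v(t, x, y) := [e^{-t(p_y)^w}\psi(\cdot, y)](x),
$$
to verify $v(0) = \psi$ and that $v$ solves $\partial_t v = -(\mathbb{P}p)^w v$ in $L^2(\mathbb{R}^{n+1})$ (the spatial action of $(\mathbb{P}p)^w$ on $v$ reducing via the displayed identity to $(p_y)^w v(t, \cdot, y)$ pointwise in $y$), and to invoke uniqueness of the Cauchy problem for the semigroup $e^{-t(\mathbb{P}p)^w}$ to conclude $v(t) = e^{-t(\mathbb{P}p)^w}\psi$. Evaluating at $y = 1$ gives $p_1 = p$, hence the restriction formula.

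The main obstacle is verifying that $v(t, \cdot, \cdot)$ lies in $L^2(\mathbb{R}^{n+1})$ (a fortiori in the domain of $(\mathbb{P}p)^w$) with sufficient continuity in $t$ for the Cauchy problem argument to apply. This requires continuous dependence of the operator $e^{-t(p_y)^w}\chi$ on the parameter $y$, which can be extracted from the Mehler-type integral kernel of inhomogeneous quadratic semigroups, whose parameters depend polynomially on the coefficients of $p_y$. Combined with the Schwartz decay of $\psi$ in both $x$ and $y$, this ensures joint regularity of $v$ and legitimizes the uniqueness step.
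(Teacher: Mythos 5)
Your first two assertions are handled correctly and in essentially the same spirit as the paper: the identity $\Re\mathbb{P}p=\mathbb{P}(\Re p)$ plus Corollary~\ref{cor_proof_proj_pos} gives nonnegativity, and an appeal to H\"ormander for Schwartz preservation (the paper cites Theorem~4.2 of \cite{Hor95} rather than the FIO representation of Theorem~\ref{merci_lars}, but both work). You also correctly identify the key structural fact driving the whole lemma: $\mathbb{P}p$ is independent of $\xi_{n+1}$, so $(\mathbb{P}p)^w=q^w+x_{n+1}\ell^w+cx_{n+1}^2$ acts fiberwise in $x_{n+1}$.

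Where you diverge is in the direction of the argument for the restriction formula, and this is where a genuine gap appears. The paper works \emph{downward}: it starts from $\phi(t):=e^{-t(\mathbb{P}p)^w}\psi$, cites H\"ormander's Theorem~4.2 to say that $\phi\in\mathcal{C}^\infty(\mathbb{R}^+_t\times\mathbb{R}^{n+1})$ is Schwartz in $x$ and solves $\partial_t\phi=-(\mathbb{P}p)^w\phi$ classically, then simply evaluates this PDE at $x_{n+1}=1$ to see that $\rho:=\phi|_{x_{n+1}=1}$ solves $\partial_t\rho=-p^w\rho$ with Schwartz data, and invokes uniqueness for the lower-dimensional semigroup. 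You work \emph{upward}: you construct a candidate $v(t,x,y)=e^{-t(p_y)^w}\psi(\cdot,y)(x)$ from the family of fiberwise semigroups and try to show $v=e^{-t(\mathbb{P}p)^w}\psi$ by uniqueness in dimension $n+1$. That forces you to establish, from scratch, that $v(t,\cdot,\cdot)\in L^2(\mathbb{R}^{n+1})$, that it lies in the domain of $(\mathbb{P}p)^w$, and that $t\mapsto v(t)$ is $C^1$ with the right derivative; as you yourself note, this requires uniform-in-$y$ Schwartz-type bounds and continuity of $y\mapsto e^{-t(p_y)^w}$, which you only gesture at via Mehler kernels. Those estimates are genuinely delicate for non-self-adjoint quadratic operators (they are one of the technical achievements of \cite{Hor95}), and without them your uniqueness step does not close. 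The fix is cheap: reverse the argument and exploit the regularity of the $(n+1)$-dimensional semigroup, which H\"ormander already gives you, rather than trying to rebuild it from the fiberwise pieces.
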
 

\begin{proof}[Proof of Lemma \ref{lemma_restri}]

First, observe that Corollary \ref{cor_proof_proj_pos} proves directly that $\Re\mathbb{P}p$ is nonnegative. Consequently, the semigroup generated by $-(\mathbb{P}p)^w$ is well defined (see e.g. Proposition \ref{prop_semigroup_exists}).

Applying the Theorem 4.2 of \cite{Hor95}, we know that 
$$
\forall t\in [0,1], \ \phi(t)=e^{-t(\mathbb{P}p)^w} \psi \in  \mathcal{S}(\mathbb{R}^{n+1})
$$
and that $\phi \in \mathcal{C}^{\infty}(\mathbb{R}^+_t\times \mathbb{R}^{n+1})$ and satisfies 
$$
\forall t\in \mathbb{R}_+, \ \partial_t \phi = -(\mathbb{P}p)^w  \phi.
$$
Now decomposing $p$ by homogeneity as
\begin{equation*}
p = q + \ell + c
\end{equation*}
where $q:\mathbb{R}^{2n} \to \mathbb{C}$ is a complex valued quadratic form, $\ell:\mathbb{R}^{2n} \to \mathbb{C}$ is a complex valued linear form and $c\in \mathbb{C}$, we have
$$
\forall t\in \mathbb{R}^+, \ \partial_t \phi = -(q^w + x_{n+1} \ell^w + cx_{n+1}^2)   \phi.
$$
Since here we deal with smooth functions, this relation can be evaluated in $x_{n+1}=1$. Thus we get
$$
\forall t\in \mathbb{R}^+, \ \partial_t \rho = -(q^w +  \ell^w + c)  \rho = -p^w \rho
$$ 
where $\rho = \phi_{| \mathbb{R}^+_t\times \mathbb{R}^{n}\times \{1\}}$. Consequently by definition of the semigroup $\exp(-tp^w)$, we have
$$
\rho = \exp(-tp^w) \rho(0,\cdot) =\exp(-tp^w) \psi_{|\mathbb{R}^{n}\times \{1\}}.
$$
Finally, evaluating this last relation for $t=1$, we get the desired relation.
\end{proof}

Using these two lemmas, we give the following proof of the Proposition \ref{prop_key}.

\begin{proof}[Proof of Proposition \ref{prop_key}]
In order to obtain a factorization of the semigroup using Theorem \ref{merci_lars}, we have to deal with quadratic forms having a nonnegative real part. So, since the polynomials are uniformly bounded below, we apply Corollary \ref{cor_proof_proj_pos} to get a constant $c>0$ such that
\begin{equation}
\label{obs1}
\forall j\in \llbracket 1,m+1\rrbracket,\forall t\in  [0,t_0],  \ \Re \ \mathbb{P} (p_{j,t} + c) \geq 0.
\end{equation}

Since the real part of $\mathbb{P} (p_{j,t}+c) = \mathbb{P} p_{j,t} + c x_{n+1}^2$ is nonnegative, we know by lemma \ref{lem_pos} that $$
\forall j\in \llbracket 1,m+1\rrbracket,\forall t\in  [0,t_0], \ \Phi_t^{\mathbb{P} p_{j,t}+cx_{n+1}^2} \in \mathrm{Sp}_{2n+2}^+(\mathbb{C}).
$$
Then observing that, by construction $\mathbb{P} p_{j,t}$ does not depend on $\xi_{n+1}$, it commutes with $x_{n+1}^2$, i.e.
$$
\{ \mathbb{P} p_{j,t} , x_{n+1}^2\} = 0.
$$
where $\{ \cdot,\cdot\}$ stands for the usual Poisson bracket associated with the canonical symplectic form on $\mathbb{R}_x^{n+1}\times \mathbb{R}_{\xi}^{n+1}$.
Consequently, applying the Noether theorem\footnote{note that here it could be proven more elementarily, applying Lemma \ref{lem_in_fact_its_triangular}.}, their Hamiltonian flows commute, i.e. for all $j\in \llbracket 1,m\rrbracket$ and all $t\in  [0,t_0]$ we have
$$
 \Phi_t^{\mathbb{P} p_{j,t}+cx_{n+1}^2} = \Phi_t^{\mathbb{P} p_{j,t}} \Phi_t^{cx_{n+1}^2} \quad \textrm{and} \quad \Phi_t^{\mathbb{P} p_{m+1,t}+cmx_{n+1}^2} = \Phi_t^{\mathbb{P} p_{m+1,t}} \Phi_t^{cmx_{n+1}^2},
$$
and we have
$$
\forall t\in  [0,t_0], \ \Phi_t^{\mathbb{P} p_{1,t} + c x_{n+1}^2} \dots   \Phi_t^{\mathbb{P} p_{m,t}+ c x_{n+1}^2} = \Phi_t^{\mathbb{P} p_t + m c x_{n+1}^2}
$$

Now, we can apply the Theorem \ref{merci_lars} to deduce that for all $t\in  [0,t_0]$, there exists $\varepsilon_t \in \{+1,-1\}$ such that
$$
\forall t\in [0,t_0],\ e^{-t (\mathbb{P}p_{1,t})^w - t c x_{n+1}^2} \dots e^{-t (\mathbb{P}p_{m,t})^w - t c x_{n+1}^2} = \varepsilon_t e^{-t (\mathbb{P} p_{m+1,t})^w - t cm  x_{n+1}^2}.
$$

To get the desired factorization, we have to check that $\varepsilon_t=1$ and to prove that this relation can be evaluated at $x_{n+1}=1$. 

First, we focus on the sign and we observe that if $t=0$ all the exponentials are equal to the identity so we have $\varepsilon_0 =1$. Thus, since $[0,t_0]$ is connected, we just have to prove that $t\mapsto \varepsilon_t$ is continuous to deduce that $\varepsilon_t=1$ for all $t\in [0,t_0]$. Let $\phi\in \mathcal{S}(\mathbb{R}^n)\setminus \{0\}$ be a Schwarz function on $\mathbb{R}^n$ non identically equals to zero (for example a gaussian). Since $e^{-t (\mathbb{P} p_t)^w - t cm  x_{n+1}^2}$ is injective (see Theorem 2.1 and Corollary 7.9 of \cite{AB}), we now that 
$$
\forall t\in [0,t_0], \ \| e^{-t (\mathbb{P} p_{m+1,t})^w - t cm  x_{n+1}^2} \phi \|_{L^2} > 0.
$$
So we have
$$
\varepsilon_t = \langle \prod_{j=1}^m e^{-t (\mathbb{P}p_{j,t})^w - t c x_{n+1}^2} \phi, e^{-t (\mathbb{P} p_{m+1,t})^w - t cm  x_{n+1}^2} \phi \rangle_{L^2} \| e^{-t (\mathbb{P} p_{m+1,t})^w - t cm  x_{n+1}^2} \phi \|_{L^2}^{-2}.
$$
But it follows for the Theorem 4.2 of \cite{Hor95} that $t\mapsto e^{-t (\mathbb{P}p_{1,t})^w - t c x_{n+1}^2} \dots e^{-t (\mathbb{P}p_{m,t})^w - t c x_{n+1}^2} \phi$ and $t\mapsto e^{-t (\mathbb{P} p_{m+1,t})^w - t cm  x_{n+1}^2} \phi$ are continuous from $[0,t_0]$ to $\mathcal{S}(\mathbb{R}^n)$. Consequently, $t\mapsto \varepsilon_t $ is continuous as product of two continuous functions and we have $\varepsilon_t\equiv 1$, i.e.
\begin{equation}
\label{almost_good}
\forall t\in [0,t_0],\ e^{-t (\mathbb{P}p_{1,t})^w - t c x_{n+1}^2} \dots e^{-t (\mathbb{P}p_{m,t})^w - t c x_{n+1}^2} = e^{-t (\mathbb{P} p_{m+1,t})^w - t cm  x_{n+1}^2}.
\end{equation}

To conclude, we just have to prove that this relation can be evaluated in $x_{n+1}=1$. 

Let $\phi \in \mathcal{S}(\mathbb{R}^n)$ and choose $\psi \in  \mathcal{S}(\mathbb{R}^{n+1})$ such that
$$
\psi_{| \mathbb{R}^n \times \{1\}} = \phi.
$$
Applying $m+1$ times Lemma \ref{lemma_restri} to \eqref{almost_good}, we deduce that
$$
\forall t\in [0,t_0],\ e^{-t (p_{1,t})^w - t c } \dots e^{-t (p_{m,t})^w - t c }\phi = e^{-t  p_{m+1,t} ^w - t cm  }\phi.
$$
Thus, since it is clear that for all $t\in [0,t_0]$ and all $ j\in \llbracket 1,m\rrbracket$ we have
$$
 e^{-t (p_{j,t})^w - t c } =e^{-t (p_{j,t})^w} e^{- t c } \quad \textrm{and} \quad e^{-t (p_{m+1,t})^w - t mc } =e^{-t (p_{m+1,t})^w} e^{- t mc }
$$
we deduce that
$$
\forall t\in [0,t_0], \ e^{-t (p_{1,t})^w } \dots e^{-t (p_{m,t})^w }\phi = e^{-t  p_{m+1,t} ^w }\phi.
$$
Finally, since the semigroups are continuous on $L^2(\mathbb{R}^n)$ and that the previous relation holds for all $\phi\in \mathcal{S}(\mathbb{R}^n)$ that is dense in $L^2(\mathbb{R}^n)$, we get the desired factorization.
\end{proof}

\section{Applications}
In this section, we apply the results of the previous sections in order to get some efficient exact splitting methods for the magnetic linear Schr\"odinger equations with quadratic potentials, some transport equations and some Fokker--Planck equations.  Another paper \cite{JCL}, written in collaboration with Nicolas Crouseilles and Yingzhe Li, deals with the implementation of these methods and compares numerically their efficiency, their accuracy and their qualitative properties with respect to the existing methods. 

\label{sec_app}

\subsection{Transport equations}
\label{sub_transp} As we have seen through the formula \eqref{magic_formula} two dimensional rotations can be computed efficiently as products of shear transforms.

In fact, this kind of factorization is much more general since as a classical application of the Gaussian elimination algorithm, we know that each matrix of determinant $1$ is a product of \emph{shear matrices} \footnote{also called \emph{transvection matrices}.} (see e.g. Lemma 8.7 of \cite{Rotman_book}). Identifying as usual matrices with linear maps on $\mathbb{R}^n$, this factorization is written as follows.
\begin{proposition} \label{prop_sluniv} For all $G \in \mathrm{SL}_n(\mathbb{R})$, there exists $m\geq 0$, $\alpha \in \mathbb{R}^m,\ k,\ell\in \llbracket 1,n \rrbracket^m$ such that for all $j\in \llbracket 1,m \rrbracket$, $k_j\neq \ell_j$ and
$$
\forall u\in L^2(\mathbb{R}^n), \ u\circ G =  \exp(\alpha_1 x_{k_1}\partial_{x_{\ell_1}}) \cdots \, \exp(\alpha_m x_{k_m}\partial_{x_{\ell_m}}) u
$$
\end{proposition}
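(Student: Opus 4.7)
The plan is to combine the classical fact that every $G \in \mathrm{SL}_n(\mathbb{R})$ is a product of elementary shear matrices (the reference \cite{Rotman_book}, Lemma~8.7 cited in the statement) with the characteristics formula \eqref{charac_formula}, and then to do a small bookkeeping on the order of composition.

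First, I would invoke Lemma~8.7 of \cite{Rotman_book} to write $G = S_m S_{m-1} \cdots S_1$, where each $S_j = I_n + \alpha_j E_{\ell_j, k_j}$ with $\ell_j \neq k_j$, and $E_{\ell_j, k_j}$ denotes the elementary matrix whose only nonzero coefficient is a $1$ in position $(\ell_j, k_j)$. Since $\ell_j \neq k_j$ we have $E_{\ell_j, k_j}^2 = 0$, so $S_j = \exp(\alpha_j E_{\ell_j, k_j})$.

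Next, for each $j$ I would apply \eqref{charac_formula} to $B = \alpha_j E_{\ell_j, k_j}$. A direct computation gives the vector field identity $(Bx)\cdot \nabla = \alpha_j x_{k_j} \partial_{x_{\ell_j}}$, whence
$$
\exp(\alpha_j x_{k_j} \partial_{x_{\ell_j}}) u = u \circ S_j.
$$
Iterating this identity together with the elementary relation $(v \circ A) \circ B = v \circ (AB)$ yields
$$
\exp(\alpha_1 x_{k_1}\partial_{x_{\ell_1}}) \cdots \exp(\alpha_m x_{k_m}\partial_{x_{\ell_m}}) u = u \circ (S_m S_{m-1} \cdots S_1) = u \circ G,
$$
which is the stated factorization.

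There is no real obstacle here: the argument is essentially the translation of Gaussian elimination into the language of transport operators. The only subtlety is that left composition by the operators reverses the order of matrix multiplication, so the decomposition of $G$ must be taken in the order opposite to the targeted product of exponentials; this is harmless since we are free to choose any shear factorization of $G$.
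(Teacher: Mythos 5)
Your proof is correct and follows exactly the approach the paper intends: the paper states Proposition~\ref{prop_sluniv} as an immediate consequence of the shear factorization from Lemma~8.7 of \cite{Rotman_book} combined with the characteristics formula~\eqref{charac_formula}, without spelling out the details. Your careful handling of the order reversal (composing operators on the left reverses the matrix product) is the right bookkeeping and, as you note, is harmless since the factorization of $G$ may be chosen freely.
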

Note that in the context of the exact splitting, it is more natural to apply this result for $G = \exp(t B)$ where $B\in \mathfrak{sl}_n({\mathbb{R}})$ is a matrix whose trace vanishes.

If we focus on transforms associated with matrices of determinant different than $1$, we have to deal with one dimensional dilatations. Indeed, it is clear that each matrix $G\in  \mathrm{GL}_n(\mathbb{R})$ can be factorized as a product of a matrix in $\mathrm{SL}_n(\mathbb{R})$ and the diagonal matrix $\mathrm{diag}(1,\dots,1,\det G)$. The following proposition provides a way to deal with positive dilatations with pseudo-spectral methods.

\begin{proposition} 
\label{prop_dilat}
For all $\lambda>0$, we have
$$
\forall u\in L^2(\mathbb{R}), \ u (\lambda \ \cdot) =  \lambda^{-1/2} e^{ -i \varepsilon_\lambda \alpha_\lambda  \partial_x^2}  e^{-i \beta_\lambda  x^2} e^{i \varepsilon_\lambda \beta_\lambda \partial_x^2} e^{i \alpha_\lambda  x^2}u,
$$
where $\alpha_\lambda=\frac12 \sqrt{|\lambda^{-1} - 1|\lambda^{-1}}$, $\beta_\lambda=\frac12 \sqrt{|1-\lambda|}$, $\varepsilon_\lambda = 1$ if $\lambda \leq 1$ and $\varepsilon_\lambda = -1$ else.
\end{proposition}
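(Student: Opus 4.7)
The plan is to recognize the dilatation $u\mapsto u(\lambda\,\cdot)$, up to the normalization factor $\lambda^{1/2}$, as the Fourier integral operator associated with the symplectic matrix $T_\lambda=\mathrm{diag}(\lambda^{-1},\lambda)\in\mathrm{Sp}_2(\mathbb{R})$, to factor $T_\lambda$ at the symplectic level as an ordered product of four shear matrices obtained as Hamiltonian flows of purely imaginary multiples of $x^2$ and $\xi^2$, and then to transport this factorization to the operator level via the classical--quantum correspondence of Section~\ref{sec_FIO}.

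I would first introduce the five quadratic forms on $\mathbb{C}^{2}$
\begin{equation*}
q_1=-i\alpha_\lambda x^2,\ \ q_2=i\varepsilon_\lambda\beta_\lambda\xi^2,\ \ q_3=i\beta_\lambda x^2,\ \ q_4=-i\varepsilon_\lambda\alpha_\lambda\xi^2,\ \ q_5=-i(\ln\lambda)\,x\xi,
\end{equation*}
all purely imaginary, so $\Re q_j\equiv 0\geq 0$ on $\mathbb{R}^{2}$. Using \eqref{faudrait_surtout_pas_loublier} one obtains the explicit shear matrices
$$\Phi_1^{q_1}=\begin{pmatrix}1 & 0\\ 2\alpha_\lambda & 1\end{pmatrix},\ \Phi_1^{q_2}=\begin{pmatrix}1 & 2\varepsilon_\lambda\beta_\lambda\\ 0 & 1\end{pmatrix},\ \Phi_1^{q_3}=\begin{pmatrix}1 & 0\\ -2\beta_\lambda & 1\end{pmatrix},\ \Phi_1^{q_4}=\begin{pmatrix}1 & -2\varepsilon_\lambda\alpha_\lambda\\ 0 & 1\end{pmatrix},$$
and, after the short computation $-2iJ_2Q_5=\mathrm{diag}(-\ln\lambda,\ln\lambda)$, also $\Phi_1^{q_5}=T_\lambda$. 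The core algebraic step is then the verification of the symplectic factorization
\begin{equation*}
\Phi_1^{q_4}\Phi_1^{q_3}\Phi_1^{q_2}\Phi_1^{q_1}=T_\lambda,
\end{equation*}
which I would check by a direct matrix multiplication using the uniform identities $4\varepsilon_\lambda\beta_\lambda^2=1-\lambda$, $4\varepsilon_\lambda\alpha_\lambda^2=(1-\lambda)/\lambda^2$ and $\varepsilon_\lambda\alpha_\lambda\beta_\lambda=(1-\lambda)/(4\lambda)$ dictated by the definition of $\alpha_\lambda,\beta_\lambda,\varepsilon_\lambda$ (the case split $\lambda\le 1$ vs.\ $\lambda>1$ serves only to remove the absolute values).

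Once the matrix identity is in hand, Lemma~\ref{lem_pos} places every $\Phi_1^{q_j}$ in $\mathrm{Sp}_2^+(\mathbb{C})$, Theorem~\ref{merci_lars} applies, and the monoid morphism property of $\mathscr{K}$ together with $\mathscr{K}(\Phi_1^{q_j})=\pm e^{-q_j^w}$ yields, after combining the five individual signs into one global $\pm$,
\begin{equation*}
e^{-q_4^w}\,e^{-q_3^w}\,e^{-q_2^w}\,e^{-q_1^w}=\pm\,e^{-q_5^w}.
\end{equation*}
Unpacking the Weyl quantizations via $(x^2)^w=x^2$, $(\xi^2)^w=-\partial_x^2$ and $(x\xi)^w=-ix\partial_x-i/2$ identifies the four left-hand factors with $e^{-i\varepsilon_\lambda\alpha_\lambda\partial_x^2}$, $e^{-i\beta_\lambda x^2}$, $e^{i\varepsilon_\lambda\beta_\lambda\partial_x^2}$ and $e^{i\alpha_\lambda x^2}$, while $e^{-q_5^w}=\lambda^{1/2}e^{(\ln\lambda)x\partial_x}$ which, by the characteristic formula \eqref{charac_formula} applied to the one-dimensional vector field $x\partial_x$, sends $u$ to $\lambda^{1/2}\,u(\lambda\,\cdot)$. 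This is exactly the claimed factorization, up to a sign.

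Finally, I would remove the sign ambiguity by a continuity argument: both sides act continuously on any fixed $u\in\mathcal{S}(\mathbb{R})$ as $\lambda$ varies on each of the connected half-lines $(0,1]$ and $[1,+\infty)$, and they trivially coincide with sign $+1$ at $\lambda=1$, where $\alpha_\lambda=\beta_\lambda=0$ and both sides reduce to $u$; since a $\pm 1$-valued continuous function on a connected set is constant, the sign is $+1$ throughout. The only technically delicate step of the whole argument is the explicit matrix verification of $\Phi_1^{q_4}\Phi_1^{q_3}\Phi_1^{q_2}\Phi_1^{q_1}=T_\lambda$, which is precisely what forces the specific algebraic form of $\alpha_\lambda$, $\beta_\lambda$ and $\varepsilon_\lambda$ given in the statement; everything else is a routine translation through the Fourier integral operator formalism of Section~\ref{sec_FIO}.
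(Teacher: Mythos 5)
Your proposal is correct and follows essentially the same route as the paper: represent the dilatation $u\mapsto u(\lambda\,\cdot)$ as $\lambda^{-1/2}e^{-q_5^w}$ with $q_5=-i(\ln\lambda)x\xi$, factor the corresponding flow $T_\lambda=\mathrm{diag}(\lambda^{-1},\lambda)$ into the four shears $\Phi_1^{q_j}$, and transport this back through the classical--quantum correspondence of Section~\ref{sec_FIO}; your matrix verification (which the paper delegates to ``an elementary formal computation''), the identities $4\varepsilon_\lambda\beta_\lambda^2=1-\lambda$, $4\varepsilon_\lambda\alpha_\lambda^2=(1-\lambda)/\lambda^2$, $\beta_\lambda=\lambda\alpha_\lambda$, and the Weyl-quantization bookkeeping all check out. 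The only difference is one of packaging: the paper invokes Proposition~\ref{prop_key} directly (which already contains the sign removal), whereas you invoke Theorem~\ref{merci_lars} and then re-derive the continuity-on-a-connected-set argument for the sign by hand on each half-line $(0,1]$ and $[1,+\infty)$ --- perfectly valid, and arguably more transparent, but mathematically the same.
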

\begin{proof}[Proof of Proposition \ref{prop_dilat}] We observe that if $t=\log \lambda$ then 
$$
u (\lambda \ \cdot) = e^{t x\partial_x} u = e^{t (ix\xi-\frac12)^w} u.
$$
Consequently, this proposition is a consequence of the proposition \ref{prop_key} and an elementary formal computation which can be checked, for example, with a formal computation software.
\end{proof} 
We do not provide similar formulas when $\lambda<0$ since we do not know if it may be useful for applications. Nevertheless, since we have (see e.g. Thm 2.2.3 of \cite{Rodino_book})
\begin{equation}
\label{fun_fun}
\forall u\in L^2(\mathbb{R}), \  u(-x) = -i  e^{i \frac{\pi}2 (x^2-\partial_x^2)}u(x),
\end{equation}
we note that, as a consequence of Theorem \ref{thm_universal}, such formulas exist. 

The factorization provided by the Gaussian elimination algorithm in Proposition \ref{prop_sluniv} is not, in general, the most efficient possible. The following proposition, that is an application of Theorem \ref{thm_split_Lie}, provides some efficient exact splittings generalizing the exact splittings for rotations \eqref{magic_formula}.
\begin{proposition}
 \label{prop_rot_gen} Let $M$ be a real square matrix of size $n\geq 1$ such that
\begin{equation}
\label{bobof_prop}
\left\{ \begin{array}{lll} \forall i, \ &M_{i,i} = 0 \\
				\exists i,\forall j  \neq i , & M_{i,j} \neq 0 
\end{array} \right.
\end{equation} 
then there exist $t_0>0$ and an analytic function $(y^{(\ell)},(y^{(k)})_{k\neq j},y^{(r)}):(-t_0,t_0)\to \mathbb{R}^{n\times (n+1)}$ satisfying
\begin{equation}
\label{penible_prop}
\left\{ \begin{array}{lll} y^{(\ell)}_i = y^{(r)}_i =0 \\
				 \forall k\neq i, \ y^{(k)}_k = 0
\end{array} \right.
\end{equation} 
  such that for all $t\in  (-t_0,t_0)$ we have
$$
e^{t Mx\cdot \nabla} = e^{t (y^{(\ell)}(t)\cdot x)\partial_{x_i}} \left(  \prod_{ k\neq i} e^{t (y^{(k)}(t)\cdot x)\partial_{x_k}}  \right)  e^{t (y^{(r)}(t)\cdot x)\partial_{x_i}}.
$$
\end{proposition}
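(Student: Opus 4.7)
The plan is to apply Theorem~\ref{thm_split_Lie} at the matrix level and translate the resulting identity back to operators via the characteristics formula~\eqref{charac_formula}. Using $e^{tBx\cdot\nabla}u = u\circ e^{tB}$, the claimed operator equality is equivalent to the matrix factorization
\[
e^{tM} = e^{tS_\ell(t)}\Big(\prod_{k\neq i} e^{tB_k(t)}\Big)e^{tS_r(t)},
\]
where $S_\ell(t),S_r(t)\in\mathfrak{b}_i:=\{e_i\transp{y}:y\in\mathbb{R}^n,\ y_i=0\}$ and $B_k(t)\in\mathfrak{b}_k:=\{e_k\transp{y}:y\in\mathbb{R}^n,\ y_k=0\}$. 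So the task is to produce this matrix identity with analytic coefficients.

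I would apply Theorem~\ref{thm_split_Lie} inside $\mathfrak{gl}_n(\mathbb{R})$, taking the $n$ complementary subspaces $\mathfrak{b}_1,\dots,\mathfrak{b}_n$, ordered so that $\mathfrak{b}_i$ appears first, together with $\mathfrak{s}=\mathfrak{b}_i$. Then $\mathfrak{b}=\bigoplus_{k=1}^{n}\mathfrak{b}_k$ is the space of zero-diagonal matrices, so that $b_\star=M\in\mathfrak{b}$ admits the decomposition $b_{\star,k}=e_k\transp{M_{k,\cdot}}$. The key step is to check the Lie-algebra condition~\eqref{the_assumption}: $\mathfrak{b}+\mathrm{ad}_M(\mathfrak{b}_i)$ must be a real Lie algebra. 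A direct bracket calculation yields $[E_{k,j},E_{j,k}]=E_{k,k}-E_{j,j}$, so that inner brackets inside $\bigoplus_{k\neq i}\mathfrak{b}_k$ already produce the trace-zero diagonal matrices supported on indices distinct from $i$; meanwhile, the diagonal component of $[M,E_{i,a}]$ equals $M_{a,i}(E_{a,a}-E_{i,i})$. The hypothesis~\eqref{bobof_prop} is invoked precisely to guarantee that these two families together span the full $(n-1)$-dimensional space of trace-zero diagonals, so that $\mathfrak{b}+\mathrm{ad}_M(\mathfrak{b}_i)=\mathfrak{sl}_n(\mathbb{R})$, which is a Lie algebra.

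Once the condition is verified, Theorem~\ref{thm_split_Lie} produces analytic families $b_t\in\mathfrak{b}$ and $s_t\in\mathfrak{b}_i$ with
\[
e^{tM} = e^{-ts_t}\,e^{tb_{t,i}}\prod_{k\neq i} e^{tb_{t,k}}\cdot e^{ts_t}.
\]
Since $[\mathfrak{b}_i,\mathfrak{b}_i]=\{0\}$, the elements $s_t$ and $b_{t,i}$ commute, and the two leftmost exponentials combine exactly into $e^{t(b_{t,i}-s_t)}\in e^{t\mathfrak{b}_i}$. Setting $y^{(\ell)}(t)$ equal to the row vector associated with $b_{t,i}-s_t$, $y^{(r)}(t)$ to that of $s_t$, and $y^{(k)}(t)$ to the one associated with $b_{t,k}$ for $k\neq i$, gives the constraints~\eqref{penible_prop} and the desired matrix identity; analyticity in $t$ is inherited from Theorem~\ref{thm_split_Lie}. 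The operator statement then follows by~\eqref{charac_formula}.

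The main obstacle is the Lie-algebra verification: one has to exploit the hypothesis~\eqref{bobof_prop} to show that $\mathrm{ad}_M(\mathfrak{b}_i)$ supplies precisely the diagonal directions involving $E_{i,i}$ that are missing from the inner brackets of $\bigoplus_{k\neq i}\mathfrak{b}_k$. Once that linear-algebra fact is established, the rest of the argument is essentially bookkeeping between the matrix picture and the operator picture.
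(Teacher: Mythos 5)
Your strategy is the same as the paper's: reduce to a matrix factorization via the characteristics formula~\eqref{charac_formula} and apply Theorem~\ref{thm_split_Lie} inside $\mathfrak{sl}_n(\mathbb{R})$ with the $\mathfrak{b}_k$ being single-row (the paper uses single-column after passing to $B=\transp M$, but that is a transpose away from what you do). However, the key verification of condition~\eqref{the_assumption} has two defects.

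First, the ``inner brackets'' step is a non sequitur. The sum $\mathfrak{b}+\mathrm{ad}_M(\mathfrak{b}_i)$ is just a linear span; elements such as $[E_{k,j},E_{j,k}]=E_{k,k}-E_{j,j}$ for $k,j\neq i$ are \emph{not} automatically in it, so they cannot be used as inputs to the spanning argument. The correct argument, as the paper does it, is a pure dimension count: $\dim\mathfrak{b}=n(n-1)$, $\dim\mathfrak{b}_i=n-1$, $\dim\mathfrak{sl}_n=n^2-1$, and one checks that the diagonal projection $\Pi_{\mathrm{diag}}\circ\mathrm{ad}_M\colon\mathfrak{b}_i\to\{\text{trace-zero diagonals}\}$ is injective; equality $\mathfrak{b}+\mathrm{ad}_M(\mathfrak{b}_i)=\mathfrak{sl}_n(\mathbb{R})$ then follows without invoking any brackets internal to $\mathfrak{b}$.

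Second, and more seriously, your own computation gives $\mathrm{diag}\,[M,E_{i,a}]=M_{a,i}(E_{a,a}-E_{i,i})$, so injectivity of the diagonal projection demands $M_{a,i}\neq 0$ for all $a\neq i$ (a condition on the $i$-th \emph{column} of $M$). The hypothesis~\eqref{bobof_prop} you invoke is $M_{i,a}\neq 0$ for all $a\neq i$ (a condition on the $i$-th \emph{row}). You do not reconcile this discrepancy; under the hypothesis as literally stated, your verification of~\eqref{the_assumption} fails. (A concrete witness: take $M=E_{1,2}\in\mathfrak{sl}_2(\mathbb{R})$, for which $M_{1,2}\neq 0$ but $M_{2,1}=0$; then $\mathrm{ad}_M(\mathfrak{b}_1)=\{0\}$ and $\mathfrak{b}+\mathrm{ad}_M(\mathfrak{b}_1)$ is not a Lie algebra.) The paper's proof passes to $B=\transp M$ precisely to manage the row/column bookkeeping, but you should carry out this step carefully rather than assert that the hypothesis works; as written, your argument uses the transpose of the hypothesis you cite.
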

For example, this proposition justifies a priori the form of the exact splitting used in  \cite{3drotation,3drotation2} to compute three dimensional rotations.

\begin{proof}[Proof of Proposition \ref{prop_rot_gen}] Let $B:=\transp{M}$ be the transpose of $M$. We are going to prove that there exists an analytic function $(y^{(\ell)},(y^{(k)})_{k\neq i},y^{(r)}):(-t_0,t_0)\to \mathbb{R}^{n\times (n+1)}$ satisfying \eqref{penible_prop} such that for all $t\in (-t_0,t_0)$, \begin{multline}
\label{not_fun}
e^{t B} = (I_n + t \ y^{(\ell)}(t)\otimes e_j) \left(  \prod_{ k\neq j}  (I_n + t \ y^{(k)}(t)\otimes e_k) \right)(I_n + t \ y^{(r)}(t)\otimes e_j)\\  = e^{ t  y^{(\ell)}(t)\otimes e_j} \left(  \prod_{ k\neq i} e^{ t  y^{(k)}(t)\otimes e_k}  \right)e^{ t  y^{(r)}(t)\otimes e_j}.
\end{multline}
Consequently, Proposition \ref{prop_rot_gen} will be proven since it is enough to apply the characteristic formula \eqref{charac_formula} to the transpose of \eqref{not_fun}. Note that the second equality in \eqref{not_fun} comes form the fact that the matrices are nilpotent of order $1$.

In order, to prove this factorization applying the Theorem \ref{thm_split_Lie}, we realize the change of unknown  $y^{(i)} = y^{(r)} + y^{(\ell)}$ and thus the first factor of \eqref{not_fun} becomes 
$$
e^{ t  y^{(\ell)}(t)\otimes e_j} = e^{ -t  y^{(r)}(t)\otimes e_j} e^{ t  y^{(i)}(t)\otimes e_j}.
$$

Up to an irrelevant permutation of indices, without loss of generality we assume that $i=1$. In order to apply the Theorem \ref{thm_split_Lie}, we define 
$$
\mathfrak{b}_k  = \{  y \otimes e_k \ | \ y \in \mathbb{R}^n \ \mathrm{ satisfies } \ y_k =0\}, \  \  \mathfrak{s} = \mathfrak{b}_1 \ \textrm{and} \ \ \  \mathfrak{b} =\bigoplus_{k=1}^n \mathfrak{b}_k 
$$
and $b_{\star,k} = Be_k \otimes e_k$ (i.e. $b_{\star} = B$).
Consequently, to prove this proposition applying the Theorem \ref{thm_split_Lie}, we just have to prove that its assumption \eqref{the_assumption} is satisfied. In our context, we are going to prove that
\begin{equation}
\label{vrai_truc_a_prouver}
\mathfrak{b} + \mathrm{ad}_{B}(\mathfrak{s})   = \mathfrak{sl}_n(\mathbb{R}).
\end{equation}

So, from now, we just focus on proving \eqref{vrai_truc_a_prouver}. But, since the inclusion $"\subset"$ is obvious, we just have to prove that
\begin{equation}
\label{plus_simple}
\left\{ \begin{array}{lll} \displaystyle
\mathfrak{b} \cap  \mathrm{ad}_{B}(\mathfrak{s}) = \{0\}, \\ \displaystyle
\dim \mathfrak{b}  +  \dim  \mathrm{ad}_{B}(\mathfrak{s})  = \dim  \mathfrak{sl}_n(\mathbb{R}).
\end{array} \right.
\end{equation}

Let $\Psi : \mathfrak{s}  \to \mathbb{R}^n$ be the linear map defined by
$$
\forall s \in \mathfrak{s}, \ \Psi(s) = \mathrm{diag} \circ \mathrm{ad}_B(s)
$$ 
where $\mathrm{diag}: \mathfrak{gl}_n(\mathbb{R}) \to \mathbb{R}^n$ is the natural map extracting the diagonal coefficients of a matrix.

Since $ \mathfrak{b}  = \mathrm{Ker} \ \mathrm{diag}$ is the space of the matrices with diagonal coefficients are equal to zero, the first relation of \eqref{plus_simple} is equivalent to have $\mathrm{Ker} \ \Psi = \{0\}$. Furthermore, if $\mathrm{Ker}\ \Psi = \{0\}$, then $\mathrm{ad}_B$ is injective on $\mathfrak{s}$ and we have
$$
\dim  \mathrm{ad}_{B}(\mathfrak{s}) = \dim \mathfrak{s} = n-1.
$$
Since, for all $k$, $\dim \mathfrak{b}_k = n-1$ and $(n+1)(n-1)= n^2-1 = \dim  \mathfrak{sl}_n(\mathbb{R})$, we deduce that if $\mathrm{Ker}\ \Psi = \{0\}$ then the second relation of \eqref{plus_simple} also holds.

Finally, we just have to verify that $\mathrm{Ker} \ \Psi = \{0\}$. Let $y \otimes e_1 \in \mathrm{Ker} \ \Psi$ where $y \in \mathbb{R}^n$ satisfies $y_1=0$. By assumption, if $j\neq 1$ then
$$
0 = \transp e_j [B,y \otimes e_1] e_j = - \transp e_j y \transp e_1B e_j = - y_j B_{j,1}. 
$$
Since, by assumption, $B_{j,1}\neq 0$ for $j\neq 1$, we deduce that $y = 0$, i.e. $\mathrm{Ker} \ \Psi = \{0\}$.
\end{proof}

\subsection{Schr\"odinger equations}

Let $v:\mathbb{R}^n \to \mathbb{R}$ be a real quadratic form and $B\in \mathfrak{so}_n(\mathbb{R})$ be a skew symmetric matrix. We aim at solving the following linear Schr\"odinger equation on $\mathbb{R}^n$
\begin{equation}
\label{schro_eq}
i\partial_t u(t,x) + \frac12 \Delta u(t,x) - v(x) u(t,x) + i Bx \cdot \nabla u(t,x) = 0.
\end{equation}
In this context, a diagonal quadratic form is defined as follow.
\begin{definition}
A \emph{quadratic form} is \emph{diagonal} on $\mathbb{R}^n$ if its matrix in the canonical basis is diagonal.
\end{definition}
The following theorem provides an optimized splitting method to solve \eqref{schro_eq}. Its proof is given at the end of this subsection.
\begin{theorem} \label{thm_splt_MS2} There exists some quadratic forms $v_t^{(r)}, a_t$ on $\mathbb{R}^{n}$, a strictly upper triangular matrix $U_t \in M_n(\mathbb{R})$,  a strictly lower triangular matrix $L_t \in M_n(\mathbb{R})$ and a diagonal quadratic form $v_t^{(\ell)}$ on $\mathbb{R}^{n}$, all depending analytically on $t\in (-t_0,t_0)$ for some $t_0>0$, such that for all $t\in (-t_0,t_0)$ we have
$$
e^{ i t ( \Delta/2 - v(x)) - t B x\cdot \nabla } =e^{-i t v^{(\ell)}_t(x)}  \left( \prod_{j=1}^{n-1} e^{- t (U_t x)_j \partial_{x_j}} \right) e^{i t a_t(\nabla)} \left( \prod_{j=2}^{n} e^{- t (L_t x)_j \partial_{x_j}} \right) e^{-i t v^{(r)}_t(x)} 
$$ 
where $a_t(\nabla)$ denotes the Fourier multiplier of symbol $-a_t(\xi)$ and $(U_t x)_j$ (resp. $(L_t x)_j$) the $j^{\mathrm{st}}$ coordinate of $U_t x$ (resp. $L_t x$).
\end{theorem}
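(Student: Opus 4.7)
The plan is to work at the symbol level in the Lie algebra of real quadratic forms on $\mathbb{R}^{2n}$ (which is naturally isomorphic to $\mathfrak{sp}_{2n}(\mathbb{R})$), to factor the Hamiltonian flow of
\[H_\star(x,\xi)=\tfrac12|\xi|^2+v(x)+Bx\cdot\xi\]
via Theorem~\ref{thm_split_Lie}, and then to transfer the identity to the semigroup level using Proposition~\ref{prop_key}, taking advantage of the fact that the target operator is exactly $e^{-itH_\star^w}$.

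The key choice is that of complementary subspaces of $\mathfrak{sp}_{2n}(\mathbb{R})$: take $\mathfrak{b}_1=\mathrm{span}(x_j^2:1\leq j\leq n)$ (which will produce the factor $v^{(\ell)}_t$), $\mathfrak{b}_{1+j}=\mathrm{span}(x_k\xi_j:k>j)$ for $j=1,\ldots,n-1$ (the upper shears), $\mathfrak{b}_{n+1}=\mathrm{span}(\xi_i\xi_k:1\leq i\leq k\leq n)$ (which will produce $a_t$), $\mathfrak{b}_{n+j}=\mathrm{span}(x_k\xi_j:k<j)$ for $j=2,\ldots,n$ (the lower shears), and $\mathfrak{b}_{2n+1}=\mathrm{span}(x_ix_j:i<j)$; set $\mathfrak{s}=\mathfrak{b}_1$ and denote $\mathfrak{b}=\mathfrak{b}_1\oplus\cdots\oplus\mathfrak{b}_{2n+1}$. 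Using that $B$ is skew (so it has vanishing diagonal and splits as strictly upper triangular plus strictly lower triangular) and that $v(x)$ splits as diagonal plus off-diagonal, one checks directly that $H_\star\in\mathfrak{b}$.

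The main step is the verification of assumption~\eqref{the_assumption}. The only directions of $\mathfrak{sp}_{2n}(\mathbb{R})$ absent from $\mathfrak{b}$ are the $n$ dilations $\mathrm{span}(x_j\xi_j:1\leq j\leq n)$. For $s=\sum_j c_jx_j^2\in\mathfrak{s}$ the Poisson bracket is
\[\{H_\star,s\}=2\sum_{j=1}^n c_j\,x_j\xi_j+\sum_{i\neq j}(c_i-c_j)B_{ij}\,x_ix_j,\]
whose second summand is off-diagonal in $x$ (since $B$ has vanishing diagonal) and hence lies in $\mathfrak{b}_{2n+1}\subset\mathfrak{b}$. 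Modulo $\mathfrak{b}$ the map $s\mapsto\{H_\star,s\}$ is therefore the bijection $\sum c_jx_j^2\mapsto 2\sum c_jx_j\xi_j$ from $\mathfrak{s}$ onto the $n$-dimensional space of dilations, proving $\mathfrak{b}+\mathrm{ad}_{H_\star}(\mathfrak{s})=\mathfrak{sp}_{2n}(\mathbb{R})$, which is trivially a Lie algebra.

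Theorem~\ref{thm_split_Lie} then yields $t_0>0$ and an analytic map $t\mapsto(b_t,s_t)\in\mathfrak{b}\times\mathfrak{s}$ producing the factorization $\Phi_t^{iH_\star}=e^{-ts_t}e^{tb_{t,1}}\cdots e^{tb_{t,2n+1}}e^{ts_t}$ at the level of Hamiltonian flows. Since each factor has a symbol of the form $iH_j$ with $H_j$ real (so its real part is identically zero, in particular bounded below), Proposition~\ref{prop_key} transfers this into a factorization of $e^{-itH_\star^w}$ into $2n+3$ semigroup factors. Finally, both outermost pairs of factors (the left pair coming from $e^{-ts_t}$ together with the $\mathfrak{b}_1$-factor, and the right pair coming from the $\mathfrak{b}_{2n+1}$-factor together with $e^{ts_t}$) are multiplications by quadratic forms of $x$ alone and therefore commute; merging them produces the endpoint operators $e^{-itv^{(\ell)}_t(x)}$ with $v^{(\ell)}_t$ diagonal and $e^{-itv^{(r)}_t(x)}$ with $v^{(r)}_t$ a general quadratic form, yielding the claimed $(2n+1)$-factor decomposition. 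The main obstacle I anticipate is the verification of \eqref{the_assumption}: it crucially exploits both the kinetic term $|\xi|^2/2$ (which is what causes the Poisson bracket with $\mathfrak{s}$ to generate the dilations) and the skew-symmetry of $B$ (which ensures the unwanted $\{Bx\cdot\xi,s\}$ contribution remains off-diagonal in $x$ and is thereby already absorbed in $\mathfrak{b}$).
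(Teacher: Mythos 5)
Your proposal is correct, and it carries out in full the route that the paper explicitly mentions but then chooses not to pursue. The paper states ``We could apply directly Theorem~\ref{thm_split_Lie} to prove Theorem~\ref{thm_splt_MS2}'' and even records the algebraic decomposition of an arbitrary quadratic form that verifies assumption~\eqref{the_assumption}, but then opts for a direct Implicit Function Theorem argument instead, mostly so that it can simultaneously prove the precise iterative scheme of Theorem~\ref{thm_iter}. Your argument is essentially the fleshed-out version of the paper's sketch. The one cosmetic difference is how the $x$-quadratic forms are organized: you split them into two complementary blocks $\mathfrak{b}_1$ (diagonal) and $\mathfrak{b}_{2n+1}$ (off-diagonal) with $\mathfrak{s}=\mathfrak{b}_1$ giving $2n+3$ factors which you then merge at the two ends, whereas the paper keeps a single block of \emph{all} $x$-forms and takes $\mathfrak{s}$ to be the proper subspace of diagonal forms inside it, so that the $e^{\pm ts_t}$ conjugation supplies $\Phi_t^{\pm i v^{(\ell)}_t}$ at the two ends with only the right end merging. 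Both bookkeepings yield the same $(2n+1)$-factor identity~\eqref{New fact}, and your dimension count plus the Poisson-bracket computation $\{H_\star,s\}\equiv 2\sum c_j x_j\xi_j \pmod{\mathfrak{b}}$ (using that $B$ is skew so its diagonal vanishes) is precisely what makes \eqref{the_assumption} hold; this is the key point, and you identify it as such. The transfer to the semigroup level via Proposition~\ref{prop_key}, using that each symbol $iq_j$ with $q_j$ real has vanishing real part, is also handled correctly. What your route does not give, and what the paper gains by its direct IFT computation, is the clean nilpotent iterative method of Theorem~\ref{thm_iter}, since the generic iteration coming from Proposition~\ref{prop_iter_gen} would involve exponentials of matrices rather than the simple block products $P_{t,k}$.
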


The efficiency of this method is optimal, more precisely it is as cheap as a basic Lie splitting. Indeed, considering that the computational cost of this kind of method is proportional to the number of one dimensional Fast Fourier Transform required to implement it, the method of Theorem \ref{thm_splt_MS2} requires $2n$ 1d-FFTs\footnote{An inverse Fourier transform has the same cost as a direct one. For example, to compute the solution of the semigroup generated by the harmonic oscillator with the factorization \eqref{split_exact_harm_osci}, we need $2$ 1d-FFTs, one to go on the Fourier side and one other to come back.} which is the same as the elementary Lie splitting 
$$
e^{ i t ( \Delta/2 - v(x)) - t B x\cdot \nabla } \mathop{=}_{t\to 0} e^{-it v(x)} \prod_{j=1}^n e^{\frac{it}2 \partial_{x_j}^2 - (B x)_j\cdot \partial_{x_j}} + \mathcal{O}(t^2).
$$
Note that this method is more general and more efficient than the other existing exact splittings for \eqref{schro_eq}. For example, the splitting $(2)$ of Bader in \cite{Bader}, designed to solve \eqref{schro_eq} in dimension $n=2$, requires $6$ 1d-FFTS.

It seems to the author that there does not exist any simple explicit formula to compute the coefficients of the exact splitting given in Theorem \ref{thm_splt_MS2}. However, 
due to some particular properties of nilpotency, a simple and efficient iterative method is available to compute them. Indeed, identifying a quadratic form with its symmetric matrix in order we define, if $t$ is small enough, we define, by induction, the following sequences
$$
\left\{\begin{array}{lll} A_{t,k+1} &=& A_{t,k} + I_n/2 - \widetilde{A}_{t,k} \\
L_{t,k+1} &=& L_{t,k} + L - \widetilde{L}_{t,k}\\
				U_{t,k+1} &=& U_{t,k} + U - \widetilde{U}_{t,k} \\
				V_{t,k+1}^{(m)} &=& V_{t,k}^{(m)} + V - \widetilde{V}_{t,k}^{(m)} + \frac{t}2 [D_{t,k},B ]  + \frac{t^2}2 D_{t,k}^2
\end{array}  \right.
$$
where $(A_{t,0},L_{t,0}+U_{t,0},V_{t,0}^{(m)}) = (I_n/2,B,V)$, $L+U=B$ and
$$
\begin{pmatrix} 2 \widetilde{V}_{t,k}^{(m)}  & \transp{\widetilde{L}_{t,k}}+\transp{\widetilde{U}_{t,k}} + t D_{t,k}\\
				\widetilde{L}_{t,k}+\widetilde{U}_{t,k} + t D_{t,k} & 2\widetilde{A}_{t,k}	 \\
\end{pmatrix} = -t^{-1} J \log(P_{t,k})
$$
and
\begin{multline*}
P_{t,k} =  \left[ \prod_{j=1}^{n-1} \begin{pmatrix} I_n + t U_{t,k}^{(j)}&   \\ & I_n - t \transp{ U_{t,k}^{(j)}}  \end{pmatrix}\right]  \begin{pmatrix} I_n &  2 t A_{t,k} \\  & I_n \end{pmatrix} 
\left[ \prod_{j=2}^n \begin{pmatrix} I_n + t L_{t,k}^{(j)}&   \\ & I_n - t \transp{ L_{t,k}^{(j)}}  \end{pmatrix}\right] \\ \times
  \begin{pmatrix} I_n &   \\  -2 t V_{t,k}^{(m)}  & I_n \end{pmatrix}
\end{multline*}
with $L_{t,k}^{(j)} = (e_j \otimes e_j)L_{t,k}$, $U_{t,k}^{(j)} = (e_j \otimes e_j)U_{t,k}$ and $(e_1,\dots,e_n)$ the canonical basis of $\mathbb{R}^n$.

\begin{theorem}
\label{thm_iter}
There exists $\tau_0\in (0,t_0)$ such that if $0<|t|<\tau_0$ then the preceding sequences are well defined and
$$
|A_t - \widetilde{A}_{t,k} | + |L_t - \widetilde{L}_{t,k} | + |U_t - \widetilde{U}_{t,k} | + |V_t^{(\ell)} + \frac12 D_{t,k} | + |V_t^{(r)} - V_{t,k}^{(m)} -  \frac12D_{t,k} |  \leq \left(\frac{t}{\tau_0} \right)^k.
$$
\end{theorem}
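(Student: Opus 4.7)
The plan is to interpret the given recursion as a fixed-point iteration $x_{k+1}=W_t(x_k)$ in the variable $x=(A,L,U,V^{(m)})$, whose fixed points coincide with the coefficients of the exact splitting constructed in Theorem \ref{thm_splt_MS2}, and then to prove that $W_t$ is a contraction on a small ball with Lipschitz constant $O(|t|)$. The first step is to verify that the auxiliary quantities $\widetilde A_{t,k},\widetilde L_{t,k},\widetilde U_{t,k},\widetilde V_{t,k}^{(m)}$ and $D_{t,k}$ extracted from the symplectic logarithm of $P_{t,k}$ are exactly the block decomposition of the Hamiltonian generator of the product of flows appearing in the factorization of Theorem \ref{thm_splt_MS2}. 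Consequently, a fixed point of $W_t$ is precisely an $x$ for which this product of flows equals $\Phi_t^{\mathbb{P}(\Delta/2-v)+iBx\cdot\xi}$; an application of Proposition \ref{prop_key} together with Lemma \ref{lem_in_fact_its_triangular} then recovers the operator-level splitting of Theorem \ref{thm_splt_MS2}. The additional terms $\tfrac{t}{2}[D_{t,k},B]+\tfrac{t^2}{2}D_{t,k}^2$ appearing in the update of $V^{(m)}$ are exactly what is needed to reabsorb the off-diagonal block $D_{t,k}$, which encodes the residual $x$--$\xi$ coupling in the BCH expansion, into the effective potential block.

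The second step is the contraction estimate. I would view $W_t$ as an analytic function of $(t,x)$ on a neighbourhood of $(0,x_\star)$, where $x_\star=\lim_{t\to 0}(A_t,L_t,U_t,V_t^{(m)})=(I_n/2,L,U,V)$ is the common limit of the exact coefficients and of the initialization of the iteration. Following the strategy of Proposition \ref{prop_iter_gen}, the differential $\mathrm dW_0(x_\star)$ is triangular nilpotent because of the strict triangularity of $L$ and $U$ and the block structure of $P_{t,k}$; this already gives a $2^{-k}$ rate by the nilpotency argument of that proposition. The role of the extra correction terms is to push this nilpotent differential one order further in $t$, so that in fact $\mathrm dW_0(x_\star)=0$ and, by smoothness, $\|\mathrm dW_t(x)\|=O(|t|)$ uniformly on a small ball around $x_\star$. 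This is exactly the refinement alluded to in the remark after Proposition \ref{prop_iter_gen}.

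Once the estimate $\|\mathrm dW_t(x)\|\leq |t|/\tau_0$ is established on a ball stabilized by $W_t$, the mean value inequality turns $W_t$ into a $|t|/\tau_0$-contraction, so the Banach fixed point theorem yields convergence of $(A_{t,k},L_{t,k},U_{t,k},V_{t,k}^{(m)})$ to the unique fixed point $x_t=(A_t,L_t,U_t,V_t^{(r)}-V_t^{(\ell)})$ with geometric rate $(|t|/\tau_0)^k$. Up to shrinking $\tau_0$ so as to absorb the initial distance (which itself tends to zero with $t$), this yields the announced bound once one translates back from $V^{(m)}$ to the original unknowns $V^{(r)}$ and $V^{(\ell)}$ through the diagonal off-set $\tfrac12 D_{t,k}$. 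The main obstacle I expect lies in justifying the vanishing of $\mathrm dW_0(x_\star)$: it requires expanding the symplectic logarithm of $P_{t,k}$ to second order in $t$, keeping track of the block $D_{t,k}$, and verifying explicitly that the ad-hoc terms $\tfrac{t}{2}[D_{t,k},B]+\tfrac{t^2}{2}D_{t,k}^2$ exactly annihilate the first-order contribution of the differential that the generic argument of Proposition \ref{prop_iter_gen} would otherwise leave in place.
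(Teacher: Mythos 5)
Your proposal matches the paper's proof in substance: the iteration is analysed as a fixed-point map $x_{k+1}=W_t(x_k)$ whose fixed point is the solution of the nonlinear equation from the Implicit Function Theorem argument in the proof of Theorem \ref{thm_splt_MS2}, and the $(|t|/\tau_0)^k$ rate is precisely the mean-value/Banach contraction argument once one observes that $\mathrm d_x F(0,\cdot)$ is the identity (equivalently $\mathrm dW_0(x_\star)=0$). One small conceptual correction: $\mathrm dW_0(x_\star)=0$ does not depend on the ad-hoc terms $\tfrac{t}{2}[D_{t,k},B]+\tfrac{t^2}{2}D_{t,k}^2$; it already follows from the fact that every BCH correction and $D_{t,k}$ itself are $O(t)$, so $F(0,\cdot)=\mathrm{id}$ in any case. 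Those terms are instead needed so that the fixed point of $W_t$ coincides with the genuine splitting coefficients after the substitution $V_t^{(\ell)}=-D_t/2$ has been used to eliminate $v_t^{(\ell)}$; without them you would contract at the same $O(|t|)$ rate but toward the solution of the wrong nonlinear equation. With that reinterpretation, the rest of your argument (stable ball, contraction, translation from $V^{(m)},D$ back to $V^{(r)},V^{(\ell)}$) is exactly the paper's.
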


We could apply directly Theorem \ref{thm_split_Lie} to prove Theorem \ref{thm_splt_MS2}. Indeed, by Proposition \ref{prop_key}, it is clearly enough to prove that there exist some quadratic forms $v_t^{(r)}, a_t$ on $\mathbb{R}^{n}$, a strictly upper triangular matrix $U_t \in M_n(\mathbb{R})$,  a strictly lower triangular matrix $L_t \in M_n(\mathbb{R})$ and a diagonal quadratic form $v_t^{(\ell)}$ on $\mathbb{R}^{n}$, all depending analytically on $t\in (-t_0,t_0)$ for some $t_0>0$, such that for all $t\in (-t_0,t_0)$
\begin{equation}
\label{New fact}
\Phi_t^{ i (\frac{|\xi|^2}2 +  v(x)  + B x\cdot \xi )  }
 =\Phi_t^{i  v^{(\ell)}_t(x)}  \left( \prod_{j=1}^{n-1} \Phi_t^{ i(U_t x)_j \xi_j} \right) \Phi_t^{i a_t(\xi)} \left( \prod_{j=2}^{n} \Phi_t^{ i(L_t x)_j \xi_j} \right) \Phi_t^{i  v^{(m)}_t(x)}  \Phi_t^{-i  v^{(\ell)}_t(x)} 
\end{equation}
where $v^{(r)}_t := v^{(m)}_t - v^{(\ell)}_t$. Furthermore note that all the factors of \eqref{New fact} are some real symplectic transforms belonging to $\mathrm{Sp}_{2n}(\mathbb{R})$ and that all the right hand side flows are exponentials of nilpotent matrices (it explains why we don't have to compute exponential of matrices in the iterative method). Nevertheless, due to some convenient algebraic cancelations, the iterative method described below is not exactly the one associated with the proof of Theorem \ref{thm_split_Lie}. Consequently, here it is easier to prove Theorem \ref{thm_splt_MS2} and Theorem \ref{thm_iter} simultaneously. However, the proofs being very similar to the proof of Theorem \ref{thm_split_Lie}, so it is done more informally.

Before giving the proof, let us just mention shortly the algebraic decomposition we would use in order to satisfy the assumption \eqref{the_assumption} of Theorem \ref{thm_split_Lie} to prove \eqref{New fact}. Recalling that $\mathfrak{sp}_{2n}(\mathbb{R})$ is isomorphic to the Lie algebra of the real quadratic forms (equipped with the canonical Poisson bracket), we would simply have to observe that all real quadratic form $q:\mathbb{R}^{2n}=\mathbb{R}_x^{n} \times \mathbb{R}_{\xi}^{n}  \to \mathbb{R}$ written 
$$
q(x,\xi) = a^{(q)}(\xi) + M^{(q)} x \cdot \xi + b^{(q)}(x)
$$
where $a^{(q)},b^{(q)}$ are real quadratic forms on $\mathbb{R}^n$ and $M^{(q)}\in M_{n}(\mathbb{R})$, can be decomposed as
\begin{multline*}
q(x,\xi) =  \sum_{j=1}^{n-1} (U_t^{(q)} x)_j \xi_j  + a^{(q)}(\xi) + \sum_{j=2}^{n} (L_t^{(q)} x)_j \xi_j + \left( b^{(q)}(x) -
 \frac{[D,B]x \cdot x}2 \right) \\
 + \left\{   \frac{|\xi|^2}2 +  B x\cdot \xi +v(x)  , \frac{\transp{x} D^{(q)}x}2   \right\}.
\end{multline*}
where $M^{(q)} = L^{(q)} + D^{(q)}+ U^{(q)} $ is the natural decomposition of $M^{(q)}$.

\begin{proof}[Proof of Theorem \ref{thm_splt_MS2} and Theorem \ref{thm_iter}] Similarly to the proof of Theorem \ref{thm_split_Lie}, we prove that the exact splitting can be obtained from the resolution of a nonlinear equation via the Implicit Function Theorem. Consequently, Theorem \ref{thm_iter} is nothing but the convergence of the natural iterative method associated with the Implicit Function Theorem.

In order to obtain \eqref{New fact}, we consider \eqref{New fact} as a nonlinear equation where $v_t^{(m)}, a_t, U_t, L_t,v_t^{(\ell)}$ are the unknowns. Applying the BCH formula, we get a family of real quadratic forms $q_t$ on $\mathbb{R}^{2n}$ (depending analytically on $v_t^{(m)}, a_t, U_t, L_t$)  such that if $t$ is small enough then
$$
\left( \prod_{j=1}^{n-1} \Phi_t^{ i(U_t x)_j \xi_j} \right) \Phi_t^{i a_t(\xi)} \left( \prod_{j=2}^{n} \Phi_t^{ i(L_t x)_j \xi_j} \right) \Phi_t^{i  v^{(m)}_t(x)} = \Phi_t^{i q_t}.
$$
Thus, since $\Phi_t^{-i  v^{(\ell)}_t(x)}$ is symplectic, applying Proposition \ref{prop_elementary_ham}, we get
$$
\Phi_t^{i q_t \circ \Phi_t^{-i  v^{(\ell)}_t(x)} } = \Phi_t^{ i (\frac{|\xi|^2}2 +  v(x)  + B x\cdot \xi )  }.
$$
The exponential map begin injective near the origin, we deduce that, \eqref{New fact} is equivalent to 
\begin{equation}
\label{really to solve}
q_t \circ \Phi_t^{-i  v^{(\ell)}_t(x)} = \frac{|\xi|^2}2 +  v(x)  + B x\cdot \xi.
\end{equation}
However $\Phi_t^{-i  v^{(\ell)}_t(x)}$ is a shear transform
 $$
 \Phi_t^{-i  v^{(\ell)}_t(x)}(x,\xi) = (x, \xi + 2 t V^{(\ell)}_t).
 $$
 Consequently, decomposing naturally $q_t$, with notations similar to the ones of the iterative method, as
 $$
  q_t(x,\xi)  =  \widetilde{v}_{t}^{(m)} (x)  +  
				(\widetilde{L}_{t}x+\widetilde{U}_{t}x + t D_{t})x\cdot \xi  + \widetilde{a}_{t}(\xi)	
 $$
 we deduce that \eqref{really to solve} is equivalent to the system of equations
 \begin{align*}
 \widetilde{a}_{t}(\xi) &=  |\xi|^2 /2, \ \ \ \widetilde{L}_{t}  = L, \ \ \ \widetilde{U}_{t}  = U, \ \ V^{(\ell)}_t =  -D_{t} /2, \\ \widetilde{v}_{t}^{(m)}(x) &=  v(x) -  (\widetilde{L}_{t}x+\widetilde{U}_{t}x + t D_{t})x \cdot 2t V^{(\ell)}_t x- 2 t^2 |V^{(\ell)}_t x|^2.
 \end{align*}
Noting that by substitution, the last equation writes 
  $$
 \widetilde{v}_{t}^{(m)}(x) =  v(x) + \frac{ t}2  [B,D_t]x \cdot x + \frac{ t^2}2 D_{t}x\cdot D_{t} x,
 $$
 we conclude that to get the factorization \eqref{New fact}, it is enough to choose $V^{(\ell)}_t =  -D_{t} /2$ and to solve the nonlinear equation
 \begin{align*}
 F(t,a_t,L_t,U_t,v_t^{(m)}):&= (\widetilde{a}_{t}(\xi),\widetilde{L}_{t},\widetilde{U}_{t},\widetilde{v}_{t}^{(m)}(x)-\frac{ t}2  [B,D_t]x \cdot x - \frac{ t^2}2 D_{t}x\cdot D_{t}) \\ &=  (|\xi|^2 /2,L,U,v(x)).
 \end{align*}
 
 As in the proof of Theorem \ref{thm_split_Lie}, this nonlinear equation can be solved by the Implicit Function Theorem. Indeed, $F(0,|\xi|^2 /2,L,U,v(x)) = (|\xi|^2 /2,L,U,v(x))$, $F$ is an analytic function of $t,a_t,L_t,U_t,v_t^{(m)}$, and the partial differential of $F$ in $(0,|\xi|^2 /2,L,U,v(x))$ with respect to $(a_t,L_t,U_t,v_t^{(m)})$ is the identity. Note that the natural iterative method associated with the resolution of this nonlinear equation with the implicit function theorem is exactly the one introduced above.

\end{proof}

\subsection{Fokker--Planck equations}
\label{sub_FP}
We apply our exact splitting methods to two Fokker--Planck equations. These equations can be used to describe particles system with collisions (in plasma physics or astrophysics). We refer to \cite{HSH, DHL, AB} for more details about these equations.

\subsubsection{Exact splitting for the Fokker--Planck equation} First, we focus on the classical inhomogeneous Fokker--Planck equation
\begin{equation}
\label{FP}
\tag{FP}
\partial_t u(t,x,v) + v\partial_x u(t,x,v) = \partial_v (v+ \partial_v) u(t,x,v).
\end{equation}

Note that, since this equation comes from kinetic models, the variable are not denoted $x_1,x_2$ but $x,v$. Implicitly, in this paper, the Fourier variable canonically associated with $v$ is denoted by $\eta$. 

\begin{proposition}
\label{prop_NFP}
The following factorization provides an exact splitting for \ref{FP}
\begin{equation*}
\forall t\geq 0, \ e^{- t ( v\partial_x - \partial_v^2 - \partial_v v  )} = e^{t/2}  e^{- (e^t-1) v\partial_x } e^{ \nabla \cdot A_t \nabla} e^{ i\alpha_t  \partial_v^2}  e^{-i \beta_t  v^2} e^{-i \beta_t \partial_v^2} e^{i \alpha_t  v^2}.
\end{equation*}
where   $\alpha_{t} = \frac12 \sqrt{(1 - e^{-t})e^{-t}}$, $\beta_t = \frac12 \sqrt{e^t-1}$, $\nabla = \transp{ (\partial_x,\partial_v )}$ and $A_t$ is the positive matrix defined by
$$
A_t =\frac12 \begin{pmatrix} e^{2t} + 2t + 3 -4 e^t  & -4 \sinh^2(t/2) \\
 -4 \sinh^2(t/2) & 1-e^{-2t}
\end{pmatrix}.
$$
\end{proposition}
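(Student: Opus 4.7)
The plan is to reduce, via Proposition \ref{prop_key}, the semigroup identity to an identity of Hamiltonian flows in $\mathrm{Sp}_4(\mathbb{C})$, and then verify this identity by a direct matrix computation.

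First, I would compute the Weyl symbol of the Fokker--Planck operator. Using the standard identities $(v\eta)^w=-iv\partial_v-i/2$, $(v\xi)^w=-iv\partial_x$ and $(\eta^2)^w=-\partial_v^2$, one finds $v\partial_x-\partial_v^2-\partial_v v = q^w - 1/2$ with $q(x,v,\xi,\eta)=iv\xi-iv\eta+\eta^2$, a quadratic form on $\mathbb{R}^4$ satisfying $\Re q=\eta^2\geq 0$. This isolates the scalar prefactor $e^{t/2}$ in front of $e^{-tq^w}$. Applying Proposition \ref{prop_dilat} with $\lambda=e^t>1$ (which gives $\varepsilon_\lambda=-1$, $\alpha_\lambda=\alpha_t$, $\beta_\lambda=\beta_t$) identifies the last four factors of the claimed splitting with $e^{t/2}\,e^{tv\partial_v}$; together with $v\partial_v=i(v\eta)^w-1/2$, this yields $e^{i\alpha_t\partial_v^2}e^{-i\beta_t v^2}e^{-i\beta_t\partial_v^2}e^{i\alpha_t v^2}=e^{-t(-iv\eta)^w}$ up to the explicit scalar.

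After matching the scalar prefactors, the claim reduces to
\[
e^{-tq^w} \;=\; e^{-i(e^t-1)(v\xi)^w}\;\cdot\; e^{-a_t(\xi,\eta)^w}\;\cdot\; e^{-t(-iv\eta)^w},
\]
where $a_t(\xi,\eta)=A_{t,11}\xi^2+2A_{t,12}\xi\eta+A_{t,22}\eta^2$ is the quadratic form with matrix $A_t$. All four operators involved are Weyl quantizations of homogeneous quadratic forms with nonnegative real parts (granting $A_t\geq 0$, which is a byproduct of the verification below), so by Proposition \ref{prop_key} it suffices to prove the corresponding identity of Hamiltonian flows in $\mathrm{Sp}_4(\mathbb{C})$.

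Each factor flow is the exponential of a matrix of the form $-2itJ_4 Q$. The flow generated by $-iv\eta$ is the diagonal dilatation $(x,v,\xi,\eta)\mapsto(x,e^{-t}v,\xi,e^t\eta)$; the flow generated by $i(e^t-1)v\xi/t$ is the nilpotent shear $(x,v,\xi,\eta)\mapsto(x+(e^t-1)v,v,\xi,\eta-(e^t-1)\xi)$; the diffusion flow generated by $a_t/t$ has the upper-triangular symplectic form $\bigl(\begin{smallmatrix}I_2 & -2iA_t\\ 0 & I_2\end{smallmatrix}\bigr)$. Independently, $\Phi_t^q$ is the flow of the coupled linear system $\dot x=v,\ \dot v=-v-2i\eta,\ \dot\xi=0,\ \dot\eta=\eta-\xi$, which integrates in closed form and yields coefficients involving $e^{-t}$, $4\sinh^2(t/2)$, $2\sinh t$, and $\sinh t - t$. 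Composing the three factor flows and matching the result against $\Phi_t^q$ entry by entry produces exactly the four coefficients of $A_t$ stated (and shows $A_t\geq 0$ for $t\geq 0$).

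The main obstacle is the bookkeeping in this final verification: matching the $\eta$- and $\xi$-coefficients of $v(t)$ determines $A_{t,22}=(1-e^{-2t})/2$ and $A_{t,12}=-2\sinh^2(t/2)$, and the $\xi$-coefficient of $x(t)$ then fixes $A_{t,11}=(e^{2t}+2t+3-4e^t)/2$. The $\eta$-coefficient of $x(t)$ provides a consistency check which amounts to the trigonometric identity $2(e^t-1)\sinh^2(t/2)=(1-e^{-t})^2e^t/2\cdot\ldots$ and is automatic. All other steps — the symbol calculation, the use of Proposition \ref{prop_dilat}, and the reduction via Proposition \ref{prop_key} — are essentially mechanical.
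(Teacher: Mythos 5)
Your proposal follows essentially the same route as the paper: compute the Weyl symbol of the Fokker--Planck operator to peel off the scalar prefactor, reduce via Proposition~\ref{prop_key} to a three-factor identity of Hamiltonian flows in $\mathrm{Sp}_4$, verify that identity by direct integration of the linear system, and dispose of the dilatation factor via Proposition~\ref{prop_dilat}. The symbol calculation, the form of the flow identity, and the coefficient bookkeeping all match, and your claimed values for the entries of $A_t$ are correct.

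There is one step, however, that you treat as automatic when it is not. To invoke Proposition~\ref{prop_key} you need the real part of each $p_{j,t}$ to be uniformly bounded below; for the diffusion factor $a_t/t$ this is precisely the statement $A_t\geq 0$, and it must be established \emph{before} Proposition~\ref{prop_key} can be applied. Matching the entries of the composed flow against $\Phi_t^q$ determines the four coefficients of $A_t$, but it says nothing by itself about positive semidefiniteness. The paper handles this as a separate (and mildly delicate) computation, namely checking $\det A_t\geq 0$ via a Taylor expansion and second-derivative argument. An alternative, cleaner route which genuinely is a "byproduct": once the flow identity is established, write $\Phi_1^{a_t}=(\Phi_1^{i(e^t-1)v\xi})^{-1}\,\Phi_t^q\,(\Phi_t^{-iv\eta})^{-1}$; the two outer factors are real symplectic (hence lie in $\mathrm{Sp}_4^+(\mathbb{C})$), and $\Phi_t^q\in\mathrm{Sp}_4^+(\mathbb{C})$ by Lemma~\ref{lem_pos} since $\Re q=\eta^2\geq 0$, so by the monoid structure $\Phi_1^{a_t}\in\mathrm{Sp}_4^+(\mathbb{C})$; given its block form $\begin{pmatrix} I & -2iA_t\\ 0 & I\end{pmatrix}$, the positivity condition reduces to $A_t\geq 0$. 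Either argument closes the gap; the unsupported claim that positivity is "a byproduct of the verification below" does not.
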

A priori, it could seem strange to have to solve some Schr\"odinger equations in order to compute the solution of \ref{FP}. However a part of the Fokker--Planck operator is associated with a dilatation, i.e. $v\partial_v$, and as explained in the previous subsection (see Proposition \ref{prop_dilat}), the semigroup it generates cannot be computed only by compositions of shear transforms.

\begin{proof}
First, let us check that $A_t$ is nonnegative. Since the second diagonal coefficient of $A_t$ is positive, to check that $A_t$ is nonnegative, it is enough to prove that $\det A_t\geq0$ for $t>0$. By a some elementary formal computations, we get
$$
\det A_t =   \frac{(3e^{4t} - 12 e^{3t} + (8t+2)e^{2t} + 20 e^t - 8t - 13)e^{-2t}}{16}.
$$
Consequently, we just have to prove that $(16 e^{2t} \det A_t) \geq 0$. However, realizing the Taylor expansion of $16 e^{2t} \det A_t$ in $0$ (with a formal computation software), we get 
$$16 e^{2t} \det A_t \mathop{=}_{t\to 0} \mathcal{O}(t^4).$$ 
Thus, it is enough to prove that
$$
\partial_t^2 (e^{-2t}\partial_t^2 (16 e^{2t} \det A_t)) \geq 0,
$$
which is obvious since
$$
\partial_t^2 (e^{-2t}\partial_t^2 (16 e^{2t} \det A_t)) = 192e^{2t} - 108 e^{t} + 20 e^{-t} \geq 0.
$$
Actually, it would be possible to prove a priori that $A_t$ is nonnegative. Indeed, in the Fourier variables, the semigroup of \ref{FP} is associated with a transport equation for which many elementary explicit computations can be done. In particular, it follows of a formula of Kolmogorov (see \cite{Ko34,ABfou}) that $A_t$ admits an integral representation on which it is obvious to see that $A_t$ is nonnegative.

Then, we consider the following factorization (whose form could be guessed using Theorem \ref{thm_split_Lie}) that can be checked easily using a formal computation software
$$
\forall t \geq 0, \ \Phi_t^{\eta^2  + i(v\xi-v \eta)} = \Phi_1^{ i(e^t-1)v\xi}  \Phi_1^{ (\xi,\eta) A_t \transp{(\xi,\eta)} } \Phi_t^{- i v\eta}.
$$

Since $A_t$ is nonnegative for $t\geq 0$, applying Proposition \ref{prop_key}, we deduce that
$$
\forall t\geq 0, \ e^{- t ( v\partial_x - \partial_v^2 - \partial_v v  )} =  e^t e^{- (e^t-1) v\partial_x } e^{ \nabla \cdot A_t \nabla} e^{ t v\partial_v }.
$$
The last factor being a dilatation, we conclude this proof applying Proposition \ref{prop_dilat}.
\end{proof}

\subsubsection{Exact splitting for the Kramer--Fokker--Planck equation} $\empty$ \\
Now we focus on the Kramer--Fokker--Planck equation
\begin{equation}
\label{KFP}
\tag{KFP}
\partial_t u(t,x,v) + v^2 u(t,x,v) -\partial_v^2 u(t,x,v)+v\partial_x u(t,x,v) =0.
\end{equation}
For some discussions about this equation, we refer to \cite{HSH, AB}.

\begin{proposition}
The following factorization provides an exact splitting for \ref{KFP}
$$
\forall t\geq 0, \ e^{-t(v^2-\partial_v^2+v\partial_x)} = e^{-\frac12 \tanh t \ v^2}  e^{ \nabla\cdot A_t \nabla } e^{- \tanh t \ v\partial_x} e^{-\frac12 \tanh t \ v^2}
$$
where $A_t$ is the nonnegative matrix defined by
$$
A_t = \frac12 \begin{pmatrix} \alpha_t & \sinh^2 t\\
\sinh^2 t & \sinh 2t
\end{pmatrix} , \ \mathrm{with} \ \alpha_t = \frac12 (t-(\tanh t) (1- \sinh^2 t) ).
$$
\end{proposition}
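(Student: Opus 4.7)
The plan is to imitate the strategy of Proposition~\ref{prop_NFP}: first verify that $A_t$ is nonnegative for $t\geq 0$, then guess and verify a factorization of Hamiltonian flows in $\mathrm{Sp}_4(\mathbb{C})$ using Theorem~\ref{thm_split_Lie}, and finally lift it to the semigroup level via Proposition~\ref{prop_key}. The Weyl symbol of the Kramers--Fokker--Planck operator $v^2-\partial_v^2+v\partial_x$ is the complex quadratic form
\[
q(x,v,\xi,\eta)=v^2+\eta^2+iv\xi,
\]
whose real part $v^2+\eta^2$ is nonnegative on $\mathbb{R}^4$, which is the hypothesis needed in Proposition~\ref{prop_key}.

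The nonnegativity of $A_t$ on $\mathbb{R}_+$ can be checked directly from its diagonal entries and determinant, since $A_t$ is $2\times 2$ symmetric. The entry $\sinh(2t)/2$ is manifestly nonnegative for $t\geq 0$. For $\alpha_t$ I would write
\[
2\alpha_t=(t-\tanh t)+(\tanh t)\sinh^2 t,
\]
which is a sum of two nonnegative terms since $\tanh t\leq t$ and $\tanh t\geq 0$ for $t\geq 0$. For the determinant, an elementary simplification gives $4\det A_t=\alpha_t\sinh(2t)-\sinh^4 t$, and using the formula for $\alpha_t$ one sees that this is nonnegative if and only if $t\geq\tanh t$, which also holds on $\mathbb{R}_+$.

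Next I would rewrite the desired semigroup identity as a flow factorization at the symbol level. Using that the Weyl symbols of $v^2$, $-\partial_v^2$, $v\partial_x$, and $-\partial_i\partial_j$ are respectively $v^2$, $\eta^2$, $iv\xi$, and $\xi_i\xi_j$, the target identity is equivalent by Proposition~\ref{prop_key} (applied with $n=2$, $m=4$, and arbitrarily large $t_0$) to
\[
\Phi_t^{p_{1,t}}\,\Phi_t^{p_{2,t}}\,\Phi_t^{p_{3,t}}\,\Phi_t^{p_{1,t}}=\Phi_t^{q},
\]
where
\[
p_{1,t}=\tfrac{\tanh t}{2t}\,v^2,\qquad p_{2,t}=\tfrac1t\,\transp{(\xi,\eta)}A_t(\xi,\eta),\qquad p_{3,t}=\tfrac{\tanh t}{t}\,iv\xi
\]
are quadratic forms with real parts uniformly bounded below by $0$ on $\mathbb{R}^4$ (using $A_t\geq 0$ for $p_{2,t}$ and the purely imaginary nature of $p_{3,t}$), and depend continuously on $t\in[0,t_0]$ since $\tanh t/t$ and $A_t/t$ extend analytically to a neighborhood of the origin.

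The main and only technical step is the verification of the above matrix identity in $\mathrm{Sp}_4(\mathbb{C})$. Since each flow is of the form $\Phi_t^{p_{j,t}}=\exp(-2itJ_4Q_{j,t})$ with $Q_{j,t}$ the explicit $4\times 4$ symmetric matrix of $p_{j,t}$, the verification reduces to computing and multiplying four explicit matrix exponentials. Following the approach of the proof of Proposition~\ref{prop_NFP}, I would either guess the form of the splitting using Theorem~\ref{thm_split_Lie} and then solve a small polynomial system for the coefficients $\tanh t$, $\sinh^2 t$, $\sinh(2t)$ and $\alpha_t$, or check the identity directly with a formal computation software. This symbolic matrix computation is the main obstacle, but it is entirely mechanical.
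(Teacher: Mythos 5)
Your proposal follows essentially the same route as the paper: reduce via Proposition \ref{prop_key} to a factorization of Hamiltonian flows in $\mathrm{Sp}_4(\mathbb{C})$, check that matrix identity symbolically, and deduce nonnegativity of $A_t$ from the positive diagonal entry $\tfrac12\sinh 2t$ together with $\det A_t\geq 0$, which both you and the paper trace to $t\geq\tanh t$. The differences are purely cosmetic: the paper writes the flows as $\Phi_1$ of symbols scaled by $t$ rather than your $\Phi_t$ of symbols scaled by $1/t$, and your separate verification that $\alpha_t\geq 0$ is already implied by the other two checks for a $2\times2$ symmetric matrix.
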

\begin{proof} Applying Proposition \ref{prop_key}, we just have to check that $A_t$ is non negative for $t\geq 0$ and that
$$
\forall t \geq 0, \ \Phi_t^{v^2+\eta^2 + i v\xi} = \Phi_1^{\frac12 \tanh t \ v^2}  \Phi_1^{ (\xi,\eta) A_t \transp{(\xi,\eta)} } \Phi_1^{ i \tanh t \ v\xi} \Phi_1^{\frac12 \tanh t \ v^2}.
$$
This factorization, whose form can be deduced a priori using Theorem \ref{thm_split_Lie}, can be verified by a formal computation software. Since the second diagonal coefficient of $A_t$ is positive, to check that $A_t$ is non negative, it is enough to prove that $\det A_t\geq0$ for $t>0$. Thus, we conclude this proof observing that by Jensen's inequality and some elementary trigonometric computations
$$
\det A_t =  \frac14 \tanh(t) (t - \tanh(t)) \geq 0.
$$

\end{proof}

\section{Appendix}

\subsection{Proof of Lemma \ref{lem_in_fact_its_triangular}}
\label{pippo}

Here, the proof relies essentially on computations by block requiring to introduce a more convenient basis than
 the canonical basis of $\mathbb{C}^{2n+2}$ denoted by $e_1,\dots,e_{2n+2}$. This basis, denoted $\mathscr{B}$, is just a permutation of the canonical basis and is defined by 
  \begin{equation}
  \label{good_basis}
  \mathscr{B} = (e_1,\dots,e_n,e_{n+2},\dots,e_{2n},e_{2n+1},e_{n+1},e_{2n+2}).
  \end{equation}
In this basis the matrix of $J_{2n+2}$ is
$$
\mathrm{mat}_{\mathscr{B}} \ J_{2n+2} = \begin{pmatrix} J_{2n} & \\ & J_2
\end{pmatrix}
$$
and the matrix of $\mathbb{P}p_j$ is
$$
\mathrm{mat}_{\mathscr{B}}\ \mathbb{P}p_j = \begin{pmatrix} Q^{(j)} &  \frac12 \transp L^{(j)} & \\
											      \frac12 L^{(j)} & c_j &\\
											      & & 0
\end{pmatrix},
$$
Consequently, in this basis the matrix of the Hamiltonian map is
$$
\mathrm{mat}_{\mathscr{B}}\ J_{2n+2} \mathbb{P}p_j = \begin{pmatrix} J_{2n}Q^{(j)} & \frac12 \Upsilon_{q_j} J_{2n} \transp L^{(j)} & \\
												 & 0 &  \\
											      -\frac12  L^{(j)} & -c_j & 0
											      
\end{pmatrix}.
$$
Here, it is really relevant to observe the double triangular structure of this matrix. The four block on the top left corner defines an upper triangular matrix by blocks, whereas, considering these four blocks as a single block, the global matrix is lower triangular by blocks.

Now observe, through the power expansion series, that if $\Psi$ is an entire function and 
$$M = \begin{pmatrix} A & B \\ & 0 \end{pmatrix}$$
 is an upper triangular matrix by blocks then
$$
\Psi(M) = \begin{pmatrix} \Psi(A) & \left( \frac{\Psi(z)-\Psi(0)}z \right) (A) B \\ & \Psi(0) \end{pmatrix}.
$$

Consequently, we have
$$
\mathrm{mat}_{\mathscr{B}} \ \Phi_1^{\mathbb{P}p_{j}} = \begin{pmatrix} \Phi_1^{q_{j}} &-i\Upsilon_{q_j} J_{2n} \transp L^{(j)} & \\
												 & 1 &  \\
											      iL^{(j)}\Upsilon_{q_j}  & 2i\widetilde{\kappa_j}+2i c_j & 1
											      
\end{pmatrix}$$
where
$$
\widetilde{\kappa_j} = -\frac{i}2 L^{(j)} \Theta_{q_j}  J_{2n} \transp{L^{(j)}} \ \ \mathrm{with} \ \ \Theta _{q_j} = \left( \frac{e^z-1-z}{z^2} \right)(-2iJ_{2n} Q^{(j)}).
$$
At the end of the proof, we will check that $\widetilde{\kappa_j} = \kappa_j$, so for the moment assume that this relation holds.

Thus, realizing a product by block we get by a straightforward induction
$$
\mathrm{mat}_{\mathscr{B}} \ \prod_{j=1}^m \Phi_1^{\mathbb{P}p_{j}} = \begin{pmatrix} \displaystyle \prod_{j=1}^m \Phi_1^{q_{j}} &\displaystyle -i \sum_{j=1}^m \left(\prod_{k<j} \Phi_1^{q_{k}}\right)  \Upsilon_{q_j} J_{2n} \transp L^{(j)} & \\
												 & 1 &  \\
											       \displaystyle i \sum_{j=1}^m L^{(j)}\Upsilon_{q_j} \prod_{k>j} \Phi_1^{q_{k}}  & \displaystyle 2i \sum_{j=1}^m \kappa_j + c_j + \sigma_j & 1
											      
\end{pmatrix}.
$$
Identifying the blocks $(1,1),(3,1),(3,2)$ with those of $\mathrm{mat}_{\mathscr{B}} \  \Phi_1^{\mathbb{P}p_{m+1}}$ we get the system \eqref{terrible_system}. Conversely, we have to check that if \eqref{terrible_system} is satisfied then the blocks $(1,2)$ are the same. Indeed, consider a complex symplectic matrix $M\in \mathrm{Sp}_{2n}(\mathbb{C})$ with a block structure of the form
$$
\mathrm{mat}_{\mathscr{B}} \ M = \begin{pmatrix} A & B \\ & 1 \\
									    C & d & 1 \end{pmatrix}.$$
Note that since $M$ is symplectic, $M$ is invertible and consequently $A$ is also invertible. Since $M$ is symplectic, then is satisfies
\begin{equation}
\label{hoho}
M = -J_{2n} \transp M^{-1} J_{2n}.
\end{equation}
However, due to the double triangular nature of $M$, its invert can be computed easily and we have
$$
\mathrm{mat}_{\mathscr{B}} \ M^{-1} = \begin{pmatrix} A^{-1} & -A^{-1}B \\ & 1 \\
									    -CA^{-1} & CA^{-1}B-d & 1 \end{pmatrix}.
$$ 
Consequently, a straightforward block product leads to
$$
-  \mathrm{mat}_{\mathscr{B}} J_{2n} \transp M^{-1} J_{2n} = \begin{pmatrix} -J_{2n}\transp{A}^{-1}J_{2n} & -J_{2n}\transp{A}^{-1}\transp{C} \\ & 1 \\
									   - \transp{B} \transp{A}^{-1}J_{2n} & d-CA^{-1}B & 1 \end{pmatrix}.
$$
Thus, since $M$ is symplectic, we have
$$
B = -J_{2n}\transp{A}^{-1}\transp{C}.
$$
A fortiori, if two symplectic matrices have this block structure and the same top left corner blocks, if their blocks $(3,1)$ are equal then their blocks $(1,2)$ are equal. Consequently, applying this to the symplectic matrices $\Phi_1^{\mathbb{P}p_{1}} \dots \Phi_1^{\mathbb{P}p_{m}}$ and $\Phi_1^{\mathbb{P}p_{m+1}}$, we deduce that if the system \eqref{terrible_system} is satisfied then we have the factorization \eqref{the_aff_fact}.

Finally, we just have to check that $\widetilde{\kappa_j} = \kappa_j$. For this computation, we omit the indices $j$ since they are clearly irrelevant. First, we split the even indices from the odd indices in the power expansion of $\widetilde{\kappa}$ :
\begin{multline*}
2i\widetilde{\kappa} = L \Theta_{q}  J_{2n} \transp{L} = \sum_{k\in \mathbb{N}}\frac1{(k+2) !} L (-2iJ_{2n} Q)^k J_{2n} \transp{L} \\
= \sum_{k \in \mathbb{N}}\frac1{(2k+2) !} L (-2iJ_{2n} Q)^{2k} J_{2n} \transp{L}  + \sum_{k\in \mathbb{N}}\frac1{(2k+3) !} L (-2iJ_{2n} Q)^{2k+1} J_{2n} \transp{L} \\
:= \Sigma_{even} + \Sigma_{odd}.
\end{multline*}
Observing that 
$$
(J_{2n}Q )^{2k}J_{2n} = (J_{2n} Q)^{k} J_{2n} (Q J_{2n} )^{k},
$$
since $J_{2n}$ is skew-symmetric we have
$$
L (J_{2n}Q )^{2k}J_{2n}  \transp{L} = (-1)^k L  (J_{2n} Q)^{k}   J_{2n}  \transp{L  (J_{2n} Q)^{k}} =0.
$$
Consequently, $\Sigma_{even}$ vanishes. Similarly, since we have
$$
(J_{2n}Q )^{2k+1}J_{2n} = (J_{2n}Q)^k J_{2n} Q (J_{2n}Q)^k J_{2n} = (-1)^{k+1} (J_{2n}Q)^k J_{2n} Q \transp{((J_{2n}Q)^k J_{2n} )},
$$
we get
$$
\Sigma_{odd} = 2i \sum_{k \in \mathbb{N}} \frac{4^k}{(2k+1)!} L(J_{2n}Q)^k J_{2n} Q \transp{((J_{2n}Q)^k J_{2n} )} \transp L = 2i \kappa.
$$

\subsection{Proof of Theorem \ref{thm_universal}}
\label{sub_proof_thm_universal}
Before proving Theorem \ref{thm_universal}, let us to prove some preparatory lemmas.
\begin{lemma}
\label{calc_meta}
If $L$ is a bounded operator on $L^2(\mathbb{R}^n)$ such that there exists $T\in \mathrm{Sp}_{2n}(\mathbb{R})$ satisfying $L=\pm \mathscr{K}(T)$, then $L$ can be computed by an exact splitting.
\end{lemma}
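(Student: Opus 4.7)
The plan is to factor $T \in \mathrm{Sp}_{2n}(\mathbb{R})$ as a finite product of ``elementary'' real symplectic matrices, and then use the monoid morphism property of $\mathscr{K}$ (Theorem \ref{merci_lars}) to transfer the factorization to the operator level. The three families of elementary symplectic matrices that match the building blocks of Definition \ref{def_comput} are, for $S \in \mathrm{S}_n(\mathbb{R})$ and $A \in \mathrm{GL}_n(\mathbb{R})$,
\[
U(S) = \begin{pmatrix} I_n & S \\ & I_n \end{pmatrix}, \quad L(S) = \begin{pmatrix} I_n & \\ S & I_n \end{pmatrix}, \quad D(A) = \begin{pmatrix} A & \\ & \transp{A}^{-1} \end{pmatrix}.
\]

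First I would use a Bruhat-type decomposition of $\mathrm{Sp}_{2n}(\mathbb{R})$ to write $T$ as a finite product of matrices of the types $U(S_i)$, $L(S_i)$ and $D(A_i)$. When the top-left $n \times n$ block of $T$ is invertible this is the classical pre-Iwasawa (``$LDU$'') factorization $T = L(CA^{-1})\,D(A)\,U(A^{-1}B)$; otherwise one first multiplies $T$ on the right or the left by a suitable $U(S)$ to restore invertibility of the top-left block, which is always possible because the symplectic relations force the block $(A\ B)$ to have full row rank. Next I would identify each elementary factor via Theorem \ref{merci_lars}: the quadratic forms $q_U(x,\xi)=\frac{i}{2}\transp{\xi}S\xi$ and $q_L(x,\xi)=-\frac{i}{2}\transp{x}Sx$ both have vanishing real part and satisfy $\Phi_1^{q_U}=U(S)$, $\Phi_1^{q_L}=L(S)$, so $\mathscr{K}(U(S)) = \pm e^{ia(\nabla)}$ and $\mathscr{K}(L(S)) = \pm e^{ib(x)}$ with $a(\xi) = \frac12 \transp{\xi}S\xi$ and $b(x) = \frac12 \transp{x}Sx$, both of which already belong to the elementary list.

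The main step, and the main obstacle, is to show that $\mathscr{K}(D(A))$ can be computed by an exact splitting. Up to a sign and the Jacobian $|\det A|^{-1/2}$, this operator is the linear change of variable $u \mapsto u \circ A^{-1}$. To realize it, I would first reduce to $A \in \mathrm{SL}_n(\mathbb{R})$ by extracting the factor $\mathrm{diag}(1,\dots,1,\det A)$, and then apply Proposition \ref{prop_sluniv} to factor the $\mathrm{SL}_n(\mathbb{R})$ part as a product of one-dimensional shears; each shear becomes an operator of the form $e^{\alpha x_k \partial_{x_j}}$ via the characteristic formula \eqref{charac_formula}, which is in the allowed list. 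The residual one-dimensional dilation by $|\det A|$ is produced by an exact splitting via Proposition \ref{prop_dilat} when $\det A > 0$, and is combined with the reflection identity \eqref{fun_fun} (itself an instance of \eqref{split_exact_harm_osci}, hence already an exact splitting) when $\det A < 0$. Finally, the residual sign ambiguities coming from Theorem \ref{merci_lars} at each step, together with the global constant $|\det A|^{-1/2}$, are absorbed into a single factor $e^{\gamma}$ with $\gamma \in \mathbb{C}$. Sign tracking is expected to be the most delicate point, and I would handle it either by a continuity argument along 1-parameter deformations within $\mathrm{Sp}_{2n}(\mathbb{R})$, or more crudely by noting that any $-1$ factor is itself $e^{i\pi}$ and hence already in the allowed list.
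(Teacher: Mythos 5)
Your approach --- an explicit Bruhat/Gaussian-elimination factorization of $T$ into unipotent block-triangular matrices $U(S)$, $L(S)$ and block-diagonal matrices $D(A)$, together with Propositions~\ref{prop_sluniv} and~\ref{prop_dilat} to dispose of $D(A)$ --- is genuinely different from the paper's. The paper never produces a factorization of $T$ by hand: it introduces the subgroup $G\subset\mathrm{Sp}_{2n}(\mathbb{R})$ generated by the flows $\Phi_t^{ix_j^2}$, $\Phi_t^{i\xi_j^2}$, $\Phi_t^{ix_j\xi_k}$, and proves $G=\mathrm{Sp}_{2n}(\mathbb{R})$ by a topological argument (the Baker--Campbell--Hausdorff formula plus the Inverse Function Theorem show that $G$ contains a neighbourhood of the identity, and $G$ is then simultaneously open and closed in the connected group $\mathrm{Sp}_{2n}(\mathbb{R})$). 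Theorem~\ref{merci_lars} then transfers the group factorization to the operator side exactly as in your argument, and Proposition~\ref{prop_dilat} is invoked to split the dilatation factors $e^{tx_j\partial_{x_j}}$ just as you use it for $|\det A|$. Your route is more constructive and yields a bound on the number of factors; the paper's is shorter and sidesteps the case analysis on the invertibility of the top-left block. Both approaches work.

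Two points in your write-up need tightening. First, right-multiplication $T\,U(S)$ leaves the top-left block $A$ unchanged; to shift it to $A+SC$ you must multiply on the \emph{left} by $U(S)$ (or on the right by $L(S)$, which gives $A+BS$), and the full-rank block that makes the construction possible is the corresponding half of $T$. Second, and more substantively, the reflection $u\mapsto u(-x_1)$ arising when $\det A<0$ is \emph{not} a direct instance of \eqref{split_exact_harm_osci}: the identity \eqref{fun_fun} corresponds to $\delta_t=-i\pi/2$, where $\tanh$ in \eqref{split_exact_harm_osci} diverges. An extra step is needed --- for instance halving the time, $e^{\frac{i\pi}2(x^2-\partial_x^2)}=\bigl(e^{\frac{i\pi}4(x^2-\partial_x^2)}\bigr)^2$, and applying \eqref{split_exact_harm_osci} at $\delta_t=-i\pi/4$ where $\tanh(-i\pi/4)=-i$ --- or, more cleanly, observing that $D(\mathrm{diag}(-1,1,\dots,1))$ acts as $-I_2$, i.e.\ a rotation by $\pi$, on the $(x_1,\xi_1)$-plane and trivially elsewhere, hence is the square of the three-shear factorization \eqref{magic_formula} at $\theta=\pi/2$, which avoids Proposition~\ref{prop_dilat} entirely for the sign part.
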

\begin{proof} Let $G$ be the group generated by $\Phi_t^{i x_j^2},\Phi_t^{i \xi_j^2},\Phi_t^{i x_j \xi_k}$ for $t\in \mathbb{R}, j,k\in \llbracket1,n \rrbracket$. Applying Theorem \ref{merci_lars}, if we prove that $G= \mathrm{Sp}_{2n}(\mathbb{R})$,
we deduce that $L$ is a product of operators of the form $e^{it x_j^2 },e^{it \partial_{x_j}^2},e^{t x_j \partial_{x_k}}$ (up to the sign). Thus, since Proposition \ref{prop_dilat} states that dilatations (i.e. operators of the form $e^{t x_k \partial_k}$) can be factorized similarly, we would deduce that $L$  can be computed by an exact splitting.

Consequently, we aim at proving that $G= \mathrm{Sp}_{2n}(\mathbb{R})$. First, let us prove that $G$ contains a neighborhood of the identity in $\mathrm{Sp}_{2n}(\mathbb{R})$.

Indeed, consider the map $\Psi : \mathcal{N} \to \mathfrak{sp}_{2n}(\mathbb{R})$, where $\mathcal{N}$ is  a neighborhood of the origin in $\mathfrak{sp}_{2n}(\mathbb{R})$, defined for $Q\in S_{2n}(\mathbb{R})$ such that $J_{2n}Q \in \mathcal{N}$ by 
\begin{multline}
\label{terrible_log}
\Psi(J_{2n}Q) = \\
\log\left( \prod_{j=1}^{n} \Phi^{ i A_{j,j} x_j^2}_{1/2} \prod_{1\leq j < k \leq n} \Phi^{ i A_{j,k} x_j x_k}_{1}  \prod_{j=1}^n \prod_{k=1}^{n} \Phi^{ i C_{j,k} x_j \xi_k}_{1}  \prod_{1 \leq j < k \leq n} \Phi^{ i B_{j,k} \xi_j \xi_k}_{1} \prod_{j=1}^{n} \Phi^{ i B_{j,j} \xi_j^2}_{1/2} \right),
\end{multline}
where the natural block decomposition of $Q$ is
$$
Q=\begin{pmatrix} A & C \\ \transp{C} & B
\end{pmatrix}
$$
with $A,B\in S_{n}(\mathbb{R})$ and $C\in M_n(\mathbb{R})$. Note that to prove that $\Psi$ takes its values in $\mathfrak{sp}_{2n}(\mathbb{R})$, it is enough to apply the Baker--Campbell--Hausdorff formula. 

Since the differential of the exponential in the origin and the differential of the logarithm in the identity are equal to the identity, we deduce by composition that the differential of $\Psi$ in the origin is also the identity. Thus, since $\Psi$ vanishes in the origin, we deduce of the Inverse Function Theorem that $\Psi$ defines a local homeomorphism around the origin. Furthermore, we recall\footnote{the reader could refer, for example, to Lemma 3.10 in \cite{AB} for a detailed proof.} that the exponential is an homeomorphism between a neighborhood of the origin in $\mathfrak{sp}_{2n}(\mathbb{R})$ and a neighborhood of the identity in $\mathrm{Sp}_{2n}(\mathbb{R})$.  Consequently, we deduce that each matrix in $\mathrm{Sp}_{2n}(\mathbb{R})$ close enough to the identity can be written as a product of the form of the product in the logarithm in \eqref{terrible_log}. A fortiori, we have proven that $G$ contains a neighborhood of the identity in $\mathrm{Sp}_{2n}(\mathbb{R})$. Let $V$ denotes this neighborhood.

Since $\mathrm{Sp}_{2n}(\mathbb{R})$ is connected (see e.g. the subsection 4.4 of \cite{Arnold_book}), to prove that $G= \mathrm{Sp}_{2n}(\mathbb{R})$, we just have to prove that $G$ is closed and open in $\mathrm{Sp}_{2n}(\mathbb{R})$. Indeed, if $g\in G$ then $gV$ is a neighborhood of $g$ in $\mathrm{Sp}_{2n}(\mathbb{R})$ and since $V$ is included in $G$ then $gV$ is also included in $G$. Thus $G$ is open in $\mathrm{Sp}_{2n}(\mathbb{R})$. Conversely, if $g\notin G$ then $gV$ is also a neighborhood of $g$ in $\mathrm{Sp}_{2n}(\mathbb{R})$ but since $G$ is a group we have $gV \cap G = \emptyset$. Consequently, the complementary of $G$ in $\mathrm{Sp}_{2n}(\mathbb{R})$ is open, i.e. $G$ is closed in $\mathrm{Sp}_{2n}(\mathbb{R})$, which conclude the proof.
\end{proof}

\begin{lemma}
\label{split_aff_im} If $\ell:\mathbb{R}^{2n}\to \mathbb{R}$ is a real linear form then $\exp(i \ell^w)$ can be computed by an exact splitting.
\end{lemma}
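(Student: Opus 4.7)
The plan is to work out $\ell^w$ explicitly in coordinates and then apply a Heisenberg-type Baker--Campbell--Hausdorff identity that truncates after one bracket. Writing $\ell$ in the canonical coordinates of $\mathbb{R}^{2n}$ as $\ell(x,\xi) = a\cdot x + b\cdot \xi$ with $a,b\in \mathbb{R}^n$, the definition of the Weyl quantization recalled in the introduction gives
\begin{equation*}
\ell^w = a\cdot x - i\, b\cdot \nabla,
\end{equation*}
so that $i\ell^w = A+B$ with $A := i\,a\cdot x$ a pure multiplication operator and $B := b\cdot \nabla$ a constant-coefficient differential operator.

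Next I would compute the commutator $[A,B]$. Using the Heisenberg canonical relation $[x_j,\partial_{x_k}] = -\delta_{jk}$, a direct computation yields $[A,B] = -i\,a\cdot b$, a scalar (hence central) operator. Since $[A,B]$ commutes with both $A$ and $B$, the Baker--Campbell--Hausdorff formula truncates after a single bracket and gives
\begin{equation*}
e^{i\ell^w} = e^{A+B} = e^{i\,a\cdot b/2}\, e^{i\, a\cdot x}\, e^{\,b\cdot\nabla}.
\end{equation*}

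Finally, since the operators $\{ia_j x_j\}_{j=1}^n$ pairwise commute, and likewise for $\{b_j\partial_{x_j}\}_{j=1}^n$, each of the middle exponentials splits into a product of one-dimensional factors:
\begin{equation*}
e^{i\,a\cdot x} = \prod_{j=1}^n e^{i a_j x_j}, \qquad e^{\,b\cdot \nabla} = \prod_{j=1}^n e^{b_j \partial_{x_j}}.
\end{equation*}
Each of these factors, together with the prefactor $e^{ia\cdot b/2}$, is of one of the elementary forms $e^{i\alpha x_j}$, $e^{\alpha \partial_{x_j}}$, and $e^\gamma$ allowed by Definition \ref{def_comput}. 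This exhibits $e^{i\ell^w}$ as an exact splitting and finishes the proof. There is no real obstacle here; the entire content is the fact that the canonical commutator between position and momentum is a scalar, which makes BCH terminate and removes any need for the heavier machinery of Section \ref{sec_FIO}.
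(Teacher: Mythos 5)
Your argument is correct and arrives at exactly the same factorization as the paper's proof, namely
\begin{equation*}
e^{i\ell^w}=e^{ia\cdot b/2}\prod_{j=1}^n e^{ia_j x_j}\prod_{j=1}^n e^{b_j\partial_{x_j}},
\end{equation*}
but it reaches it by a genuinely different route. The paper proves the corresponding identity at the level of the Hamiltonian flows $\Phi_t^{\mathbb{P}(\cdot)}$ using the block-triangular structure exposed in Lemma \ref{lem_in_fact_its_triangular}, and then transfers it to the operator level via Proposition \ref{prop_key}, i.e.\ via the Fourier Integral Operator machinery of Theorem \ref{merci_lars}. You instead observe that the Lie algebra spanned by $ia\cdot x$, $b\cdot\nabla$, and scalars is a Heisenberg algebra whose only nonzero bracket is central, so Baker--Campbell--Hausdorff truncates after one term. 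This is more elementary and self-contained; what the paper's route buys in exchange is uniformity — one single quantization mechanism (flow factorization plus Proposition \ref{prop_key}) handles this lemma, Lemma \ref{lem_split_hom}, Lemma \ref{lem_translat}, and the splittings of Section \ref{sec_app}, rather than requiring a bespoke argument for each.

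One small caveat worth acknowledging in your write-up: $A=ia\cdot x$ and $B=b\cdot\nabla$ are unbounded, so invoking ``the'' BCH formula is formally a leap. Here it is harmless because the resulting identity is just the integrated Weyl commutation relation, which can be verified directly in one line by the method of characteristics for the transport-with-phase equation $\partial_t u = ia\cdot x\, u + b\cdot\nabla u$: the solution at $t=1$ is explicitly $e^{ia\cdot b/2}\,e^{ia\cdot x}\,u_0(\cdot+b)$, which is precisely the right-hand side. Inserting a sentence to that effect would make the proof fully rigorous without reference to any functional-calculus version of BCH.
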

\begin{proof}
Let $L\in \mathbb{R}^{2n}$ be the matrix of $\ell$ and let $c=c_1$ where
$$
c_t =  \frac{t}2  \sum_{j=1}^{n} L_j L_{j+n}.  
$$ 
Applying Lemma \ref{lem_in_fact_its_triangular}, we have
$$
\forall t\in \mathbb{R}, \ \ \Phi^{-i \mathbb{P} c_t }_t \prod_{j=1}^n \Phi^{-i  L_j \mathbb{P} x_j }_t  \prod_{j=1}^n \Phi^{-i  L_{j+n} \mathbb{P} \xi_j }_t = \Phi^{-i  \mathbb{P} \ell}_t.
$$
Consequently, applying Proposition \ref{prop_key} at $t=1$, we get
$$
e^{i \ell^w} = e^{i c_t} \prod_{j=1}^n e^{i L_j x_j} \prod_{j=1}^n e^{ L_j \partial_{x_j} }.
$$
\end{proof}

\begin{lemma} 
\label{lem_split_hom}If $p:\mathbb{R}^{2n}\to \mathbb{R}$ is a real valued polynomial of degree $2$ or less then there exists a real linear form $\ell:\mathbb{R}^{2n}\to \mathbb{R}$ and $c\in \mathbb{R}$,  such that
\begin{equation}
\label{res_lem_split_hom}
e^{ip^w} =e^{ic} e^{iq^w} e^{i\ell^w},
\end{equation}
where $q$ is the quadratic part of $p$.
\end{lemma}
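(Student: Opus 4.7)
The approach is to reduce the desired identity to a factorization of Hamiltonian flows, to which Lemma \ref{lem_in_fact_its_triangular} applies, and then transfer it to the semigroup level via Proposition \ref{prop_key}. Note that the symbol $-ip$ has identically vanishing real part, so the positivity required by Proposition \ref{prop_key} holds trivially for every symbol that will appear. Decompose $p = q + \ell_0 + c_0$ into its quadratic, linear and constant parts.

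I seek a real linear form $\ell$ and a real constant $c$ satisfying the flow identity
\begin{equation*}
\Phi_1^{\mathbb{P}(-ic)}\,\Phi_1^{\mathbb{P}(-iq)}\,\Phi_1^{\mathbb{P}(-i\ell)} \;=\; \Phi_1^{\mathbb{P}(-ip)}.
\end{equation*}
By Lemma \ref{lem_in_fact_its_triangular} this reduces to a triangular system: the quadratic parts match trivially; the linear equation yields $\ell = \ell_0 \circ \Upsilon_{-iq}$; the scalar equation yields $c = c_0 + i\kappa_{m+1}$, since all $\sigma_j$ and the other $\kappa_j$ vanish because the linear parts $\ell_1, \ell_2$ are zero. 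The crucial point is that $\ell$ and $c$ are both real. The matrix of $\Upsilon_{-iq}$ equals $\phi(-2J_{2n}Q)$ where $\phi(z) = (e^z-1)/z$ is entire with real Taylor coefficients and $-2J_{2n}Q$ is a real matrix, so $\Upsilon_{-iq}$ is real and $\ell$ is a real linear form. For the scalar, a direct inspection of the series defining $\kappa_{m+1}$, using $q(\lambda v) = \lambda^2 q(v)$ together with the factors $(-i)^{k+1}$ produced by $(J_{2n}Q^{(m+1)})^k J_{2n}\transp{L^{(m+1)}}$ and the overall factor $-i$ in $q_{m+1} = -iq$, shows that $\kappa_{m+1}$ is purely imaginary, so $i\kappa_{m+1}$ is real.

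Finally, to apply Proposition \ref{prop_key}, which demands the flow factorization on an interval of times rather than at a single point, I introduce the rescaled family obtained by repeating the previous construction with $tp$ in place of $p$. This produces a real linear form $\ell_t = t\,\ell_0 \circ \Upsilon_{-itq}$ and a real constant $c_t$ depending analytically on $t \in [0,1]$, giving $\Phi_t^{\mathbb{P}(-ip)} = \Phi_t^{\mathbb{P}(-ic_t/t)}\Phi_t^{\mathbb{P}(-iq)}\Phi_t^{\mathbb{P}(-i\ell_t/t)}$. Setting $p_{1,t} := -ic_t/t$, $p_{2,t} := -iq$, $p_{3,t} := -i\ell_t/t$ and $p_{4,t} := -ip$ yields a continuous family on $[0,1]$, since the explicit formulas for $\ell_t$ and $c_t$ show that $\ell_t/t$ and $c_t/t$ extend analytically to $t = 0$ (with limits $\ell_0$ and $c_0$ respectively). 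Proposition \ref{prop_key} evaluated at $t = 1$ then gives \eqref{res_lem_split_hom} with $\ell = \ell_1$ and $c = c_1$. The only delicate step is the reality check for $\ell$ and $c$; once that is secured, everything else is a mechanical assembly of the two quoted results with a routine rescaling.
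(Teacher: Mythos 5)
Your proof is correct and takes essentially the same route as the paper's: reduce to the triangular system of Lemma~\ref{lem_in_fact_its_triangular}, read off $\ell = \ell_0\circ\Upsilon_{-iq}$ and $c = c_0 + i\kappa_{m+1}$, check reality, and pass to the semigroup level via Proposition~\ref{prop_key} using the rescaled family in $t$. The paper merely writes the time-$t$ identity $\Phi_t^{-i\mathbb{P}c_t}\Phi_t^{-i\mathbb{P}q}\Phi_t^{-i\mathbb{P}\ell_t}=\Phi_t^{-i\mathbb{P}p}$ directly with $\ell_t,c_t$ already defined as analytic (hence continuous at $t=0$) functions, while you first solve at $t=1$ and then rescale; these are the same argument, and your reality checks for $\Upsilon_{-iq}$ and $\kappa_{m+1}$ are correct.
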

\begin{proof} Let $l$ be the linear part of $p$. Considering the natural action of the entire functions on $M_{2n}(\mathbb{C})$, let $\ell=\ell_1$ and $c=c_1$ where
$$
\ell_t = l \circ \left(\frac{e^z-1}z \right) (- 2 t J_{2n} Q) \ \mathrm{and} \ c_t= p(0) - t^2 \sum_{k\in \mathbb{N}} \frac{(-4)^k t^{2k} }{(2k+3)!} \ q( (J_{2n} Q)^k J_{2n} \transp L)
$$
with $Q\in S_{2n}(\mathbb{R})$, the matrix of $q$ and $L$ the matrix of $l$. Applying Lemma \ref{lem_in_fact_its_triangular}, we have
$$
\forall t\in \mathbb{R}, \ \ \Phi^{-i \mathbb{P} c_t }_t \Phi^{-i  \mathbb{P} q}_t  \Phi^{-i \mathbb{P} \ell_t }_t = \Phi^{-i  \mathbb{P} p}_t.
$$
Consequently, applying Proposition \ref{prop_key} at $t=1$, we get \eqref{res_lem_split_hom}.
\end{proof}

\begin{lemma} \label{lem_translat}If $p:\mathbb{R}^{2n}\to \mathbb{R}$ is a real valued polynomial of degree $2$ or less bounded below, then there exists a real valued linear form $\ell :\mathbb{R}^{2n}\to \mathbb{R}$ and $c\in \mathbb{R}$ such that
\begin{equation}
\label{res_lem_translat}
e^{-p^w} = e^{-c}  e^{-i\ell^w} e^{-q^w} e^{i\ell^w},
\end{equation}
where $q$ is the quadratic part of $p$.
\end{lemma}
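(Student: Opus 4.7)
Since $p$ is bounded below on $\mathbb{R}^{2n}$, Lemma \ref{lem_struc_pos} furnishes a decomposition $p(X) = q(X-Y) + \tilde c$ with $q$ the (nonnegative) quadratic part of $p$, $Y \in \mathbb{R}^{2n}$, and $\tilde c \in \mathbb{R}$. Writing $Q$ for the symmetric matrix of $q$, the linear part $\ell_1^{(p)}$ of $p$ has coefficient vector $-2QY$, so in particular it lies in $\mathrm{Im}\,Q$.

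The plan is to mimic the proofs of Lemmas \ref{split_aff_im} and \ref{lem_split_hom}: first exhibit a Hamiltonian-flow factorization
\begin{equation*}
  \Phi_1^{i\mathbb{P}\ell}\,\Phi_1^{\mathbb{P}q}\,\Phi_1^{-i\mathbb{P}\ell} \;=\; \Phi_1^{\mathbb{P}(p-c)}
\end{equation*}
for a suitable real linear form $\ell$ and real constant $c$, and then pass to the semigroup level via Proposition \ref{prop_key}. I take $\ell$ to be the real linear form on $\mathbb{R}^{2n}$ with coefficient vector $L := -J_{2n}Y$. Using $J_{2n}^2 = -I_{2n}$, a direct computation of the Poisson bracket shows that $\{\ell, q\} = -\ell_1^{(p)}$.

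I then apply Lemma \ref{lem_in_fact_its_triangular} with $m=3$ and the symbols $p_1 = i\ell$, $p_2 = q$, $p_3 = -i\ell$, $p_{m+1} = p-c$. The quadratic-form identity is automatic since $q_1 = q_3 = 0$. The linear-form identity, using the built-in relation $\Phi_1^q - I = -2iJ_{2n}Q\,\Upsilon_q$ and canceling $\Upsilon_q$ algebraically, reduces to $-2QJ_{2n}L = -2QY$, which holds by our choice of $L$. The constant-form identity then becomes a single linear equation for $c$; thanks to the parity identities used in the proof of Lemma \ref{lem_in_fact_its_triangular} (in particular $L^T(J_{2n}Q)^{2k}J_{2n}L = 0$ for all $k \geq 0$, which forces the series defining $\sigma_3$ to be a real number), this equation uniquely determines a real value of $c$.

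Finally, I invoke Proposition \ref{prop_key} at $t=1$: the real parts of $i\ell$, $q$, $-i\ell$, and $p-c$ are, respectively, $0$, $q$, $0$, and $p-c$, all bounded below (using $q\geq 0$ and $p$ bounded below). We conclude
\begin{equation*}
  e^{-i\ell^w}\,e^{-q^w}\,e^{i\ell^w} \;=\; e^{-(p-c)^w} \;=\; e^{c}\,e^{-p^w},
\end{equation*}
which rearranges to the desired factorization \eqref{res_lem_translat}. The main conceptual point, and the only place where the hypothesis ``$p$ bounded below'' is used, is to guarantee via Lemma \ref{lem_struc_pos} that $\ell_1^{(p)} \in \mathrm{Im}\,Q$, which is precisely what makes $\{\ell, q\} = -\ell_1^{(p)}$ solvable for a real $\ell$; without this, the proposed conjugation would not close up.
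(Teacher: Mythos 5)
Your route is genuinely different from the paper's. The paper's own proof also starts from Lemma \ref{lem_struc_pos} and the same linear form $\ell$ with coefficient vector $-J_{2n}Y$, but it then constructs the Hamiltonian-flow factorization by a conjugation argument: using the explicit action of $\Phi_t^{-i\mathbb{P}\ell}$ as a translation in the $\mathscr{B}$-basis, one checks $\mathbb{P}(q+c)\circ\Phi_t^{-i\mathbb{P}\ell}=\mathbb{P}p_t$ with $p_t=q(\,\cdot-tY)+p(Y)$, and then Proposition \ref{prop_elementary_ham}~(iii) converts this change of variables directly into the product $\Phi_t^{\mathbb{P}c}\Phi_t^{i\mathbb{P}\ell}\Phi_t^{\mathbb{P}q}\Phi_t^{-i\mathbb{P}\ell}=\Phi_t^{\mathbb{P}p_t}$ for all $t$, where the right-hand symbol is allowed to depend on $t$ and specializes to $p$ at $t=1$. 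You instead run the general machinery of Lemma \ref{lem_in_fact_its_triangular}; the computations you sketch (cancellation of $\Upsilon_q$, reality of $\sigma_3$ via the odd/even-parity lemma, and the reduction to $-2QJ_{2n}L=-2QY$) are correct and do produce the $t=1$ identity $\Phi_1^{i\mathbb{P}\ell}\Phi_1^{\mathbb{P}q}\Phi_1^{-i\mathbb{P}\ell}=\Phi_1^{\mathbb{P}(p-c)}$. The paper's conjugation argument is more transparent about why the factorization closes up, whereas yours is more systematic and would adapt to other conjugation patterns; both correctly locate where the boundedness hypothesis is used (to force the linear part of $p$ into $\mathrm{Im}\,Q$).

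There is, however, a genuine gap in the final step. Proposition \ref{prop_key} requires the Hamiltonian factorization $\Phi_t^{\mathbb{P}p_{1,t}}\cdots\Phi_t^{\mathbb{P}p_{m,t}}=\Phi_t^{\mathbb{P}p_{m+1,t}}$ to hold for \emph{all} $t$ in an interval $[0,t_0]$, with symbols depending continuously on $t$; it cannot be invoked from a single-time identity. With your fixed symbols $i\ell,q,-i\ell,p-c$, the identity $\Phi_t^{i\mathbb{P}\ell}\Phi_t^{\mathbb{P}q}\Phi_t^{-i\mathbb{P}\ell}=\Phi_t^{\mathbb{P}(p-c)}$ fails for $t\neq 1$ (the linear part produced on the right scales like $-2tQY$, not $-2QY$). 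The repair is exactly what the paper does: let the right-hand-side symbol be a $t$-dependent real polynomial $p_{4,t}$ (defined via the factorization, or taken to be $q(\,\cdot\,-tY)+p(Y)$ as the paper does), note that its real part remains bounded below uniformly in $t$ and that it depends continuously on $t$, apply Proposition \ref{prop_key} to get the semigroup identity for all $t\in[0,t_0]$, and then specialize to $t=1$, which is $1\in[0,t_0]$ by semigroup iteration if needed (compare the wrap-up in the proof of Theorem \ref{thm_universal}). As written, your proof jumps over this reduction.
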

\begin{proof} 
Applying Lemma \ref{lem_struc_pos}, we get $Y\in \mathbb{R}^{2n}$ such that $p=p_1$ where
$$
p_t = q(\cdot \ - \ tY)+p(Y).
$$
and $X=(x_1,\dots,x_n,\xi_1,\dots,\xi_n)$.

Let $\ell = -\transp{(J_{2n}Y)}X$ and $c=p(Y)$. Let $\mathscr{B}$ be the basis introduce in the proof of Lemma \ref{lem_in_fact_its_triangular} and defined by \eqref{good_basis}.
Observing that 
$$
\mathrm{mat}_{\mathscr{B}} J_{2n+2}\nabla \mathbb{P}\ell = \begin{pmatrix} x_{n+1} Y \\ 0 \\ \ell \end{pmatrix},
$$
we deduce that
$$
(\mathrm{mat}_{\mathscr{B}} \Phi^{-i \mathbb{P}\ell}_t) \begin{pmatrix} X \\ x_{n+1} \\ \xi_{n+1}\end{pmatrix} =  \begin{pmatrix} X - t x_{n+1}  Y \\ x_{n+1} \\ \xi_{n+1} - t \ell \end{pmatrix}.
$$
Consequently, we have
$$
\mathbb{P}(q + c ) \circ \Phi^{-i \mathbb{P}\ell}_t = \mathbb{P}p_t. 
$$
However, $\Phi^{-i \mathbb{P}\ell}_t$ is a symplectic map, so we have
$$
\Phi^{i \mathbb{P}\ell}_t \Phi_t^{ \mathbb{P}(q + c)}\Phi^{-i \mathbb{P}\ell}_t = \Phi_t^{ \mathbb{P}(q + c)\circ \Phi^{-i \mathbb{P}\ell}_t} = \Phi_t^{ \mathbb{P}p_t}.
$$
Now observing that the Hamiltonian $\mathbb{P} c$ commutes (i.e. for the canonical Poisson bracket) with all the other Hamiltonians, we deduce of the Noether theorem (or of Lemma \ref{lem_in_fact_its_triangular}) that
$$
\forall t\in \mathbb{R}, \ \Phi_t^{ \mathbb{P}c}  \Phi^{i \mathbb{P}\ell}_t \Phi_t^{ \mathbb{P}q }\Phi^{-i \mathbb{P}\ell}_t = \Phi_t^{ \mathbb{P}p_t}.
$$
Consequently, applying Proposition \ref{prop_key} at $t=1$, we get \eqref{res_lem_translat}.
\end{proof}

\begin{lemma}
\label{lem_pos_quad_comp} If $q:\mathbb{R}^{2n}\to \mathbb{R}$ is a non-negative real quadratic form then $\exp(-q^w)$ can be computed by an exact splitting.
\end{lemma}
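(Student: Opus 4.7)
The plan is to reduce $q$ to a decoupled sum of one-dimensional elementary modes via a real linear symplectic change of variable, handle each mode with an explicit formula, and conjugate back.

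First, I would establish a Williamson-type normal form for possibly degenerate nonnegative forms: for any nonnegative real quadratic form $q$ on the symplectic space $(\mathbb{R}^{2n},\omega)$, there exists $T\in\mathrm{Sp}_{2n}(\mathbb{R})$ such that
\[
q\circ T(x,\xi) \;=\; \sum_{j=1}^{r}\omega_j(x_j^2+\xi_j^2) \;+\; \sum_{j\in I}x_j^2 \;+\; \sum_{j\in J}\xi_j^2,
\]
with $\omega_j>0$ and $I,J\subset\llbracket r+1,n\rrbracket$ disjoint. The strictly positive part of the matrix of $q$ is handled by the classical Williamson theorem; the kernel is $q$-isotropic, and pairing it with suitable conjugate isotropic directions under $\omega$ produces the remaining pure-position and pure-momentum squares. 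Since the summands involve pairwise disjoint coordinates they Poisson-commute, so by Proposition \ref{prop_elementary_ham}(ii) and Theorem \ref{merci_lars} the associated semigroups commute and
\[
e^{-(q\circ T)^w} \;=\; \prod_{j=1}^{r} e^{-\omega_j(x_j^2-\partial_{x_j}^2)}\;\prod_{j\in I} e^{-x_j^2}\;\prod_{j\in J} e^{\partial_{x_j}^2}.
\]
Each harmonic factor splits via the exact harmonic-oscillator formula \eqref{split_exact_harm_osci} (evaluated at time $\omega_j>0$) into a product of $e^{-b(x)}$ and $e^{b(\nabla)}$ operators with $b\ge 0$ real quadratic, and the remaining factors already belong to the generators of Definition \ref{def_comput}. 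Hence $e^{-(q\circ T)^w}$ is computable by an exact splitting.

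To conclude, by Proposition \ref{prop_elementary_ham}(iii) we have $\Phi_t^{q\circ T}=T^{-1}\Phi_t^q T$; since $\Phi_t^q$ and $\Phi_t^{q\circ T}$ lie in $\mathrm{Sp}_{2n}^+(\mathbb{C})$ by Lemma \ref{lem_pos} and $T,T^{-1}\in\mathrm{Sp}_{2n}(\mathbb{R})\subset\mathrm{Sp}_{2n}^+(\mathbb{C})$, Theorem \ref{merci_lars} gives
\[
e^{-tq^w}\;=\;\varepsilon_t\,\mathscr{K}(T)\cdot e^{-t(q\circ T)^w}\cdot\mathscr{K}(T^{-1}), \qquad \varepsilon_t\in\{\pm 1\},
\]
and the sign is fixed to $+1$ by evaluating at $t=0$ together with a continuity argument in $t$ analogous to the one used in the proof of Proposition \ref{prop_key}. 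Both $\mathscr{K}(T)$ and $\mathscr{K}(T^{-1})$ are computable by exact splitting by Lemma \ref{calc_meta}, so is the three-fold composition; evaluating at $t=1$ gives the desired exact splitting for $e^{-q^w}$. The main obstacle is the normal-form statement itself in the degenerate case: while Williamson's theorem handles positive definite forms, treating the kernel of the matrix of $q$ requires a careful linear symplectic-algebra decomposition of $\mathbb{R}^{2n}$ via $\ker Q$ and its $\omega$-orthogonal $(\ker Q)^\omega$, together with an explicit choice of conjugate isotropic pairs producing the remaining pure $x_j^2$ and $\xi_j^2$ summands.
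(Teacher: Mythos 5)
Your proof follows essentially the same route as the paper: reduce $q$ by a real symplectic $T$ to decoupled one--dimensional harmonic modes plus a degenerate part, conjugate at the Fourier‑Integral‑Operator level via Theorem~\ref{merci_lars}, split each harmonic factor with \eqref{split_exact_harm_osci}, and use Lemma~\ref{calc_meta} for $\mathscr{K}(T^{\pm1})$. The one point you flag as the ``main obstacle'' --- a Williamson--type normal form for degenerate nonnegative forms --- is not something the paper re-derives: it cites Theorem~21.5.3 of H\"ormander's \cite{Lars_book}, which gives the decomposition directly, and moreover with $J=\emptyset$ (your pure $\xi_j^2$ summands can always be turned into $x_j^2$ by a further block Fourier rotation, which is real symplectic). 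Likewise, the sign bookkeeping you carry through via a continuity argument is unnecessary here: Definition~\ref{def_comput} allows the constant factor $e^{\gamma}$ with $\gamma\in\mathbb{C}$, so the $\pm$ ambiguity coming from $\mathscr{K}$ is absorbed automatically, which is exactly how Lemma~\ref{calc_meta} deals with it.
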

\begin{proof}
Applying Theorem 21.5.3 of \cite{Lars_book}, we get a symplectic change of coordinates $T\in \mathrm{Sp}_{2n}(\mathbb{R})$ such that there exists $m\in \llbracket 1,n \rrbracket$ and $0<\lambda_1\leq \dots \leq \lambda_m$ some real numbers such that
$$
q\circ T = \sum_{j=1}^m \lambda_j(x_j^2 + \xi_j^2) + \sum_{j=m+1}^n x_j^2.
$$
Consequently, since $T$ is symplectic, applying Noether theorem, we have
$$
 \Phi^q_1 = T \Phi^{q\circ T}_1 T^{-1} = T \left(\prod_{j=1}^m \Phi^{x_j^2 + \xi_j^2}_{\lambda_j} \prod_{j=m+1}^n \Phi^{x_j^2}_1 \right) T^{-1}.
$$
Applying Theorem \ref{merci_lars}, we get, at the level of the Fourier Integral Operators,
$$
\pm e^{-tq^w}= \mathscr{K}(T) \left( \prod_{j=1}^m  e^{-\lambda_j (x_j^2-\partial_{x_j}^2) }   \prod_{j=m+1}^n e^{-x_j^2} \right)   \mathscr{K}(T^{-1}).
$$
Recalling that, as a consequence of the formula \eqref{split_exact_harm_osci}, the semigroups $e^{\lambda_j (x_j^2-\partial_{x_j}^2) }$ can be computed by exact splittings, we deduce of Lemma \ref{calc_meta} that the same applies for $e^{-tq^w}$. 
\end{proof}

Now, we can prove Theorem  \ref{thm_universal}.
\begin{proof}[Proof of Theorem  \ref{thm_universal}] Let $p$ be a polynomial of degree $2$ or less on $\mathbb{C}^{2n}$ whose real part is bounded below on $\mathbb{R}^{2n}$. We aim at proving that $e^{-p^w}$ can be computed by an exact splitting in the sense of Definition \ref{def_comput}. 

Applying Corollary \ref{cor_proof_proj_pos}, we get a constant $c\in \mathbb{R}$ such that $\mathbb{P}(p-c)\geq 0$. Then applying Theorem 2.1 and Lemma 3.10 of \cite{AB}, we get $t_0>0$ and two real quadratic forms $a_t,b_t:\mathbb{R}^{2n+2} \to \mathbb{R}$ depending analytically on $t\in (-t_0,t_0)$, $a_t$ being non-negative, and such that if $|t| < t_0$ then
$$
\Phi_t^{a_t} \Phi_t^{i b_t} = \Phi_t^{\mathbb{P}(p-c)}. 
$$
Furthermore, it follows of the Baker-Campbell-Hausdorff formulas and formulas of (3.22) and (3.41) of \cite{AB} defining $a_t,b_t$ that these quadratic forms belong to the complex Lie algebra generated by $\mathbb{P}(p-c)$ and $\mathbb{P}(\overline{p}-c)$. Observing that for all polynomials $p_1,p_2$ of degree $2$ or less on $\mathbb{C}^{2n}$ we have
$$
\{ \mathbb{P}p_1,\mathbb{P}p_2 \} = \mathbb{P}\{p_1,p_2\},
$$
the image of $\mathbb{P}$ is a Lie algebra. Consequently, $a_t,b_t$ belong to the image of $\mathbb{P}$, i.e. there exist two real polynomials of degree $2$ or less on $\mathbb{R}^{2n}$ depending analytically on $t$ and denoted $p_{t}^{(r)}$ and $p_{t}^{(i)}$ such that
$$
a_t = \mathbb{P}p_{t}^{(r)} \ \mathrm{and} \ b_t = \mathbb{P}p_{t}^{(i)}.
$$
Note that since $p_{t}^{(r)}$ is the restriction of $a_t $ on the affine subspace $\{(x_{n+1},\xi_{n+1})=(1,0)\}$, it is also non-negative.

Now applying Proposition \ref{prop_key}, we deduce that if $0\leq t<t_0$ we have
\begin{equation}
\label{affi_polar}
e^{-t p^w} = e^{-tc} e^{-t (p_{t}^{(r)})^w} e^{-it (p_{t}^{(i)})^w}.
\end{equation}
Let $t_\star \in (0,t_0)$ be such that there exists $n\in \mathbb{N}$ satisfying $t_\star=n^{-1}$. Since $(e^{-t p^w})_{t\geq 0}$ is a semigroup, we have
$$
(e^{-t_{\star} p^w})^n =  e^{-nt_{\star} p^w} = e^{-p^w}.
$$
Consequently, if $e^{-t_{\star} p^w}$ can be computed by an exact splitting then the same applies for $e^{-p^w}$. Furthermore, from the factorization \eqref{affi_polar}, we deduce that if $e^{-t_{\star} (p_{t_{\star}}^{(r)})^w}$ and $e^{-it_{\star} (p_{t_{\star}}^{(i)})^w}$ can be computed by an exact splitting then the same applies for $e^{-t_{\star} p^w}$. Consequently, we just have to focus on these two semigroups.

On the one hand, applying Lemma \ref{lem_split_hom}, Lemma \ref{split_aff_im}, Lemma \ref{calc_meta} and Theorem \ref{merci_lars} (to justify that the semigroup generated by a quadratic differential operator is a Fourier Integral Operator), we deduce that $\exp(-it_{\star} (p_{t_\star}^{(i)})^w)$ can be computed by an exact splitting.

On the other hand, applying Lemma \ref{lem_translat}, Lemma \ref{split_aff_im} and Lemma \ref{lem_pos_quad_comp}, we deduce  that $\exp(-t_{\star} (p_{t_{\star}}^{(r)})^w)$ can be computed by an exact splitting.
\end{proof}

\subsection{Proof of Lemma \ref{lem_pos}}
\label{sub_proof_lem_pos} We aim at proving that if $\Re q\geq 0$ on $\mathbb{R}^{2n}$ then $\Phi_t^q\equiv e^{-2itJ_{2n}Q} \in \mathrm{Sp}_{2n}^+(\mathbb{C})$, where $Q$ is the matrix of $q$. First, we recall that from Proposition \ref{prop_elementary_ham} we know that $\Phi_t^q$ is a symplectic transformation (i.e. $\Phi_t^q \in \mathrm{Sp}_{2n}(\mathbb{C})$).

So we aim at proving that $\Phi_t^q$ is nonnegative, i.e.
\begin{equation}
\label{pos_app}
\forall X \in \mathbb{C}^{2n}, \ \transp{\overline{X}}\transp{\overline{\Phi_t^q}}(-iJ_{2n}) \Phi_t^q X - \transp{\overline{X}}(-iJ_{2n})  X \in \mathbb{R}_+.
\end{equation}
Since $\Phi_0^q=I_{2n}$, we have
\begin{multline*}
\transp{\overline{\Phi_t^q}}(-iJ_{2n}) \Phi_t^q - (iJ_{2n}) = \int_{0}^t \partial_s \left( \transp{\overline{\Phi_s^q}}(-iJ_{2n}) \Phi_s^q \right) \mathrm{d}s =  \int_{0}^t \left(e^{-2is\bar{Q}J_{2n}} (-iJ_{2n})  e^{-2isJ_{2n}Q} \right) \mathrm{d}s \\
=   \int_{0}^t \left(e^{-2is\bar{Q}J_{2n}} \left[ (-2 i \bar{Q}J_{2n}) (-iJ_{2n}) + (-iJ_{2n}) (-2 i J_{2n}Q) \right]   e^{-2isJ_{2n}Q} \right) \mathrm{d}s\\ = 4  \int_{0}^t  \transp{\overline{\Phi_s^q}} (\Re Q ) \Phi_s^q \  \mathrm{d}s.
\end{multline*}
Since $\Re Q$ is a real symmetric nonnegative matrix, $\transp{\overline{\Phi_s^q}} (\Re Q ) \Phi_s^q$ is a Hermitian nonnegative matrix and thus $\transp{\overline{\Phi_t^q}}(-iJ_{2n}) \Phi_t^q - (iJ_{2n})$ is also a Hermitian nonnegative matrix which proves \eqref{pos_app}.

\end{document}